\theoremstyle{plain}
\newtheorem{prop}{Proposition}[section]
\newtheorem{theorem}{Theorem}
\newtheorem*{theorem*}{Theorem}
\newtheorem*{question*}{Question}
\newtheorem*{lemma*}{Lemma}
\newtheorem{lemma}[prop]{Lemma}
\newtheorem{cor}[prop]{Corollary}
\newtheorem*{cor*}{Corollary}
\newtheorem{conj}[theorem]{Conjecture}
\newtheorem*{conj*}{Conjecture}
\theoremstyle{definition}
\newtheorem{defi}[prop]{Definition}
\newtheorem*{defi*}{Definition}
\newtheorem{nota}[prop]{Notation}
\newtheorem*{nota*}{Notation}
\newtheorem{rem}[prop]{Remark}
\newtheorem*{rem*}{Remark}
\newtheorem{ex}[prop]{Example}
\newtheorem{cons}[prop]{Construction}
\newcommand{\mm}{\,\middle|\,}
\newcommand{\g}{\mathfrak{g}}
\newcommand{\RR}{\mathbb{R}}
\newcommand{\ZZ}{\mathbb{Z}}
\renewcommand{\P}{\mathcal{P}}
\newcommand{\Q}{\mathcal{Q}}
\renewcommand{\L}{\mathcal{L}}
\renewcommand{\O}{\mathcal{O}}
\newcommand{\C}{\mathcal{C}}
\newcommand{\D}{\mathcal{D}}
\newcommand{\V}{\mathcal{V}}
\newcommand{\pre}{\mathrm{pre}}
\newcommand{\GT}{\mathcal{GT}}
\newcommand{\SL}{\mathrm{SL}}
\newcommand{\SO}{\mathrm{SO}}
\newcommand{\Sp}{\mathrm{Sp}}
\newcommand{\Spin}{\mathrm{Spin}}
\newcommand{\An}{\mathsf{A}_n}
\newcommand{\Bn}{\mathsf{B}_n}
\newcommand{\Cn}{\mathsf{C}_n}
\newcommand{\Dn}{\mathsf{D}_n}
\newcommand{\EE}{\mathsf{E}_6}
\newcommand{\EEE}{\mathsf{E}_7}
\newcommand{\EEEE}{\mathsf{E}_8}
\newcommand{\FF}{\mathsf{F}_4}
\newcommand{\GG}{\mathsf{G}_2}
\let\vert\relax
\DeclareMathOperator{\vert}{vert}
\DeclareMathOperator{\Lie}{Lie}
\DeclareMathOperator{\sgn}{sgn}
\newcommand{\udots}{\mathrel{\reflectbox{\ensuremath{\ddots}}}}
	\title{A Diagrammatic Approach to String Polytopes}
\author{Christian Steinert}
\address{Mathematical Institute, Faculty of Mathematics and Natural Sciences, University of Cologne; Chair for Algebra and Representation Theory, RWTH Aachen University}
\email{steinert@art.rwth-aachen.de}
\begin{document}
	
	\begin{abstract}\noindent
		We prove that for every complex classical group $G$ the string polytope associated to a special reduced decomposition and any dominant integral weight $\lambda$ will be a lattice polytope if and only if the highest weight representation of the Lie algebra of $G$ with highest weight $\lambda$ integrates to a representation of $G$ itself. This affirms an earlier conjecture and shows that every partial flag variety of a complex classical group admits a flat projective degeneration to a Gorenstein Fano toric variety.
	\end{abstract}
	
	\maketitle

\section*{Introduction}

Rational convex polytopes play a crucial role in representation theory for different reasons. First and foremost, there are many polytopes whose lattice points parameterize bases of highest weight representations. The first such polytope for $\SL_{n+1}$ was defined by Gelfand and Tsetlin in \cite{GT}. Berenstein and Zelevinsky defined analogous {\em Gelfand-Tsetlin polytopes} for all classical Lie algebras. It should be noted that the corresponding $\Sp_{2n}$-polytopes have been constructed before by Zhelobenko in \cite{Zh}. These constructions lead to the definition of {\em string polytopes} by Littelmann in \cite{L}. In contrast to the previous polytopes, these string polytopes give many different parameterizations of bases depending on the choice of a reduced decomposition of the longest word of the Weyl group. They are the main objects of our studies.

It should be noted that there exists a different {\em string polytope} by Nakashima and Zelevinsky \cite{NZ} that will not be discussed in this paper. Other famous polytopes have been defined by Lusztig in \cite{Lu}, by Feigin, Fourier and Littelmann in \cite{FFL1} and \cite{FFL2} as well by Gornitskii in \cite{G1} and \cite{G2}. The latter four polytopes are based on a conjecture by Vinberg.

Although all of these polytopes are united in the fact that their lattice polytopes give parameterizations of bases of highest weight representations, they omit different combinatorial properties. Even the most basic question whether a certain polytope is a lattice polytope, yields different answers for the aforementioned polytopes. In fact, even for string polytopes associated to {\em nice} decompositions (see \cite[Section 4]{L}), this question is highly non-trivial. 

It has been conjectured before by Alexeev and Brion that for $\SL_{n+1}$ every string polytope will be a lattice polytope independent of the reduced decomposition and the weight (see \cite[Conjecture 5.8]{AB}). Explicit calculations show that this conjecture holds for $n \leq 4$. However, in \cite[Example 5.5]{S2} we were able to provide counterexamples for $\SL_6$ and $\SL_7$ that can be extended to arbitrary rank. 

Even worse, one of the very few integrality results for string polytopes \cite[Theorem 4.5]{AB} is faulty (see \cite[Remark 5.9]{S2}). However, based on calculations we were able to state the following conjecture (see \cite[Conjecture 5.10]{S2}). For simplicity, we will say that the nice decompositions that have been constructed in \cite[Sections 5, 6 and 7]{L} are {\em standard}.

\begin{conj}\label{conj:string}
	Let $G$ be a complex classical group, let $\lambda \in \Lambda^+$ and let $\underline{w_0}^\mathrm{std}$ be the standard reduced decomposition of the longest word of the Weyl group of $G$ as stated in \cite{L}. Then $Q_{\underline{w_0}^\mathrm{std}}(\lambda)$ is a lattice polytope if and only if one of the following conditions hold.
	\begin{enumerate}
		\item $G=\SL_{n+1}$,
		\item $G=\SO_{2n+1}$ and $\langle \lambda, \alpha_n^\vee\rangle \in 2\ZZ$,
		\item $G=\Sp_{2n}$ or
		\item $G=\SO_{2n}$ and $\langle \lambda, \alpha_{n-1}^\vee\rangle + \langle \lambda, \alpha_n^\vee \rangle \in 2\ZZ$ or $n<4$.
	\end{enumerate}
\end{conj}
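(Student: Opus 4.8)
The conditions (i)--(iv) are precisely the requirement that $\lambda$ lie in the weight lattice $\Lambda_G$ of $G$ rather than merely in the weight lattice $\Lambda$ of its simply connected cover: for $\SL_{n+1}$ and $\Sp_{2n}$ this is automatic, for the adjoint group $\SO_{2n+1}$ it is equivalent to $\lambda$ lying in the root lattice, i.e.\ to $\langle\lambda,\alpha_n^\vee\rangle$ being even, and for $\SO_{2n}$ it is equivalent to $\langle\lambda,\alpha_{n-1}^\vee\rangle+\langle\lambda,\alpha_n^\vee\rangle$ being even; the small ranks $n<4$ in type $\Dn$ (where $\mathsf{D}_2\cong\mathsf{A}_1\times\mathsf{A}_1$ and $\mathsf{D}_3\cong\mathsf{A}_3$) are genuine exceptions. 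So the plan is to prove that $Q_{\underline{w_0}^{\mathrm{std}}}(\lambda)$ is a lattice polytope if and only if $\lambda\in\Lambda_G$, treating the finitely many $\Dn$, $n<4$, cases by direct computation. Because the string cone $\C_{\underline{w_0}^{\mathrm{std}}}$ and the $\lambda$-dependent upper inequalities carving out $Q_{\underline{w_0}^{\mathrm{std}}}(\lambda)$ are given by the integral diagrammatic system of inequalities developed above, it then suffices to decide for which $\lambda$ every vertex of $Q_{\underline{w_0}^{\mathrm{std}}}(\lambda)$ is integral, each vertex being the unique solution of a maximal-rank system of tight facet equations read off from a subdiagram.

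For the ``if'' direction I would argue by type. In type $\An$ the standard decomposition is Littelmann's nice decomposition \cite{L} whose string polytope is, up to a permutation of coordinates, the Gelfand--Tsetlin polytope; this is a lattice polytope for every dominant integral weight, because all of its facet inequalities are difference constraints, so the incidence matrix of any vertex-defining subdiagram is totally unimodular. In type $\Cn$ one obtains (up to coordinate change) the symplectic Gelfand--Tsetlin polytope of Zhelobenko and Berenstein--Zelevinsky, and the analysis of its vertex-defining subdiagrams is entirely analogous, yielding integrality of all vertices for every integral $\lambda$. In types $\Bn$ and $\Dn$ under the parity hypothesis I would induct on $n$ along the recursive block structure of $\underline{w_0}^{\mathrm{std}}$: restricting a vertex of $Q_{\underline{w_0}^{\mathrm{std}}}(\lambda)$ to the coordinates of the rank $(n-1)$ block yields a vertex of a lower-rank string polytope, handled by the induction hypothesis, while the few coordinates attached to the spin node(s) a priori lie only in $\tfrac12\ZZ$; the point is that the parity condition on $\lambda$ is exactly what is needed to pin them down in $\ZZ$, and I expect to make this precise by a unimodular change of variables after which the entire diagrammatic system acquires integral right-hand sides whenever $\lambda\in\Lambda_G$.

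For the ``only if'' direction it is enough, for each of $\SO_{2n+1}$ and of $\SO_{2n}$ with $n\ge 4$, to produce one dominant integral $\lambda\notin\Lambda_G$ and a vertex of $Q_{\underline{w_0}^{\mathrm{std}}}(\lambda)$ having a coordinate in $\tfrac12+\ZZ$: one takes the extremal subdiagram at the spin node, solves the associated system, extracts a half-integer entry, and checks that the $N$ facet equations used are linearly independent, so the point is a genuine vertex. The remaining $\Dn$, $n<4$, cases form a short finite computation. Putting the two directions together proves the conjecture; combined with the standard passage from a lattice string polytope to a flat projective toric degeneration of the flag variety --- together with a reflexivity check for the relevant anticanonical weight --- this also yields the Gorenstein Fano statement in the abstract.

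The main obstacle should be the inductive diagrammatic integrality argument in types $\Bn$ and $\Dn$: one has to control precisely how a vertex of $Q_{\underline{w_0}^{\mathrm{std}}}(\lambda)$ interacts with the recursion defining $\underline{w_0}^{\mathrm{std}}$, confirm that the only possible source of non-integrality is localized at the spin node --- in particular that half-integer contributions cannot accumulate across several blocks --- and match this obstruction exactly against the arithmetic of $\Lambda/\Lambda_G$. By comparison, once the diagram makes a fractional vertex visible the ``only if'' direction is routine, and the type $\Cn$ integrality, although a shade more delicate than type $\An$, follows from the same totally unimodular analysis of vertex-defining subdiagrams.
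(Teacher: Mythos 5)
Your reading of the conditions (i)--(iv) as ``$\lambda$ lies in the character lattice of $G$'' and your overall two-step strategy (integrality of every vertex of a combinatorial model when the parity condition holds; an explicit vertex with a coordinate in $\frac{1}{2}+\ZZ$ otherwise) does match the paper's strategy in types $\An$, $\Bn$ and $\Cn$: there the standard string polytope is affinely identified with a Gelfand-Tsetlin polytope, which is a marked order polytope, i.e.\ a pure difference-constraint system, and vertex integrality for integral markings is exactly \cref{cor:markedorder} (reproved here via identity diagrams, \cref{thm:markedorder}); the $\Bn$ counterexample is likewise an explicitly exhibited vertex. So for these types your plan is viable, modulo the caveat that in type $\Bn$ the lattice points of the string polytope correspond to \emph{standard} patterns (whose last column may be half-integral), so one must check mixed integrality rather than merely the presence of a half-integer entry if one argues on the Gelfand-Tsetlin side.

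The genuine gap is type $\Dn$, which is where the paper's real work lies and which your proposal treats by an unproven wish. First, Littelmann's identification of $\Q_{\underline{w_0}^\mathrm{std}}(\lambda)$ with $GT_{\Dn}(\lambda)$ is only \emph{piecewise} affine (\cref{thm:stringgtd}), so it need not preserve vertices, and $GT_{\Dn}(\lambda)$ is not a marked order polytope (the four-term minimum inequalities); moreover the string polytope's own facet system (\cref{thm:stringd}) has coefficients of absolute value $2$, so there is no totally unimodular ``difference-constraint'' structure, and integral right-hand sides after a unimodular change of variables would not by themselves force integral vertices. Second, your inductive step --- that restricting a vertex of $\Q_{\underline{w_0}^\mathrm{std}}(\lambda)$ to the rank-$(n-1)$ block yields a vertex of a lower-rank string polytope --- is asserted without justification, and it is not clear it can hold as stated since the defining inequalities couple the blocks through the long sums in \cref{thm:stringd}. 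The paper circumvents all of this by embedding the pattern into a larger ambient space (the tweaked Gelfand-Tsetlin polytope), where the identification becomes honestly affine (\cref{thm:stringtweak}), and by proving a new vertex classification (\cref{thm:tweakedvertices}) in which, besides marked components, \emph{anomalies} and \emph{impurities} occur; integrality under the parity condition then follows from $\Gamma(\lambda)$-membership of vertex coordinates (\cref{thm:gamma}), while for half-integral $\lambda$ and $n\geq 4$ the non-lattice vertices are precisely those whose diagrams contain an anomaly (forced zero coordinates), a phenomenon your framework would not detect. Finally, the clause ``$n<4$'' is not ``a short finite computation'': it is a statement about all dominant weights of $\SO_4$ and $\SO_6$, and in the paper it follows from the structural fact that anomalies cannot occur in these ranks when $\lambda_n\neq 0$ (\cref{cor:tweak}), not from checking finitely many cases.
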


This paper is dedicated to proving said conjecture. By standard results from representation theory (see for example \cite[Chapter 10, Theorem 6.1]{P} and \cite[Chapter 11, Theorem 6.6]{P}) the following formulation is equivalent.

\begin{theorem}\label{thm:main}
	Let $G$ be a complex classical group and $\lambda$ a dominant integral weight of its Lie algebra $\Lie G$. Then the standard string polytope $\Q_{\underline{w_0}^\mathrm{std}}(\lambda)$ (in the sense of \cite{L}) is a lattice polytope if and only if the $(\Lie G)$-representation on $V(\lambda)$ integrates to a representation of $G$.
\end{theorem}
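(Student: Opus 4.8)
The plan is to prove the combinatorial form of \cref{conj:string} — that $\Q_{\underline{w_0}^{\mathrm{std}}}(\lambda)$ is a lattice polytope precisely when one of (i)--(iv) holds — and to deduce \cref{thm:main} from the quoted description of the weight lattice of a classical group $G$ inside that of $\Lie G$: the spin and half-spin weights are exactly those excluded by the congruences in (ii) and (iv), while $\SL_{n+1}$ and $\Sp_{2n}$ are simply connected, matching the unconditional cases (i) and (iii). The first step is to make Littelmann's standard decompositions fully explicit in each classical type and to write down a complete inequality description of $\Q_{\underline{w_0}^{\mathrm{std}}}(\lambda)$. This is where the diagrammatic language enters: I encode a string parameter as a labelling of the arrows and vertices of the wiring diagram of $\underline{w_0}^{\mathrm{std}}$, so that the string cone inequalities become local conditions indexed by combinatorial paths and the weight $\lambda$ enters only through the boundary labels. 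Organised this way, $\Q_{\underline{w_0}^{\mathrm{std}}}(\lambda)$ becomes a transportation-type polytope on the diagram, and integrality reduces to analysing which families of inequalities can be made simultaneously tight at a vertex.

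In types $\mathsf{A}_n$ and $\mathsf{C}_n$ this analysis is clean. For $\mathsf{A}_n$ the diagram is a unimodular image of the classical Gelfand--Tsetlin pattern, whose interlacing inequalities have a totally unimodular coefficient matrix; since $\lambda$ only affects the right-hand side, every vertex is integral and we obtain (i). For $\mathsf{C}_n$ I identify $\Q_{\underline{w_0}^{\mathrm{std}}}(\lambda)$ with a unimodular image of the symplectic Gelfand--Tsetlin polytope of Zhelobenko and Berenstein--Zelevinsky; the crux is that, although the symplectic pattern interleaves rows of two different lengths, the resulting system still admits a network-flow presentation with integral right-hand side, hence has only integral vertices for every $\lambda$ — giving (iii), consistently with $\Sp_{2n}$ being simply connected.

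The substance of the theorem lies in types $\mathsf{B}_n$ and $\mathsf{D}_n$, where the diagram acquires ``doubled'' features from the short/long root asymmetry, the defining matrix is no longer totally unimodular, and genuinely half-integral vertices can occur. I treat the two implications separately. For the failure direction it suffices to produce, for one $\lambda$ violating the relevant congruence (with $n \ge 4$ in type $\mathsf{D}$), a single half-integral vertex: I do this explicitly for $\lambda = \omega_n$ in type $\mathsf{B}_n$ and for $\lambda = \omega_{n-1}$ in type $\mathsf{D}_n$ — i.e.\ for the (half-)spin fundamental weights — by exhibiting the tight subsystem realising a fractional vertex, and then note that adding any weight from the root lattice of $G$ preserves fractionality. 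For the positive direction — that the congruence forces a lattice polytope — I describe the vertices combinatorially: making a maximal compatible family of inequalities tight partitions the diagram into connected regions, on each of which the labels are an affine function of the boundary labels with coefficients in $\tfrac12\ZZ$, the only possible denominator $2$ coming from a doubled edge. The congruence on $\lambda$ is then shown, region by region, to force these half-units to cancel, via a parity count of the boundary labels incident to each region; the low-rank isomorphisms $\mathsf{D}_2 \cong \mathsf{A}_1 \times \mathsf{A}_1$ and $\mathsf{D}_3 \cong \mathsf{A}_3$ furnish the base of a rank induction compatible with Littelmann's inductive construction of $\underline{w_0}^{\mathrm{std}}$, under which projecting away the last block of string coordinates realises $\Q_{\underline{w_0}^{\mathrm{std}}}(\lambda)$ as fibred over the rank $n-1$ polytope.

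I expect the parity bookkeeping in the positive direction for $\mathsf{B}_n$ and $\mathsf{D}_n$ to be the main obstacle: one must control \emph{all} vertices simultaneously, and the combinatorics of tight subsystems is most delicate near the folded end of the diagram, where the congruence condition on $\lambda$ interacts with the doubled edges and the two implications of the statement come closest together. A secondary, more technical point is to verify that Littelmann's decompositions in \cite{L} really produce diagrams of the claimed shape and that the rank induction splices correctly — in particular that the fibres of the projection to the lower-rank polytope are themselves lattice polytopes over lattice points under the same congruence — which I expect to be laborious but routine.
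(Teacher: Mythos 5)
Your outline for types $\An$ and $\Cn$ is fine and essentially equivalent to what the paper does (the interlacing system is a marked order polytope / network-type system, so integral right-hand sides force integral vertices; compare \cref{thm:stringgtabc} together with \cref{cor:markedorder}). The genuine gaps are in types $\Bn$ and $\Dn$. For the failure direction you exhibit a fractional vertex only for $\lambda=\omega_n$ (resp.\ $\omega_{n-1}$) and then assert that ``adding any weight from the root lattice of $G$ preserves fractionality.'' No argument is given, and none is available at this level of generality: adding $\mu$ changes the right-hand sides of the $\lambda$-dependent inequalities, the previously tight subsystem need not remain tight or even feasible, and $\Q_{\underline{w_0}}(\lambda)+\Q_{\underline{w_0}}(\mu)\subseteq\Q_{\underline{w_0}}(\lambda+\mu)$ is only an inclusion (and even a Minkowski sum of a fractional polytope with a lattice polytope can be a lattice polytope). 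Indeed, the statement that non-integrality depends only on the coset of $\lambda$ modulo the root lattice is precisely the content of \cref{thm:main}, so it cannot be invoked as a transfer principle. The paper avoids this by constructing, for \emph{every} $\lambda$ violating the parity condition, an explicit vertex that is not a lattice point: in type $\Bn$ the pattern whose identity diagram is \cref{fig:identityb}, certified as a vertex by \cref{thm:markedorder}, and in type $\Dn$ the pattern of \cref{fig:dn}.

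For the positive direction in type $\Dn$ your sketch never engages the actual obstruction, which the paper spends most of its length on: Littelmann's bijection to the $\Dn$ Gelfand--Tsetlin polytope (\cref{thm:stringgtd}) is only \emph{piecewise} affine, so it need not match vertices with vertices, and the $\Dn$ Gelfand--Tsetlin polytope is not a marked order polytope because of the four-term $\min$-inequalities. Your claim that on each ``region'' of a tight subsystem the labels are affine in the boundary labels with denominators at most $2$, to be fixed by a parity count, is exactly the assertion that needs proof, and it is where the paper introduces the tweaked Gelfand--Tsetlin polytopes (\cref{thm:stringtweak}) and the classification of their vertices via anomalies and impurities (\cref{thm:tweakedvertices}, \cref{cor:tweak}); note in particular that vertices with coordinate $0$ (anomalies) occur and are precisely what destroys integrality when the $\lambda_i$ are half-integers, a phenomenon your parity bookkeeping would have to detect. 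Finally, the proposed rank induction via projecting away the last block of string coordinates does not close: a vertex of the total polytope need not project to a vertex of the lower-rank polytope, and its image need not be a lattice point, so knowing that ``fibres over lattice points are lattice polytopes'' under the same congruence does not control all vertices. As it stands the proposal establishes types $\An$ and $\Cn$, but both implications in type $\Dn$ and the only-if implication in type $\Bn$ for general $\lambda$ remain unproved.
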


It should be noted that this result has been proved before in types $\An$ and $\Cn$ since the corresponding string polytope are unimodularly equivalent to the Gelfand-Tsetlin polytope. This polytope can be realized as a {\em marked order polytope} as defined by Ardila, Bliem and Salazar in \cite{ABS}, which readily yields the claim. Alternatively, a result on marked order polytopes by Fang and Fourier could also be used \cite{FF}.

However, these results do not prove the orthogonal cases (completely). In type $\Bn$ one only gets one implication of \cref{thm:main}. In type $\Dn$ there exists no affine bijection from the string polytope to the Gelfand-Tsetlin polytope. There exists only a piecewise affine bijection (see \cite[Section 7]{L}) which need not preserve vertices. Additionally the Gelfand-Tsetlin polytope in type $\Dn$ is not realized as a marked order polytope.

Our proof will give a visual tool to classify vertices of marked order polytopes via directed graphs that we will call {\em identity diagrams}. This classification proves the claim of \cref{thm:main} in types $\An$, $\Bn$ and $\Cn$. The concrete statement is the following.

\begin{theorem}\label{thm:markedorder}
	A point in a marked order polytope is a vertex if and only if each connected component in its identity diagram contains a marked element.
\end{theorem}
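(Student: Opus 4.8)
First I would recall the definition of a marked order polytope. Given a finite poset $P$ with a distinguished subset $P^* \subseteq P$ of *marked* elements and a marking $\lambda \colon P^* \to \RR$ that is order-preserving on $P^*$, the marked order polytope $\O(P,\lambda)$ is the set of order-preserving maps $x \colon P \to \RR$ that restrict to $\lambda$ on $P^*$; that is, $x_p \le x_q$ whenever $p \le q$ in $P$, and $x_p = \lambda_p$ for $p \in P^*$. The defining inequalities are of two kinds: the covering relations $x_p \le x_q$ and the marking equalities. Given a point $x \in \O(P,\lambda)$, its *identity diagram* is the directed graph on vertex set $P$ with an edge $p \to q$ for every covering relation $p \lessdot q$ in $P$ with $x_p = x_q$ (I will treat this as an undirected graph for the purpose of connected components, since direction is irrelevant to the argument). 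A connected component of this diagram is then a set of elements of $P$ on which $x$ is forced to be constant by the tight covering inequalities, plus possibly a marked element.

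**The two directions.**
The proof is the standard vertex criterion: $x$ is a vertex of $\O(P,\lambda)$ if and only if the tight constraints at $x$ have full rank, i.e.\ their solution set is $\{x\}$. For the easy direction, suppose some connected component $C$ of the identity diagram of $x$ contains no marked element. Since $\O(P,\lambda)$ is a bounded polytope and $x$ is constant on $C$ (with a common value $c$), I would perturb: let $x^{\pm}$ agree with $x$ off $C$ and take the value $c \pm \varepsilon$ on $C$. For $\varepsilon > 0$ small enough, $x^{\pm} \in \O(P,\lambda)$, because the only covering relations that could be violated are those crossing the boundary of $C$, and those are strict at $x$ (if a covering relation $p \lessdot q$ with $p \in C$, $q \notin C$ had $x_p = x_q$ it would lie in the diagram and $q$ would be in $C$); shrinking $\varepsilon$ preserves all strict inequalities and all equalities (no marked element lies in $C$). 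Then $x = \tfrac12(x^+ + x^-)$ is not a vertex.

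**The harder direction.**
Conversely, suppose every connected component of the identity diagram of $x$ contains a marked element. I claim $x$ is a vertex, equivalently, if $x = \tfrac12(y+z)$ with $y,z \in \O(P,\lambda)$ then $y = z = x$. The key observation is that every tight covering relation $p \lessdot q$ at $x$ (i.e.\ $x_p = x_q$) is also tight at both $y$ and $z$: indeed $y_p \le y_q$, $z_p \le z_q$, and $\tfrac12(y_p+z_p) = x_p = x_q = \tfrac12(y_q+z_q)$ forces $y_p = y_q$ and $z_p = z_q$. Hence $y$ and $z$ are both constant on each connected component $C$ of the identity diagram of $x$. But $C$ contains a marked element $m \in P^*$, where $y_m = z_m = \lambda_m = x_m$; by constancy on $C$, $y$ and $z$ agree with $x$ on all of $C$. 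Since the components cover $P$, we get $y = z = x$. This shows $x$ is a vertex; I expect this direction to require only this propagation-of-tightness argument, so the true content — and the only place any care is needed — is verifying that in the easy direction the perturbation stays feasible, i.e.\ the separation between $C$ and its complement is genuinely strict, which is immediate from the definition of the identity diagram.

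**Main obstacle.**
Honestly, this statement is elementary once the right language is fixed, so there is no deep obstacle; the main point of care is purely definitional — making sure the identity diagram records exactly the covering relations, not all order relations, so that ``connected component'' really captures the subspace on which $x$ is forced constant, and handling the (trivial) case where a component consists of a single marked element with no tight edges, which is automatically rigid. I would therefore spend most of the write-up pinning down the combinatorial setup precisely, after which both implications are a few lines each.
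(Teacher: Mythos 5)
Your proof is correct and takes essentially the same approach as the paper: the paper phrases the vertex criterion as the non-existence of $v\neq 0$ with $x\pm v$ in the polytope (\cref{lemma:vertices}), while you use the equivalent convex-combination form $x=\tfrac12(y+z)\Rightarrow y=z=x$, but in both cases the argument propagates equality along tight covering relations within a connected component, anchors at a marked element for the rigidity direction, and uses the $\pm\varepsilon$ perturbation on an unmarked component for the converse.
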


This result is not new. Rather, it is a graphical version of a result by Pegel \cite[Proposition 3.2]{Pe}. But we will reprove it to give an easier understanding of our work in $\Dn$.

For $\Dn$ we will construct slight deviations of the usual Gelfand-Tsetlin polytopes. These {\em tweaked Gelfand-Tsetlin polytopes} are slightly more complicated as they are realized in a bigger ambient space, creating artificial redundancies in the coordinates. However, we will be able to show that they are better suited for our purpose than the usual Gelfand-Tsetlin polytopes.

\begin{theorem}\label{thm:stringtweak}
	Let $G =\SO_{2n}$. For every dominant integral weight of the Lie algebra of $G$, there exists an affine bijection between $\RR^{n(n-1)}$ and an affine subspace of $\RR^{n^2+n-2}$ that sends the string polytope onto the tweaked Gelfand-Tsetlin polytope.
	
	Furthermore, a point in the tweaked Gelfand-Tsetlin polytope will correspond to a lattice point in the string polytope if and only if its coordinates are either completely contained in $\ZZ$ or completely contained in $\ZZ+\frac{1}{2}$.
\end{theorem}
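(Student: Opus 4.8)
The plan is to upgrade Littelmann's piecewise affine comparison map between the string polytope and the Gelfand--Tsetlin polytope in type $\Dn$ to a genuinely affine map by enlarging the target. First I would write out $\Q_{\underline{w_0}^{\mathrm{std}}}(\lambda) \subseteq \RR^{n(n-1)}$ explicitly through the string cone inequalities and the $\lambda$-inequalities of \cite[Section 7]{L}, and recall the piecewise affine bijection $\Phi$ from that polytope onto the type-$\Dn$ Gelfand--Tsetlin polytope constructed there. The key observation is that both $\Phi$ and $\Phi^{-1}$ are given coordinatewise by affine functions, with the sole exception of the Gelfand--Tsetlin entries sitting at the fork $\{\alpha_{n-1},\alpha_n\}$ of the Dynkin diagram, where precisely one maximum (equivalently, one absolute value) of two affine functions intervenes per affected row. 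These finitely many ``folds'' are the only obstruction to $\Phi$ being affine, and I would isolate them.

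Next I would define the tweaked Gelfand--Tsetlin polytope $\widetilde{\GT}_{\Dn}(\lambda)$ by resolving each fold: for every fold introduce one auxiliary coordinate recording the value of the branch of the maximum that the ordinary Gelfand--Tsetlin entry discards, and rewrite the interlacing relations as well as the formulas defining $\Phi$ in the enlarged coordinate system; together with the retained fixed top row this is what produces the ambient space $\RR^{n^2+n-2}$. By construction the previously piecewise affine expressions are now affine, so one obtains an affine map $\Psi\colon\RR^{n(n-1)}\to\RR^{n^2+n-2}$ assembled from these formulas, whose image lies in the affine subspace $W$ cut out by the equations fixing the top row together with the equations defining the auxiliary coordinates. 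It then remains to verify that $\Psi$ restricts to a bijection $\Q_{\underline{w_0}^{\mathrm{std}}}(\lambda)\xrightarrow{\ \sim\ }\widetilde{\GT}_{\Dn}(\lambda)$: that every defining inequality of $\widetilde{\GT}_{\Dn}(\lambda)$ pulls back under $\Psi$ to a string cone inequality or a $\lambda$-inequality and conversely (so that the image is exactly the tweaked polytope), that $\Psi$ is injective because the string coordinates are recovered from $\Psi(t)$ by an affine formula, and that $\Psi$ is onto $\widetilde{\GT}_{\Dn}(\lambda)$ because the auxiliary coordinates of a point of the tweaked polytope are uniquely forced by the remaining ones.

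For the integrality statement I would track denominators. The matrix of $\Psi$ and the matrix of the affine inverse $\Psi^{-1}\colon W\to\RR^{n(n-1)}$ have integer entries except for a single $\tfrac12$ coming from the spin combination at the fork, and this $\tfrac12$ enters all the coordinates it affects in a correlated fashion; following this through shows that $t\in\RR^{n(n-1)}$ has all coordinates in $\ZZ$ if and only if $\Psi(t)$ has all coordinates in $\ZZ$ or all coordinates in $\ZZ+\tfrac12$, which is precisely the stated correspondence of lattice points. I expect the heart of the argument --- and the main obstacle --- to be the middle step: one must choose the auxiliary coordinates simultaneously redundant enough to be pinned down by the tweaked interlacing inequalities (so that $\Psi$ is surjective onto $\widetilde{\GT}_{\Dn}(\lambda)$, not merely injective) and rich enough to linearize every fold of $\Phi$, and then match Littelmann's string cone inequalities against the interlacing inequalities of the tweaked pattern one by one, a bookkeeping that is routine in spirit but delicate in the index ranges around the fork.
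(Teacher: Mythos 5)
Your plan is essentially the paper's own proof: there, too, Littelmann's piecewise affine map $\phi_\lambda$ is linearized by splitting each fold coordinate $z_{i,n-1}$ into the two branch values $z_{i,n-1}^\uparrow,z_{i,n-1}^\downarrow$ (tied together by $z_{i,n-1}^\uparrow-z_{i,n-1}^\downarrow=y_{i,n-1}-y_{i+1,n-1}$, which also forces each auxiliary coordinate from the rest, giving surjectivity), the composite $\widetilde{\phi}_\lambda=\psi_\lambda^{-1}\circ\phi_\lambda$ is shown to be affine by the explicit cancellation of the two minima, and the lattice-point dichotomy is transferred from Littelmann's description of lattice points as standard patterns. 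The only substantive remark on your sketch concerns the integrality step: the linear parts of $\widetilde{\phi}_\lambda$ and its inverse are in fact integral, with the half-integers entering solely through the $\epsilon$-coordinates $\lambda_j$ in the affine (translation) part and the marked top row, not as a $\tfrac{1}{2}$ entry of the matrix at the fork, so your denominator-tracking goes through but is even simpler than you anticipate (and is exactly why the dichotomy must be read including the first row $y_{1,j}=\lambda_j$).
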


These new polytopes will still not be marked order polytopes. But we will construct an analogue of identity diagrams that fulfill the same purpose. These {\em tweaked Gelfand-Tsetlin diagrams} give a visual tool to determine whether a given point in the tweaked Gelfand-Tsetlin polytope is a vertex although the criterion is slightly more complicated. Ultimately, this yields to the proof of the claim of \cref{thm:main} for type $\Dn$.

Together with our main result in \cite{S2}, this will imply the following.

\begin{theorem}\label{cor:reflexivestring}
	Let $G$ be a complex classical group and $\lambda$ a dominant integral weight. The standard string polytope $\Q_{\underline{w_0}^\mathrm{std}}(\lambda)$ (in the sense of \cite{L}) is a reflexive polytope after translation by a lattice vector if and only if $\lambda$ is the weight of the anticanonical line bundle over some partial flag variety $G/P$.
\end{theorem}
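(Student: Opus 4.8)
The plan is to deduce \cref{cor:reflexivestring} from \cref{thm:main} together with the main result of \cite{S2}. Recall that \cite[Conjecture 5.10]{S2} (now \cref{thm:main}) was formulated precisely because the earlier paper had already characterized, in terms of the combinatorics of the string polytope, when $\Q_{\underline{w_0}^\mathrm{std}}(\lambda)$ becomes reflexive after a lattice translation \emph{under the additional hypothesis that it is a lattice polytope}. So the first step is to recall from \cite{S2} that the standard string polytope $\Q_{\underline{w_0}^\mathrm{std}}(\lambda)$, when it is a lattice polytope, is reflexive up to lattice translation if and only if $\lambda$ equals the weight of the anticanonical bundle of the partial flag variety $G/P_\lambda$, where $P_\lambda$ is the parabolic determined by the support of $\lambda$ among the simple roots. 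The point is that a flat degeneration preserves the Hilbert polynomial, so the toric variety $X_{\Q}$ degenerated from $G/P_\lambda$ (polarized by $\L_\lambda$) is Gorenstein Fano precisely when $G/P_\lambda$ is, and the latter happens exactly for the anticanonical weight; the reflexivity of the polytope is the toric translation of this.

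The second step handles the logical structure of the "if and only if". For the forward direction: if $\Q_{\underline{w_0}^\mathrm{std}}(\lambda)$ is reflexive after a lattice translation, then in particular it is a lattice polytope, so by \cref{thm:main} the representation $V(\lambda)$ integrates to $G$; hence the cited characterization from \cite{S2} applies and forces $\lambda$ to be the anticanonical weight of $G/P_\lambda$. For the converse: suppose $\lambda$ is the weight of the anticanonical line bundle over some partial flag variety $G/P$. One must first check that this weight does satisfy the integrality condition of \cref{conj:string}/\cref{thm:main} — i.e.\ that the anticanonical weight is always a weight of $G$, not merely of $\Lie G$ — which is clear since the anticanonical bundle on $G/P$ is $G$-linearized, so $V(\lambda)^*$ (equivalently $V(\lambda)$) carries a genuine $G$-action. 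Therefore \cref{thm:main} guarantees $\Q_{\underline{w_0}^\mathrm{std}}(\lambda)$ is a lattice polytope, and then the \cite{S2} characterization upgrades this to reflexivity after a lattice translation.

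The only subtlety — and the step I would be most careful about — is matching conventions: the statement "$\lambda$ is the weight of the anticanonical line bundle over \emph{some} partial flag variety $G/P$" must be interpreted consistently with \cite{S2}. Concretely, the anticanonical bundle of $G/P$ is $\L_{\lambda_P}$ where $\lambda_P = \sum_{\alpha_i \notin I_P} c_i \varpi_i$ with $c_i$ the coefficient of $\alpha_i$ in the sum of positive roots not in the Levi (equivalently $\langle \lambda_P, \alpha_i^\vee\rangle$ is determined by the $\rho$-shift restricted to the unipotent radical). One should confirm that as $P$ ranges over all parabolics containing the fixed Borel, the weights $\lambda_P$ are exactly the ones flagged in \cite{S2} as yielding reflexive polytopes, and that there is no discrepancy coming from, e.g., the difference between $\lambda$ being a weight of $G$ versus the flag variety being defined over $G$ rather than its adjoint or simply connected form. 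Once this dictionary is pinned down, the proof is a two-line combination of \cref{thm:main} and \cite{S2}; no new geometric or combinatorial input beyond \cref{thm:stringtweak} and \cref{thm:markedorder} (already used to prove \cref{thm:main}) is required.
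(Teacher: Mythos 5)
Your proposal is essentially the paper's proof: both directions come from combining \cref{thm:main} (together with the observation that the anticanonical weight is a genuine weight of $G$, because the anticanonical bundle is the top exterior power of the cotangent bundle and hence $G$-linearized, so $V(\lambda_{G/P})^*\simeq H^0(G/P,\L_{\lambda_{G/P}})$ is a $G$-representation) with the reflexivity criterion from \cite{S2}, which the paper quotes as \cref{thm:dual} and applies to string polytopes via Kaveh's theorem realizing them as Newton--Okounkov bodies. Your heuristic justification of that criterion via flat degenerations preserving Gorenstein Fano-ness is not how \cite{S2} argues (and would not stand on its own, since Fano-ness is not preserved under flat degeneration in general), but since you merely cite the result, this does not affect the validity of your argument.
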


This is important because Batyrev proved in \cite{B} that reflexive polytopes are in one-to-one correspondence with Gorenstein Fano toric varieties. Additionally, Alexeev and Brion were able to construct toric degenerations of spherical varieties using string polytopes in \cite{AB}. So we will reach the following conclusion.

\begin{theorem}\label{thm:flag}
	Let $G$ be a complex classical group. Then every partial flag variety of $G$ admits a flat projective degeneration to a toric Gorenstein Fano variety.
\end{theorem}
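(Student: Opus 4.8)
The plan is to deduce the statement from \cref{cor:reflexivestring}, the toric degeneration of spherical varieties of Alexeev and Brion \cite{AB}, and Batyrev's correspondence \cite{B} between reflexive polytopes and Gorenstein Fano toric varieties. Fix a complex classical group $G$ and a parabolic subgroup $P$ containing a Borel subgroup, so that $G/P$ is a projective homogeneous --- hence spherical --- $G$-variety; recall that $G/P$ is Fano. First I would observe that the anticanonical bundle $\omega_{G/P}^{-1}$ is an ample $G$-linearised line bundle, hence of the form $\mathcal{L}(\lambda_P)$ for a dominant weight $\lambda_P$ lying in the character lattice of the maximal torus of $G$; consequently the $(\Lie G)$-module $V(\lambda_P)$ integrates to a representation of $G$.

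By \cref{thm:main} the standard string polytope $\Q_{\underline{w_0}^{\mathrm{std}}}(\lambda_P)$ is then a lattice polytope, and since $\lambda_P$ is by construction the weight of the anticanonical line bundle of the partial flag variety $G/P$, \cref{cor:reflexivestring} shows that $\Q_{\underline{w_0}^{\mathrm{std}}}(\lambda_P)$ becomes reflexive after translation by a suitable lattice vector. Such a translation leaves the normal fan unchanged and alters the polarisation only by a principal divisor, so by Batyrev's theorem \cite{B} the polarised projective toric variety $X_P$ associated with $\Q_{\underline{w_0}^{\mathrm{std}}}(\lambda_P)$ is a Gorenstein Fano variety whose defining ample line bundle is $\omega_{X_P}^{-1}$.

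Next I would invoke the construction of \cite{AB}. Applied to the spherical variety $G/P$ together with the ample $G$-linearised bundle $\omega_{G/P}^{-1}$, it yields a flat projective family over the affine line whose generic fibre is $G/P$ embedded by (a power of) its anticanonical bundle and whose special fibre is precisely the projective toric variety attached to the string polytope $\Q_{\underline{w_0}^{\mathrm{std}}}(\lambda_P)$, that is, $X_P$. Projectivity is part of the construction, and flatness holds because all fibres are cut out inside one fixed projective space with a common Hilbert polynomial, which is exactly how the string-polytope degeneration is set up. Putting this together with the previous paragraph exhibits $G/P$ as the generic fibre of a flat projective family whose special fibre is the toric Gorenstein Fano variety $X_P$, proving the theorem.

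The single point that requires genuine care is matching the two imported results: one must check that the line bundle used in \cite{AB} to embed $G/P$ and to build the degeneration is precisely $\omega_{G/P}^{-1}$, so that the weight entering the string polytope is the same $\lambda_P$ that appears in \cref{cor:reflexivestring}, and that the special fibre is literally the toric variety of that polytope rather than merely dominating or being dominated by it. Once this bookkeeping is in place, all substantive input --- integrality of the string polytope, reflexivity of its lattice translate, and the existence of the degeneration --- is supplied by \cref{thm:main}, \cref{cor:reflexivestring} and \cite{AB}, and nothing more is needed.
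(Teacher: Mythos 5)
Your proposal is correct and follows essentially the same route as the paper: invoke the Alexeev--Brion degeneration of $G/P$ to the toric variety of the standard string polytope for the anticanonical weight, apply \cref{cor:reflexivestring} (whose proof already contains your observation that the anticanonical representation integrates to $G$, via \cref{thm:main}) to get reflexivity up to lattice translation, and conclude Gorenstein Fano via Batyrev. The extra bookkeeping you flag about matching the line bundle in \cite{AB} with $\omega_{G/P}^{-1}$ is exactly the choice of $\lambda$ made in the paper's proof, so nothing further is needed.
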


This observation could lead to a potential application of Batyrev's mirror symmetry construction in \cite{B} to partial flag varieties.

The structure of this paper is as follows. Before proving \cref{thm:main}, we will recall the most important definitions regarding string and Gelfand-Tsetlin polytopes in \cref{sec:stringgt} and marked order polytopes in \cref{sec:markedorder}. We will then construct our identity diagrams in \cref{sec:identity} and prove \cref{thm:markedorder}, which yields to the proof of \cref{thm:main} for types $\An$, $\Bn$ and $\Cn$. As another potential application we will describe an algorithm that delivers all vertices of a given string polytope in \cref{sec:alg}. In \cref{sec:tweak,sec:tweakdiagram} we will construct our modified Gelfand-Tsetlin patterns in type $\Dn$ and their corresponding diagrams, which leads to the final proof of \cref{thm:main} in \cref{sec:d}. We will conclude our paper with proofs of \cref{cor:reflexivestring} and \cref{thm:flag} in \cref{sec:applications}.

\textit{Acknowledgments.} This paper is a rewritten excerpt from my PhD thesis \cite{S}. I am very grateful to my PhD advisor Peter Littelmann, whose continued support was essential in the studies leading to this work. I would also like to thank my PhD referees Ghislain Fourier and Kiumars Kaveh for their kind and helpful comments.

\section{String Polytopes and Gelfand-Tsetlin Polytopes}\label{sec:stringgt}

Let us start by recalling the definition of so called {\em (Generalized) Gelfand-Tsetlin Patterns} introduced by Berenstein and Zelevinsky in \cite{BZ1}. We will widely stick to the notation in \cite{L} although we will make slight adjustments.

\begin{nota}
	For two numbers $a,b\in\RR$ the inequality $a\geq b$ will be written graphically as
	\[ \begin{matrix}
	a&\\&b
	\end{matrix} \hspace{10pt}\text{ or } \hspace{10pt}
	\begin{matrix}
	&b\\a&
	\end{matrix}.\]
\end{nota}

We will now define Gelfand-Tsetlin patterns for all classical types.

\begin{defi}[Gelfand-Tsetlin Patterns in Type $\An$]
	Let $G=\SL_{n+1}$ and $\lambda = \sum_{i=1}^{n}\lambda_i \epsilon_i\in\Lambda^+$. A \textbf{Gelfand-Tsetlin pattern} of type $\lambda$ is a tuple $(y_{i,j})\in\RR^{\frac{n(n+1)}{2}}$, $1\leq i\leq j\leq n$, such that the coordinates fulfill the relations in \cref{fig:gta}.
	
	\begin{figure}[htb]\centering
		\caption{Inequalities of Gelfand-Tsetlin Patterns in type $\An$.}\label{fig:gta}
		\begin{tikzpicture}
		\matrix (m) [matrix of nodes,align=center,row sep={4ex,between origins},column sep={2.5em,between origins},minimum width=2em]
		{
			$\lambda_1$&&$\lambda_2$ &&$\ldots$&&$\ldots$&&$\ldots$ && $\lambda_n$&&$0$\\
			&$y_{1,1}$&&$y_{1,2}$&&$\ldots$&&$\ldots$&&$y_{1,n-1}$&&$y_{1,n}$&\\
			&&$y_{2,2}$&&$y_{2,3}$&&$\ldots$&&$y_{2,n-1}$&&$y_{2,n}$&&\\
			&&&$\ddots$&&$\ddots$&&$\udots$&&$\udots$&&&\\
			&&&&$y_{n-2,n-2}$&&$y_{n-2,n-1}$&&$y_{n-2,n}$&&&&\\
			&&&&&$y_{n-1,n-1}$&&$y_{n-1,n}$&&&&&\\
			&&&&&&$y_{n,n}$&&&&&&&\\
		};
		\end{tikzpicture}\end{figure}
\end{defi}

\begin{rem}
	Notice that in Littelmann's definition of type $\An$ Gelfand-Tsetlin patterns, the top row would be included in the tuple $(y_{i,j})$ as the initial row (i.e. $y_{0,0}=\lambda_1$, \ldots, $y_{0,n-1}=\lambda_n$, $y_{0,n} = 0$). However, for fixed $\lambda$ this does only change the embedding of the pattern and not the pattern itself. So we adapted the definition to embed our patterns in a vector space whose dimension equals the number of positive roots of the algebraic group $G$. Additionally, these entries have a different character (we will call these entries a {\em marking} later on), so we would like to treat them separately.
\end{rem}

\begin{defi}[Gelfand-Tsetlin Patterns in Types $\Bn$ and $\Cn$]
	Let $G=SO_{2n+1}$ or $\Sp_{2n}$ and $\lambda = \sum_{i=1}^{n}\lambda_i \epsilon_i\in\Lambda^+$. A \textbf{Gelfand-Tsetlin pattern} of type $\lambda$ is a pair $(\mathbf{y}, \mathbf{z})$ of tuples $\mathbf{y} = (y_{i,j}) \in\RR^{\frac{n(n-1)}{2}}$, $2\leq i \leq n$, $i\leq j \leq n$, and $\mathbf{z} = (z_{i,j}) \in\RR^{\frac{n(n+1)}{2}}$, $1\leq i \leq n$, $i\leq j\leq n$, such that the coordinates fulfill the relations in \cref{fig:gtbc}.
	\begin{figure}[htb]\centering
		\caption{Inequalities of Gelfand-Tsetlin patterns in types $\Bn$ and $\Cn$.}\label{fig:gtbc}
		\begin{tikzpicture}
		\matrix (m) [matrix of nodes,align=center,row sep={4ex,between origins},column sep={2.5em,between origins},minimum width=2em]
		{
			$\lambda_1$&&$\lambda_2$ &&$\ldots$&&$\lambda_{n-2}$&&$\lambda_{n-1}$ && $\lambda_n$&&$0$\\
			&$z_{1,1}$&&$z_{1,2}$&&$\ldots$&&$z_{1,n-2}$&&$z_{1,n-1}$&&$z_{1,n}$&\\
			&&$y_{2,2}$&&$y_{2,3}$&&$\ldots$&&$y_{2,n-1}$&&$y_{2,n}$&&$0$\\
			&&&$z_{2,2}$&&$z_{2,3}$&&$\ldots$&&$z_{2,n-1}$&&$z_{2,n}$&\\
			&&&&$y_{3,3}$&&$y_{3,4}$&&$\ldots$&&$y_{3,n}$&&$0$\\
			&&&&&$z_{3,3}$&&$z_{3,4}$&&$\ldots$&&$z_{3,n}$&\\
			&&&&&&$\ddots$&&$\ddots$&&$\ddots$&&&\\
			&&&&&&&$z_{n-2,n-2}$&&$z_{n-2,n-1}$&&$z_{n-2,n}$&\\
			&&&&&&&&$y_{n-1,n-1}$&&$y_{n-1,n}$&&$0$\\
			&&&&&&&&&$z_{n-1,n-1}$&&$z_{n-1,n}$&\\
			&&&&&&&&&&$y_{n,n}$&&$0$\\
			&&&&&&&&&&&$z_{n.n}$&\\
			&&&&&&&&&&&&$0$\\
		};
		\end{tikzpicture}\end{figure}
\end{defi}

\begin{rem}
	Notice that in a Gelfand-Tsetlin pattern $(\mathbf{y},\mathbf{z})$ of type $\Bn$ or $\Cn$ the first row as well as the zeroes in the last column are not actually part of the tuple $(\mathbf{y},\mathbf{z})$. In Littelmann's definition, the first row would be included as $y_{1,1} = \lambda_1$, \ldots, $y_{1,n} = \lambda_n$. The reasons for our change of definition are the same as in type $\An$. Otherwise we want to stick with his notation, which yields to the awkward fact that our tuple $(\mathbf{y})$ starts with the index $i = 2$. However, we will not need these indices explicitly, so this should not become a problem. 
\end{rem}

\begin{defi}[Gelfand-Tsetlin Patterns in Types $\Dn$]
	Let $G=\SO_{2n}$ and $\lambda = \sum_{i=1}^{n}\lambda_i \epsilon_i\in\Lambda^+$. A \textbf{Gelfand-Tsetlin pattern} of type $\lambda$ is a pair $(\mathbf{y}, \mathbf{z})$ of tuples $\mathbf{y} = (y_{i,j}) \in\RR^{\frac{n(n-1)}{2}}$, $2\leq i \leq n$, $i\leq j\leq n$, and $\mathbf{z} = (z_{i,j}) \in\RR^{\frac{n(n-1)}{2}}$, $1\leq i \leq n-1$, $i\leq j\leq n-1$, such that 
	\begin{align*}\label{eq:dn}
	z_{1,n-1} &\leq \lambda_n + y_{2,n} + \min\{ \lambda_{n-1}, y_{2,n-1}\},\\
	z_{i,n-1} &\leq y_{i,n} + y_{i+1,n} + \min\{ y_{i,n-1}, y_{i+1,n-1}\} \text{ for all } 2\leq i\leq n-2,\\z_{n-1,n-1} &\leq y_{n-1,n} + y_{n,n} + y_{n-1,n-1},
	\end{align*}
	and the coordinates fulfill the relations in \cref{fig:gtd}.
	\begin{figure}[htb]\centering
		\caption{Inequalities of Gelfand-Tsetlin patterns in type $\Dn$.}\label{fig:gtd}
		\begin{tikzpicture}
		\matrix (m) [matrix of nodes,align=center,row sep={4ex,between origins},column sep={2.5em,between origins},minimum width=2em]
		{
			$\lambda_1$&&$\lambda_2$ &&$\ldots$&&$\lambda_{n-3}$&&$\lambda_{n-2}$ && $\lambda_{n-1}$&&$\lambda_n$\\
			&$z_{1,1}$&&$z_{1,2}$&&$\ldots$&&$z_{1,n-3}$&&$z_{1,n-2}$&&$z_{1,n-1}$&\\
			&&$y_{2,2}$&&$y_{2,3}$&&$\ldots$&&$y_{2,n-2}$&&$y_{2,n-1}$&&$y_{2,n}$\\
			&&&$z_{2,2}$&&$z_{2,3}$&&$\ldots$&&$z_{2,n-2}$&&$z_{2,n-1}$&\\
			&&&&$y_{3,3}$&&$y_{3,4}$&&$\ldots$&&$y_{3,n-1}$&&$y_{3,n}$\\
			&&&&&$z_{3,3}$&&$z_{3,4}$&&$\ldots$&&$z_{3,n-1}$&\\
			&&&&&&$\ddots$&&$\ddots$&&$\ddots$&&&\\
			&&&&&&&$z_{n-3,n-3}$&&$z_{n-3,n-2}$&&$z_{n-3,n-1}$&\\
			&&&&&&&&$y_{n-2,n-2}$&&$y_{n-2,n-1}$&&$y_{n-1,n}$\\
			&&&&&&&&&$z_{n-2,n-2}$&&$z_{n-2,n-1}$&\\
			&&&&&&&&&&$y_{n-1,n-1}$&&$y_{n,n}$\\
			&&&&&&&&&&&$z_{n-1,n-1}$&\\
			&&&&&&&&&&&&$y_{n,n}$\\
		};
		\end{tikzpicture}\end{figure}
\end{defi}

\begin{rem}
	As before we deviate from Littelmann's notation by not including the initial row containing the $\lambda_j$ in our notion of a Gelfand-Tsetlin pattern. For unifying notation of the additional inequalities, it is understood that we mean $\lambda_j$ if we write $y_{1,j}$.
\end{rem}

One can see quite easily that the set of all Gelfand-Tsetlin patterns for a given weight is a polytope.

\begin{defi}
	Let $G$ be a complex classical group of type $\mathsf{X}_n$ and let $\lambda\in\Lambda^+$. Let $N$ denote the number of positive roots of $G$. The set of all possible Gelfand-Tsetlin patterns of type $\lambda$ is called the \textbf{Gelfand-Tsetlin polytope} $GT_{\mathsf{X}_n}(\lambda)\subseteq\RR^N$ of $\mathsf{X}_n$ and $\lambda$. We will sometimes omit the subscript $\mathsf{X}_n$ if this is clear from the context.
\end{defi}

Sometimes we might be interested to use the original definitions instead. So we introduce the following notation.

\begin{nota}
	An extended Gelfand-Tsetlin pattern $\hat x$ is a Gelfand-Tsetlin pattern $x$ together with the $\lambda_i$ in its top row.
\end{nota}

\iffalse
\begin{defi}
	Let $G$ be of type $\mathsf{X}_n$ and $\lambda = \sum_{i=1}^n \lambda_i \epsilon_i\in\Lambda^+$. Let $N$ denote the number of positive roots of $G$. Let $x\in GT_{\mathsf{X}_n} (\lambda)\subseteq\RR^n$. The \textbf{extended Gelfand-Tsetlin pattern} $\hat{x}$ of $x$ is defined as
	\[ \hat{x} = \begin{cases}
	\{(\lambda_1, \ldots, \lambda_n, 0)\} \times x \in \RR^{n+1}\times\RR^N \text{ if } \mathsf{X}_n = \An\\
	\{(\lambda_1, \ldots, \lambda_n)\} \times x \in \RR^{n}\times\RR^N \text{ else.} \\	
	\end{cases} \]
\end{defi}
\fi

We will now recall the connection between Gelfand-Tsetlin polytopes and standard string polytopes. For that purpose we will define a non-standard terminology.

\begin{defi}
	Let $G$ be a complex classical group. A Gelfand-Tsetlin pattern $x$ is called \textbf{standard} if one of the following conditions hold.
	\begin{enumerate}
		\item $G=\SL_{n+1}$ and all coordinates of $\hat x$ are integral.
		\item $G=\SO_{2n+1}$, the $z_{1,n}, \ldots, z_{n,n}$ are in $\frac{1}{2}\ZZ$ and the other coordinates of $\hat x$ are either all integral or all are in $\frac{1}{2}+\ZZ$.
		\item $G=\Sp_{2n}$ and all coordinates of $\hat x$ are integral.
		\item $G=\SO_{2n}$ and the coordinates of $\hat x$ are either all integral or all are in $\frac{1}{2}+\ZZ$.
	\end{enumerate}
\end{defi}

The following is a combination of \cite[Corollary 5 and Corollary 7]{L}.

\begin{theorem}[Littelmann]\label{thm:stringgtabc}
	Let $G$ be a complex classical group of type $\mathsf{X}_n\neq\Dn$ and let $N$ denote the number of positive roots. For each dominant integral weight $\lambda = \sum_{i=1}^n\lambda_i\epsilon_i$ there exists an affine bijection $\phi_\lambda\colon \RR^N \to \RR^N$ such that $\phi_\lambda(\Q_{\underline{w_0}^\mathrm{std}}(\lambda)) = GT_{\mathsf{X}_n}(\lambda)$.
	
	Furthermore, $\phi_\lambda$ induces a bijection between the lattice points in $\Q_{\underline{w_0}^\mathrm{std}}(\lambda)$ and the standard $\mathsf{X}_n$-Gelfand-Tsetlin patterns of type $\lambda$.
\end{theorem}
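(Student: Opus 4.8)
The plan is to make Littelmann's construction explicit and to write down $\phi_\lambda$ by hand, type by type. Recall that, by the definition of the string polytope, the lattice points of $\Q_{\underline{w_0}^\mathrm{std}}(\lambda)$ are the string data $\mathbf a=(a_1,\dots,a_N)\in\NN^N$ of the elements of the crystal $B(\lambda)$ with respect to the reduced word $\underline{w_0}^\mathrm{std}$, and that $\Q_{\underline{w_0}^\mathrm{std}}(\lambda)$ is the polytope cut out of $\RR^N$ by the string cone of $\underline{w_0}^\mathrm{std}$ together with the affine inequalities that record the weight $\lambda$. For the standard decompositions these are precisely the explicit ``nice'' inequalities of \cite[Sections 5--7]{L}. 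So the task is to produce an affine automorphism $\phi_\lambda$ of $\RR^N$ carrying this system onto the Gelfand--Tsetlin inequalities of \cref{fig:gta,fig:gtbc}, and then to compute the image of $\ZZ^N$ under it.

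In type $\An$ I would use the reduced word of \cite[Section 5]{L} and define $\phi_\lambda$ by the classical recipe converting a string datum into a Gelfand--Tsetlin pattern: each entry $y_{i,j}$ is a boundary value (one of the $\lambda_k$) minus the partial sum of those $a_\ell$ that lie in the corresponding hook. If the coordinates are ordered compatibly with the word, the linear part of $\phi_\lambda$ is unitriangular, so $\phi_\lambda$ is an affine bijection and both $\phi_\lambda$ and $\phi_\lambda^{-1}$ have integer coefficients; a direct substitution turns Littelmann's string inequalities into the chain encoded by \cref{fig:gta}. Since $\phi_\lambda$ is unimodular with integral shift vector, $\phi_\lambda(\ZZ^N)=\ZZ^N$, so the lattice points of the string polytope correspond exactly to the integral Gelfand--Tsetlin patterns, which is the definition of ``standard'' in type $\An$.

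Types $\Bn$ and $\Cn$ are treated the same way, now with the decompositions of \cite[Sections 6 and 7]{L}, whose combinatorics produces the alternating $\mathbf z$- and $\mathbf y$-rows of \cref{fig:gtbc}; again $\phi_\lambda$ writes each pattern entry as a boundary value (or a previously defined entry) minus a partial sum of the $a_\ell$, its linear part is unitriangular, and Littelmann's nice inequalities go over to the displayed ones. The only genuine subtlety is the lattice statement in type $\Bn$: because $\alpha_n$ is the short simple root, the reflection $s_n$ scales the relevant weight coordinate by $2$, so the entries of $\phi_\lambda^{-1}$ that recover the string coordinates sitting at the $\alpha_n$-steps of the word carry a denominator $2$. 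Equivalently, $\phi_\lambda(\ZZ^N)$ consists exactly of those tuples whose entries $z_{1,n},\dots,z_{n,n}$ lie in $\tfrac12\ZZ$ while the remaining entries are jointly in $\ZZ$ or jointly in $\tfrac12+\ZZ$ --- precisely a standard pattern in type $\Bn$. In type $\Cn$ the relevant coroot pairings are integral, $\phi_\lambda$ is unimodular as in type $\An$, and standard simply means integral.

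The step I expect to be the main obstacle is purely bookkeeping: fixing Littelmann's exact reduced words and the induced ordering of the $N$ coordinates so that the linear part of $\phi_\lambda$ is genuinely (uni)triangular, and then checking that the string inequalities are sent to the Gelfand--Tsetlin ones \emph{on the nose}, not merely to an affinely equivalent system. A helpful consistency check throughout is that the two polytopes must contain equally many lattice points, namely $\dim V(\lambda)$; this also constrains, and ultimately confirms, the half-integrality bookkeeping in type $\Bn$. Once $\phi_\lambda$ is written out, bijectivity and the lattice-point statement follow at once from (uni)triangularity and from tracking the single factor of $2$ introduced by the short root.
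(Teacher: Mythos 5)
The paper does not prove this theorem; it is quoted from Littelmann, as a combination of \cite[Corollary~5]{L} and \cite[Corollary~7]{L}, so there is no internal proof to compare against. Your outline essentially retraces Littelmann's construction rather than offering a different one: fix the standard reduced word per type, build each Gelfand--Tsetlin entry as a boundary value minus a partial sum of string coordinates, observe (uni)triangularity of the linear part, and push the string inequalities through to the Gelfand--Tsetlin ones. The step you flag as ``purely bookkeeping'' is precisely the content of the two cited corollaries; at the level of detail given, the proposal names what needs to be checked without carrying out any of the type-by-type verification, so it is a plan for reproving Littelmann's result rather than a proof.

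There is also an internal inconsistency in the type-$\Bn$ paragraph. You assert that the entries of $\phi_\lambda^{-1}$ recovering the $\alpha_n$-step string coordinates carry a denominator $2$, and then say ``equivalently'' that $\phi_\lambda(\ZZ^N)$ has the $z_{i,n}$ lying in $\tfrac12\ZZ$. These two claims pull in opposite directions: if the $\tfrac12$ sat in $\phi_\lambda^{-1}$, then $\phi_\lambda$ would carry a factor $2$ and $\phi_\lambda(\ZZ^N)$ would be a proper sublattice, with the $z_{i,n}$ remaining integral, contradicting the definition of a standard $\Bn$-pattern. For the theorem's characterization to hold, the factor $\tfrac12$ must appear in $\phi_\lambda$ itself (so that integral string data map to last-column entries in $\tfrac12\ZZ$), and the factor $2$ in $\phi_\lambda^{-1}$. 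Your stated conclusion about $\phi_\lambda(\ZZ^N)$ is the correct one, but the justification has the direction reversed. A smaller point: ``jointly in $\ZZ$ or jointly in $\tfrac12+\ZZ$'' is not a single lattice coset; for a fixed $\lambda$ exactly one alternative occurs, determined by the parity of $\langle\lambda,\alpha_n^\vee\rangle$, so $\phi_\lambda(\ZZ^N)$ is a single coset and the disjunction runs over choices of $\lambda$, not over points.
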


\begin{rem}
	Interestingly, in type $\An$ Cho, Kim, Lee and Park gave a combinatorial classification of all reduced decompositions whose string polytope is unimodularly equivalent to the Gelfand-Tsetlin polytope in \cite{CKLP}.
\end{rem}

In type $\Dn$ the situation is more delicate as can be seen in \cite[Corollary 9]{L}. In this case the map $\phi_\lambda$ will only be piecewise affine.

\begin{theorem}[Littelmann]\label{thm:stringgtd}
	Let $G=\SO_{2n}$ and let $N$ denote the number of positive roots. For each dominant integral weight $\lambda = \sum_{i=1}^n\lambda_i\epsilon_i$ there exists a piecewise affine bijection $\phi_\lambda\colon \RR^N \to \RR^N$ such that $\phi_\lambda(\Q_{\underline{w_0}^\mathrm{std}}(\lambda)) = GT_{\mathsf{D}_n}(\lambda)$.
	
	Furthermore, $\phi_\lambda$ induces a bijection between the lattice points in $\Q_{\underline{w_0}^\mathrm{std}}(\lambda)$ and the standard $\Dn$-Gelfand-Tsetlin patterns of type $\lambda$.
\end{theorem}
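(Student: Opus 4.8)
The final statement to prove is Theorem~\ref{thm:stringgtd}, which asserts the existence of a piecewise affine bijection $\phi_\lambda$ carrying $\Q_{\underline{w_0}^\mathrm{std}}(\lambda)$ onto $GT_{\Dn}(\lambda)$, restricting to a bijection on lattice points of the string polytope and standard $\Dn$-Gelfand–Tsetlin patterns. This is attributed to Littelmann, so the plan is essentially to recall and assemble the relevant content of \cite[Corollary 9]{L}, adapting it to the shifted indexing conventions fixed earlier in \cref{sec:stringgt}.

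The approach I would take has three stages. First, I would fix the standard reduced decomposition $\underline{w_0}^\mathrm{std}$ of type $\Dn$ exactly as in \cite[Section 7]{L} and write down explicitly the string parametrization: each string coordinate records the length of a step in a path through the crystal $B(\lambda)$, and Littelmann's iterative construction expresses these lengths as alternating sums/differences of the $y_{i,j}, z_{i,j}$ entries. Second, I would invert this relationship. In types $\An,\Bn,\Cn$ (our \cref{thm:stringgtabc}) this inversion is globally affine; in type $\Dn$ the complication is that one of Littelmann's defining operations involves a $\min$ (visible already in the inequalities $z_{i,n-1}\le y_{i,n}+y_{i+1,n}+\min\{y_{i,n-1},y_{i+1,n-1}\}$ in the definition of $\Dn$ patterns), so the change of coordinates splits into affine pieces according to which argument of each $\min$ is smaller. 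I would record on each such chamber the affine map $\phi_\lambda$ and check the pieces agree on overlaps, so that $\phi_\lambda$ is a well-defined piecewise affine homeomorphism $\RR^N\to\RR^N$ with $\phi_\lambda(\Q_{\underline{w_0}^\mathrm{std}}(\lambda)) = GT_{\Dn}(\lambda)$. Third, for the lattice-point statement I would trace through which half-integrality patterns on the $y,z$ entries correspond to integral string coordinates: because the inverse change of coordinates on each chamber has an explicit matrix with $\pm 1$ entries (it is ``triangular'' in the natural order on the pattern), a string point is a lattice point precisely when the associated extended pattern is what we have called \emph{standard} in type $\Dn$, i.e. all entries in $\ZZ$ or all in $\tfrac12+\ZZ$.

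Concretely, the key steps in order are: (1) recall $\underline{w_0}^\mathrm{std}$ and the string function in type $\Dn$ from \cite{L}; (2) produce the chamber decomposition of $\RR^N$ coming from the $\min$-operations and the piecewise-affine formula for $\phi_\lambda$ on each chamber; (3) verify compatibility of the pieces on shared walls, so $\phi_\lambda$ is a genuine piecewise affine bijection; (4) verify $\phi_\lambda$ sends the string inequalities to the $\Dn$-Gelfand–Tsetlin inequalities of \cref{fig:gtd} together with the three $\min$-inequalities; (5) analyze the integrality structure of the piecewise linear transition matrices to identify lattice points with standard patterns. The main obstacle is step (3) together with the bookkeeping in (2): one must be careful that the $\min$-operations in Littelmann's recursion are nested in such a way that the resulting chambers are exactly cut out by comparing pairs $y_{i,n-1}$ versus $y_{i+1,n-1}$, and that the affine formulas glue — this is precisely the point where type $\Dn$ departs from the clean affine picture of \cref{thm:stringgtabc}, and it is why $\phi_\lambda$ need not preserve vertices, a subtlety the rest of the paper is devoted to circumventing via the tweaked patterns of \cref{thm:stringtweak}.
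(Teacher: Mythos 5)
Your proposal takes essentially the same route as the paper: the paper gives no independent proof of this statement, presenting it as a combination of Littelmann's results \cite[Theorem 7.1, Corollary 8/9]{L}, and your outline---the string parametrization for $\underline{w_0}^\mathrm{std}$, inversion into affine pieces governed by the $\min$-comparisons (in string coordinates $a_{i,n-2}-\overline{a_{i,n-1}}$ versus $a_{i,n-1}-\overline{a_{i,n-2}}$, i.e.\ $y_{i,n-1}$ versus $y_{i+1,n-1}$), and the identification of lattice points with \emph{standard} patterns---is exactly the content of Littelmann's construction, which the paper later records explicitly as the adapted piecewise affine formulas in \cref{sec:tweak}. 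Your steps (2)--(5) are stated as a plan rather than carried out, but since the theorem is attributed to Littelmann this matches the level of detail the paper itself provides.
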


The non-affineness will lead to difficulties in proving \cref{thm:main}, which we will overcome by defining new Gelfand-Tsetlin polytopes for type $\Dn$ in \cref{sec:tweakdiagram}.

\section{Marked Order Polytopes}\label{sec:markedorder}

We will now introduce generalized versions of Stanley's order polytopes. The definition of the order polytope associated to a poset is due to Stanley \cite{Sta2}. A {\em marking} on the poset lead to a generalization by Ardila, Bliem and Salazar in \cite{ABS}. These polytopes have been studied by Pegel \cite{Pe}, Fang and Fourier \cite{FF} and others.

\begin{defi}\label{defi:markedorder}
	Let $(P,\leq)$ be a finite poset, i.e. $P$ is a finite set with a partial order $\leq$ on $P$.
	\begin{enumerate}
		\item The \textbf{Hasse diagram} of $P$ is a directed graph whose set of nodes is $P$ and there is an arrow $p\to q$ whenever $p< q$ and there exists no $r$ with $p< r < q$.
		\item A \textbf{marking} on $P$ is a pair $(A, \lambda)$ where $A$ is a subset of $P$ containing all minimal and maximal elements of $P$ and $\lambda = (\lambda_a)_{a\in A}\in\RR^A$ is a real vector such that $\lambda_a\leq\lambda_b$ whenever $a\leq b$. The triplet $(P, A, \lambda)$ is called a \textbf{marked poset}. We will call the elements of $A$ \textbf{marked elements}.
		\item Let $(A,\lambda)$ be a marking on $P$. The \textbf{marked order polytope} $\O_{P,A}(\lambda)$ associated to $(P, A, \lambda)$ is defined as
		\[ \O_{P,A}(\lambda) := \left\{ x\in\RR^{P\setminus A} \,\,\middle|\,\, \begin{tabular}{l}
		$x_p \leq x_q \text{ for all } p\leq q,$\\
		$\lambda_a \leq x_p \text{ for all }a\leq p,$\\
		$x_p \leq \lambda_b \text{ for all } p\leq b$
		\end{tabular} \right\}.\]
		
	\end{enumerate}
\end{defi}

\begin{rem}
	The Gelfand-Tsetlin polytopes of types $\An$, $\Bn$ and $\Cn$ are marked order polytopes where the marking is given by the dominant integral weight and some zeroes. The Gelfand-Tsetlin polytopes of type $\Dn$ however are not marked order polytopes because of the additional four-term inequalities in their definition.
\end{rem}

The following theorem is due to Pegel, expanding results by Ardila, Bliem and Salazar \cite[Lemma 3.5]{ABS} as well as Fang and Fourier \cite[Corollary 2.2]{FF}. It can be found in \cite[Proposition 3.2]{Pe}.

\begin{theorem}[Pegel]\label{cor:markedorder}
	Let $(P, A, \lambda)$ be any marked poset. The coordinates of every vertex of the marked order polytope $\O_{P,A}(\lambda)$ must lie in the set $\{\lambda_a\mid a\in A \}$. Especially, $\O_{P,A}(\lambda)$ is a lattice polytope if $\lambda$ is integral.
\end{theorem}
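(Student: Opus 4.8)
The plan is to prove that every vertex $v$ of $\O_{P,A}(\lambda)$ has all coordinates in $\{\lambda_a \mid a \in A\}$ by a standard ``connected-component'' argument on the comparability structure of $P$. First I would recall that a point $v \in \O_{P,A}(\lambda)$ is a vertex precisely when the set of inequalities from \cref{defi:markedorder} that are \emph{tight} at $v$ has full rank $|P \setminus A|$, equivalently when the only vector $t \in \RR^{P \setminus A}$ satisfying all the tight inequalities with equality replaced appropriately (i.e. lying in the lineality space of the tight cone) is $t = 0$. Concretely, for $p, q \in P \setminus A$ with $p \lessdot q$ (a covering relation) and $v_p = v_q$, we get the equation $t_p = t_q$; for $p \in P \setminus A$ and $b \in A$ with $p \lessdot b$ and $v_p = \lambda_b$, we get $t_p = 0$ (and symmetrically for $a \lessdot p$ with $v_p = \lambda_a$).

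Next I would set up the \textbf{identity diagram} (or rather its combinatorial shadow): build a graph $\Gamma_v$ on vertex set $P$ whose edges are exactly the covering pairs $\{p,q\}$ of the Hasse diagram along which $v$ (extended by $v_a := \lambda_a$ for $a \in A$) takes equal values. The key observation is that if $p$ and $q$ lie in the same connected component of $\Gamma_v$, then $t_p = t_q$ is forced (by chaining the tight equations along a path), and if that component contains some marked element $a \in A$, then $t_p = t_a = 0$ is forced for every $p$ in the component. So: if \emph{every} connected component of $\Gamma_v$ meets $A$, then $t \equiv 0$ and $v$ is a vertex; this is the ``hard'' direction that is actually needed, although for the present statement I only need the contrapositive-free half, namely that for a vertex $v$ every component must meet $A$ — because a component disjoint from $A$ would give a nonzero feasible perturbation direction $t$ (supported on that component, all equal to $1$ there, $0$ elsewhere), contradicting vertexhood. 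Once every component meets $A$, pick for each $p$ a path in $\Gamma_v$ to a marked element $a$; along this path all the extended values are equal, so $v_p = \lambda_a \in \{\lambda_a \mid a \in A\}$, which is exactly the claim. Integrality of $\O_{P,A}(\lambda)$ for integral $\lambda$ is then immediate since all vertex coordinates lie in $\ZZ$.

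I would be slightly careful about one subtlety: the perturbation direction $t$ used to contradict vertexhood of $v$ must actually be feasible, i.e. $v + \varepsilon t$ and $v - \varepsilon t$ must both lie in $\O_{P,A}(\lambda)$ for small $\varepsilon > 0$. This holds because $t$ is supported on a single component $C$ of $\Gamma_v$ with $C \cap A = \emptyset$: the only constraints of $\O_{P,A}$ that $t$ can possibly violate are those involving at least one element of $C$, and among those, the ones tight at $v$ are (by definition of $\Gamma_v$ and of $C$ being a component) exactly equalities between two elements both in $C$ — preserved since $t$ is constant on $C$ — while the non-tight ones survive a small perturbation. One must also check that no constraint relates an element of $C$ to a marked element with equality at $v$; but such a relation would put that marked element in $C$ by definition of $\Gamma_v$, contradicting $C \cap A = \emptyset$. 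This feasibility bookkeeping is the main (mild) obstacle; everything else is the routine translation of ``the tight cone has trivial lineality space'' into graph-connectivity language. Since \cref{cor:markedorder} is attributed to Pegel and only the statement (not a self-contained proof) is needed here, I would present this as a short self-contained argument and remark that the same graph $\Gamma_v$ is what we will later call the identity diagram, so that \cref{thm:markedorder} refines the present statement with an ``if and only if''.
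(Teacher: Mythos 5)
Your proposal is correct and follows essentially the same route as the paper: the paper establishes the identity-diagram criterion (\cref{thm:markedorder}) via the same $x\pm v$ vertex lemma, with the non-vertex direction given by exactly your constant perturbation supported on a component disjoint from $A$, and then deduces \cref{cor:markedorder} by reading each coordinate off a path to a marked element. The only difference is organizational: you prove just the single implication you need (vertex $\Rightarrow$ every component meets $A$) rather than the full equivalence, and your feasibility bookkeeping is at the same level of detail as the paper's.
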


This implies the following.

\begin{cor}\label{cor:abc}
	Let $G$ be a complex classical group of type $\mathsf{X}_n\neq \Dn$ and $\lambda=\sum_{i=1}^{n}\lambda_i\epsilon_i\in\Lambda^+$. Then the Gelfand-Tsetlin polytope $GT_{\mathsf{X}_n}(\lambda)$ is a lattice polytope if $\mathsf{X}_n = \An$ or $\Cn$ or if $\mathsf{X}_n = \Bn$ and $\langle\lambda, \alpha_n^\vee\rangle \in 2\ZZ$.
\end{cor}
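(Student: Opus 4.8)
The plan is to deduce Corollary~\ref{cor:abc} directly from Pegel's Theorem~\ref{cor:markedorder} together with the explicit description of the Gelfand-Tsetlin polytopes in types $\An$, $\Bn$, $\Cn$. First I would observe that in each of these three types the inequalities defining $GT_{\mathsf{X}_n}(\lambda)$ (Figures~\ref{fig:gta} and~\ref{fig:gtbc}) are precisely the covering relations of a finite poset $P$ on the set of coordinate symbols together with the marked symbols (the $\lambda_i$ in the top row and the trailing zeros in the last column of type $\Bn$/$\Cn$). Concretely, one declares $p \leq q$ whenever $q$ sits immediately to the upper-left or upper-right of $p$ in the triangular array, and takes $A$ to be the set of entries in the top row (set to $\lambda_1,\dots,\lambda_n$) and, in types $\Bn$ and $\Cn$, the entries forced to equal $0$ in the rightmost diagonal. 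Then $GT_{\mathsf{X}_n}(\lambda) = \O_{P,A}(\mu)$ where $\mu$ records the marking values. By Theorem~\ref{cor:markedorder}, every vertex of this marked order polytope has all coordinates in $\{\mu_a \mid a \in A\}$.

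Next I would analyze what the marking values actually are, in terms of the standard identification $\lambda = \sum \lambda_i \epsilon_i$. In type $\An$, $\lambda$ dominant integral for $\SL_{n+1}$ means $\lambda_1 \geq \dots \geq \lambda_n \geq 0$ are integers, so the marking takes integer values and the vertices are lattice points; hence $GT_{\An}(\lambda)$ is a lattice polytope. In type $\Cn$ ($\Sp_{2n}$), dominant integral again forces all $\lambda_i \in \ZZ$, and the extra marked zeros are of course integral, so again all marking values are integers and Theorem~\ref{cor:markedorder} gives that $GT_{\Cn}(\lambda)$ is a lattice polytope. In type $\Bn$ ($\SO_{2n+1}$) the subtlety is that dominant integral weights can have $\lambda_n \in \frac12 + \ZZ$ (spin weights); the $\epsilon_i$-coordinates are integral precisely when $\langle \lambda, \alpha_n^\vee \rangle = 2\lambda_n \in 2\ZZ$. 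So under the hypothesis $\langle \lambda, \alpha_n^\vee\rangle \in 2\ZZ$, all $\lambda_i$ are integers, the marking is integral, and Theorem~\ref{cor:markedorder} again yields a lattice polytope.

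The only point requiring a little care — and the step I would flag as the main obstacle, though it is more bookkeeping than genuine difficulty — is verifying that the poset structure is set up correctly so that the defining inequalities of $GT_{\mathsf{X}_n}(\lambda)$ coincide exactly with the order-polytope inequalities $x_p \leq x_q$ for covering relations, $\mu_a \leq x_p$, $x_p \leq \mu_b$. In type $\Bn$ one must check that the conditions on the marked poset in Definition~\ref{defi:markedorder} are met, namely that $A$ contains all minimal and maximal elements (the top row supplies the maximal elements, and one needs the trailing zeros together with $\lambda_n$, or the bottom entry, to cover the minimal elements) and that the marking is order-preserving (which follows from $\lambda_1 \geq \dots \geq \lambda_n \geq 0$). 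Once this identification is in place, the corollary is immediate from Theorem~\ref{cor:markedorder}, since an integral marking forces all vertices, hence the whole polytope, to be a lattice polytope. I would present this as a short paragraph of poset bookkeeping followed by a one-line invocation of Pegel's result in each of the three cases.
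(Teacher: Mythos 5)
Your proposal is correct and matches the paper's own proof: both reduce the statement to Pegel's result (\cref{cor:markedorder}) after identifying the type $\An$, $\Bn$, $\Cn$ Gelfand-Tsetlin polytopes as marked order polytopes, and then check that the marking values $\lambda_1,\dots,\lambda_n$ (and the zeros) are integral precisely in the stated cases, with the type $\Bn$ half-integrality controlled by $\langle\lambda,\alpha_n^\vee\rangle$. The only difference is presentational: the paper treats the marked-order-polytope identification as already established (it is stated in a remark in \cref{sec:markedorder}), whereas you spell out the poset bookkeeping explicitly.
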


\begin{proof}
	By \cref{cor:markedorder} it is clear that $GT_{\mathsf{X}_n}(\lambda)$ will be a lattice polytope if all coordinates of the marking vector are integral. The set of non-zero coordinates is precisely $\{\lambda_1, \ldots, \lambda_n\}$, so we need to know when the coefficients of a dominant integral weight written in the $\epsilon_i$ are integral. By \cite[Chapter 10, Theorem 6.1, and Chapter 11, Theorem 6.6]{P} this is always the case in types $\An$ and $\Cn$. In type $\Bn$ however we could get half-integral coefficients. To be more precise, each $\lambda_i$ can be written as the sum of some integers plus $\frac{\langle \lambda, \alpha_n^\vee\rangle}{2}$. So we see that the $\lambda_i$ are integers if and only if $\langle\lambda,\alpha_n^\vee\rangle$ is an even integer, which concludes the proof.
\end{proof}

This corollary would allow us to prove one implication of the claims of \cref{conj:string} and hence \cref{thm:main} for types $\An$, $\Bn$ and $\Cn$ via Littelmann's affine bijection from \cref{thm:stringgtabc}. The reason is that the inverse of Littelmann's map sends vertices to vertices and lattice points in the Gelfand-Tsetlin patterns (notice that these are a only a proper subset of the standard Gelfand-Tsetlin patterns in type $\Bn$) to lattice points of the standard string polytope.

Sadly, we cannot prove the {\em only-if}-part in type $\Bn$ directly and we simply cannot use these methods in the case $\Dn$. 
Firstly, the Gelfand-Tsetlin polytope in type $\Dn$ is not a marked order polytope. Secondly, since Littelmann's bijection of \cref{thm:stringgtd} is only piecewise affine, it need not send vertices to vertices. Some vertices could be send to non-vertices and vice-versa. 

From examples I reached the following conjecture which would at least solve this problem. But I could not find a proof.

\begin{conj}
	Let $\phi_\lambda$ be the map from \cref{thm:stringgtd}, i.e. the piecewise affine bijection with $\phi_\lambda(\Q_{\underline{w_0}^\mathrm{std}}(\lambda)) = GT_{\Dn}(\lambda)$. Then $\phi_\lambda$ induces a bijection between $\vert \Q_{\underline{w_0}^\mathrm{std}}(\lambda)$ and $\vert GT_{\Dn}(\lambda)$.
\end{conj}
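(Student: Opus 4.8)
The plan is to exploit the cellular structure of Littelmann's piecewise affine map. Abbreviate $\Q := \Q_{\underline{w_0}^\mathrm{std}}(\lambda)$ and $GT := GT_{\Dn}(\lambda)$. By \cite[Corollary~9]{L} the non-affine behaviour of $\phi_\lambda$ is governed entirely by the $\min$-terms occurring in Littelmann's formula, so there is a finite hyperplane arrangement --- the walls being the hyperplanes on which two arguments of such a $\min$ agree --- subdividing $\RR^N$ into closed regions $R_1,\dots,R_k$ with $\phi_\lambda|_{R_i}=A_i|_{R_i}$ for an affine automorphism $A_i$ of $\RR^N$, and with $A_i=A_j$ on the common wall of $R_i$ and $R_j$. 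Put $C_i := R_i\cap\Q$ and $D_i := \phi_\lambda(C_i)=A_i(C_i)$. One checks that $\{C_i\}$ is a polyhedral subdivision of $\Q$, that $\{D_i\}$ is a polyhedral subdivision of $GT$, and that $\phi_\lambda$ restricts to an affine isomorphism $C_i\to D_i$ for every $i$.

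First I would reduce the problem to a local one. Since affine isomorphisms preserve vertices, $\phi_\lambda$ induces a bijection between the $0$-cells of $\{C_i\}$ and those of $\{D_i\}$, well defined because the $A_i$ agree on overlaps. Every vertex of $\Q$ is such a $0$-cell --- being an extreme point of each $C_i\subseteq\Q$ that contains it --- and symmetrically on the $GT$ side, so it suffices to prove that a $0$-cell $v$ of $\{C_i\}$ is a vertex of $\Q$ if and only if $\phi_\lambda(v)$ is a vertex of $GT$.

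Next I would pass to tangent cones. The cone of feasible directions decomposes as $T_v\Q = \bigcup_{i:\,v\in C_i}T_v C_i$, and $v$ is a vertex of $\Q$ precisely when $T_v\Q$ is pointed; likewise $\phi_\lambda(v)$ is a vertex of $GT$ precisely when $T_{\phi_\lambda(v)}GT = \bigcup_{i:\,v\in C_i}L_i(T_v C_i)$ is pointed, where $L_i$ is the linear part of $A_i$. If $v$ lies in the interior of a single region, then $\phi_\lambda$ coincides near $v$ with the affine automorphism $A_i$, and pointedness is transported across; thus the equivalence is clear away from the walls. The only remaining case --- and, applying the same argument to $\phi_\lambda^{-1}$, the only case on the $GT$ side --- is a vertex of $\Q$ (respectively of $GT$) lying on a wall.

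This wall case is the main obstacle, and it is genuinely delicate: applying distinct linear automorphisms $L_i$ to the pieces $T_v C_i$ of a pointed cone may glue to a non-pointed cone, and conversely, so pointedness of $T_v\Q$ is not a formal consequence of pointedness of its image. The walls do pass through vertices --- for small or fundamental $\lambda$ many Gelfand-Tsetlin entries are forced equal at a vertex --- so the case cannot be dodged. Settling it would require an explicit analysis of Littelmann's $\Dn$ coordinate change along each of its walls: for a wall-vertex $v$ one reads off from the (at most four-term) Gelfand-Tsetlin inequalities which of them can be simultaneously tight, describes the finitely many tangent cones $T_v C_i$, applies the corresponding $L_i$, and checks directly that the glued cone is pointed --- then argues symmetrically from the $GT$ side. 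I expect the combinatorics of which inequalities meet at a wall-vertex to be controlled but case-heavy and hard to organise uniformly in $n$; conversely, such wall-vertices for large $n$ are the natural place to look for a counterexample should the conjecture fail. It is exactly this difficulty that we bypass in \cref{sec:tweak,sec:tweakdiagram} by replacing $GT$ with the tweaked Gelfand-Tsetlin polytope, for which an honestly affine comparison with the string polytope is available.
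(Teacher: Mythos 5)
You have correctly diagnosed the state of affairs: this statement is explicitly labelled a conjecture in the paper, and the author states in the surrounding text that they could not find a proof. Consequently there is no ``paper's own proof'' to compare against, and your write-up should not be judged against one. What you have produced is not a proof either --- and you say so --- but it is a correct and honest structural analysis of why the statement is hard, and it matches the paper's own assessment and response.

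Your reduction is sound as far as it goes: the hyperplane arrangement coming from the $\min$-terms in Littelmann's formula does subdivide $\RR^N$ into regions on which $\phi_\lambda$ is affine with full-rank linear part (full rank because $\phi_\lambda$ is a global bijection and the regions are full-dimensional), the induced subdivisions of $\Q$ and $GT$ match cell-for-cell, vertices of $\Q$ and of $GT$ are necessarily $0$-cells, and the question localises to whether pointedness of the glued tangent cone is preserved. Your observation that this fails to be automatic precisely when the $0$-cell sits on a wall --- because distinct linear maps applied to the pieces of a pointed cone need not reassemble into a pointed cone --- is exactly the obstruction, and it is not avoidable, since walls do pass through vertices for small $\lambda$. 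The paper makes the same judgement implicitly: rather than settle the wall case, it replaces $GT_{\Dn}(\lambda)$ with the tweaked polytope $\widetilde{GT}(\lambda)$ (\cref{sec:tweak,sec:tweakdiagram}), for which the comparison map $\widetilde{\phi}_\lambda$ is genuinely affine (\cref{thm:stringtweak}) and vertices transfer for free. Note one point worth recording explicitly in your write-up: the paper's main results do \emph{not} retroactively prove this conjecture, because the bridge $\psi_\lambda$ of \cref{thm:tweak} between $\widetilde{GT}(\lambda)$ and $GT_{\Dn}(\lambda)$ is itself only piecewise affine (it takes a $\min$), so vertex-preservation along the composite $\phi_\lambda=\psi_\lambda\circ\widetilde{\phi}_\lambda$ remains exactly as open as the original conjecture. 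Your closing remark that wall-vertices at large $n$ are the natural hunting ground for a counterexample is a reasonable caution and is consistent with the author's own uncertainty.
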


So we will develop a tweaked version of Gelfand-Tsetlin patterns in type $\Dn$ that can be studied more easily. Additionally we will introduce a new method to classify vertices of these tweaked Gelfand-Tsetlin polytopes via diagrammatic combinatorics in \cref{sec:tweakdiagram}.

We will also apply these methods to the other classical types, thereby reproving \cref{cor:abc} and additionally the missing second implication of \cref{conj:string} in type $\Bn$.

\section{Identity Diagrams}\label{sec:identity}

We will state our definitions for arbitrary marked posets. The reductions to the Gelfand-Tsetlin cases $\An$, $\Bn$ and $\Cn$ are obvious.

\begin{defi}
	Let $(P,A,\lambda)$ be a marked poset and let $x\in\O_{P,A}(\lambda)$. The \textbf{identity diagram} $\D_{P,A}^\lambda(x)$ associated to $(P,A,\lambda,x)$ is a graph that contains all nodes and arrows of the Hasse diagram of $P$. Additionally, we draw an arrow $q \to p$ between two nodes $p$ and $q$ whenever there exists an arrow $p\to q$ in the Hasse diagram of $P$ and $x_p = x_q$ (if $p,q\neq A$) or $\lambda_p = x_q$ (if $p \in A$) or $x_p = \lambda_q$ (if $q\in A$).
\end{defi}

Whenever we draw these identity diagrams, for simplicity we will represent double arrows $p \rightleftarrows q$ by straight lines and omit single arrows.  From this practice we get the following non-standard terminology.

\begin{defi}
	Let $(P,A,\lambda)$ be a marked poset, let $x\in\O_{P,A}(\lambda)$ and let $\D_{P,A}^\lambda(x)$ be the associated identity diagram. A subset $\C$ of nodes is called \textbf{connected} if it is connected via double arrows, i.e. for any two nodes $p$ and $q$ in $\C$ there exists a (possibly empty) sequence $p_1, \ldots, p_t\in\C$ such that
	\[ p \rightleftarrows p_1 \rightleftarrows \ldots \rightleftarrows p_t \rightleftarrows q.\]
	The maximal (with respect to inclusion) connected subsets are called \textbf{connected components}.
\end{defi}

Additionally, when drawing identity diagrams for Gelfand-Tsetlin patterns we will represent the nodes corresponding to marked elements as follows. The zeros in the rightmost column will be drawn as small circles, while the nodes corresponding to the $\lambda_i$ in the first row will be drawn as small crosses. This change is made for easier readability as the following example shows.

\begin{ex}\label{ex:identitydiagram}
	Let $G=\Sp_8$ and $\lambda = 2\epsilon_1 + 2\epsilon_2+\epsilon_3+0\epsilon_4$. The pattern 		\begin{center}\begin{tikzpicture}
		\matrix (m) [matrix of nodes,align=center,row sep={2ex,between origins},column sep={1em,between origins},minimum width=2em]
		{
			$2$&&$2$&&$1$&&$0$\\
			&$2$&&$\frac{3}{2}$&&$1$&\\
			&&$2$&&$1$&&$0$\\
			&&&$\frac{3}{2}$&&$1$&\\
			&&&&$1$&&$0$\\
			&&&&&$0$&\\
			&&&&&&$0$\\
		};
		\end{tikzpicture}\end{center}
	admits the identity diagram and its visually more appealing drawing depicted in \cref{fig:c4}.
	\begin{figure}[htb]\centering
		\caption{Identity diagram and visually more appealing drawing of the Gelfand-Tsetlin pattern in \cref{ex:identitydiagram}.}\label{fig:c4}
		\begin{tikzpicture}[>=stealth, every node/.prefix style={draw=black,circle, fill, minimum size=.125cm, inner sep=0pt, outer sep=0pt}]
		%	\foreach \Point in {(0,0), (1,0), (2,0), (3,0), (0.5,-0.5), (1.5,-0.5), (2.5,-0.5), (1,-1), (2, -1), (3, -1), (1.5,-1.5), (2.5,-1.5), (2,-2), (3,-2), (2.5,-2.5)}
		%		\draw node[fill=black];
		\draw
		(0,0) node (1) {}
		(1,0) node (2) {}
		(2,0) node (3) {}
		(3,0) node (4) {}
		(0.5,-0.5) node (5) {}
		(1.5,-0.5) node (6) {}
		(2.5,-0.5) node (7) {}
		(1,-1) node (8) {}
		(2,-1) node (9) {}
		(3,-1) node (10) {}
		(1.5,-1.5) node (11) {}
		(2.5,-1.5) node (12) {}
		(2,-2) node (13) {}
		(3,-2) node (14) {}
		(2.5,-2.5) node (15) {}
		(3,-3) node (16) {};
		\draw[thick,->,bend left] 
		(4) edge (7)
		(7) edge (3)
		(3) edge (6)
		(6) edge (2)
		(2) edge (5)
		(5) edge (1)
		(10) edge (7)
		(7) edge (9) 
		(9) edge (6)
		(6) edge (8)
		(8) edge (5)
		(10) edge (12)
		(12) edge (9)
		(9) edge (11) 
		(11) edge (8)
		(14) edge (12)
		(12) edge (13) 
		(13) edge (11)
		(14) edge (15)
		(15) edge (13)
		(16) edge (15)
		% additional arrows
		(1) edge (5)
		(5) edge (2)
		(5) edge (8)
		(3) edge (7)
		(9) edge (7)
		(9) edge (12)
		(13) edge (12)
		(15) edge (14)
		(15) edge (16)
		;
		\draw[thick]
		(4,-1.5) node[minimum size=0cm] (l) {}
		(4.5,-1.5) node[minimum size=0cm] (m) {}
		(5,-1.5) node[minimum size=0cm] (r) {}
		(l) edge[bend left] (m)
		(m) edge[->,bend right] (r);
		\foreach \Point in {(6,0), (7,0), (8,0)}
		\draw \Point node[cross out,thick] {};
		\foreach \Point in {(9,0), (9,-1)}
		\draw \Point node[fill=white]  (\Point) {};
		\draw
		(9,-2) node[fill=white] (1) {}
		(9,-3) node[fill=white] (2) {};
		\foreach \Point in {(6.5,-0.5), (7.5,-0.5), (8.5,-0.5), (7,-1), (8,-1), (7.5,-1.5), (8.5,-1.5), (8,-2), (8.5,-2.5)}
		\draw \Point node {};
		\draw[thick]
		(6,0) 
		--  (6.5,-0.5) 
		-- (7,0)
		(6.5,-0.5) 
		-- (7,-1)
		(8,0)
		-- (8.5,-0.5)
		-- (8,-1)
		-- (8.5,-1.5)
		-- (8,-2)
		(1)
		-- (8.5,-2.5)
		-- (2);
		\end{tikzpicture}\end{figure}
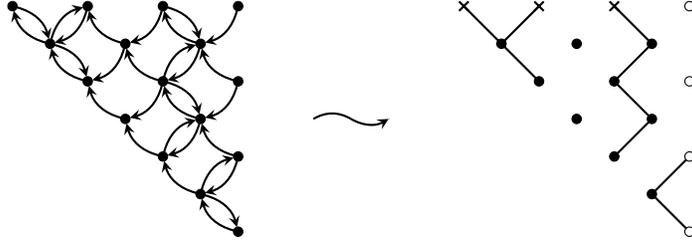
\end{ex}

These diagrams give us an easy way to {\em draw} vertices of marked order polytopes as the following result shows.

\begin{theorem*}[\cref{thm:markedorder}]
	Let $(P, A, \lambda)$ be a marked poset. A point $x\in\O_{P,A}(\lambda)$ is a vertex of the marked order polytope if and only if every connected component of the associated diagram $\D_{P,A}^\lambda(x)$ contains a marked element, i.e. an element of $A$.
\end{theorem*}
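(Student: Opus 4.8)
The plan is to prove both implications by the standard criterion that a point in a polytope is a vertex if and only if it is not a nontrivial convex combination of two other points of the polytope, equivalently, there is no nonzero direction vector $v$ such that $x \pm \varepsilon v$ both lie in $\O_{P,A}(\lambda)$ for small $\varepsilon > 0$. A perturbation $v \in \RR^{P \setminus A}$ keeps $x \pm \varepsilon v$ feasible for small $\varepsilon$ precisely when $v$ respects all the inequalities that are \emph{tight} at $x$: namely $v_p = v_q$ whenever $x_p = x_q$ for a covering relation $p \lessdot q$, $v_p = 0$ whenever $x_p = \lambda_a$ for a covering relation involving a marked element $a$ (setting $v_a := 0$ for $a \in A$ by convention), while the non-tight inequalities impose no constraint. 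The point of the identity diagram $\D_{P,A}^\lambda(x)$ is that its double arrows record exactly these tight covering relations, so that the space of admissible directions $v$ is the set of vectors that are constant on each connected component of $\D_{P,A}^\lambda(x)$ and equal to $\lambda_a$-forced value (i.e.\ zero) on any component meeting $A$.

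First I would set up this dictionary carefully: given $x \in \O_{P,A}(\lambda)$, I define the linear subspace $W(x) \subseteq \RR^{P\setminus A}$ cut out by $v_p = v_q$ for every double arrow $p \rightleftarrows q$ in $\D_{P,A}^\lambda(x)$ (with the convention $v_a = 0$ for marked $a$), and I claim $x$ is a vertex iff $W(x) = 0$. The ``only if'' direction is immediate: if $0 \neq v \in W(x)$, then because only finitely many inequalities are non-tight and each is strict at $x$, there is $\varepsilon > 0$ with $x \pm \varepsilon v \in \O_{P,A}(\lambda)$ (the tight ones are preserved since $v \in W(x)$, the slack ones survive a small perturbation), so $x$ is a midpoint and not a vertex. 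For the ``if'' direction, conversely, if $x$ is not a vertex, write $x = \tfrac12(x' + x'')$ with $x' \neq x''$ feasible; then $v := x' - x$ is nonzero, and I must check $v \in W(x)$: for any tight covering relation, the inequality $x'_p \le x'_q$ and $x''_p \le x''_q$ together with $x'_p + x''_p = x'_q + x''_q$ (from $x_p = x_q$) force $x'_p = x'_q$, hence $v_p = v_q$; similarly a tight marked inequality forces $v_p = 0$. So $v \in W(x) \setminus \{0\}$.

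Having established $x$ is a vertex $\iff W(x) = 0$, the remaining step is purely combinatorial: $W(x) = 0$ if and only if every connected component of $\D_{P,A}^\lambda(x)$ contains a marked element. Indeed, a vector $v$ lies in $W(x)$ exactly when it is constant on each connected component of the graph (since double arrows connect equal values) and takes the value $0$ (the common marked value $\lambda_a$ minus... — more precisely $v$ is constant on components and is forced to $0$ on any component containing an element of $A$). Thus if every component meets $A$, then $v$ must be $0$ everywhere, so $W(x) = 0$; and if some component $\C$ misses $A$, then the indicator-type vector that is $1$ on $\C \cap (P \setminus A)$ and $0$ elsewhere is a nonzero element of $W(x)$. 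One small subtlety to handle: a connected component could in principle contain \emph{two} marked elements with different $\lambda$-values, but the tightness forces all entries in the component equal, and since $x$ interpolates between marked values consistently with the poset order, this situation is automatically consistent — it still forces $v \equiv 0$ on the component, so the conclusion is unaffected.

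The main obstacle I anticipate is the bookkeeping around marked elements sitting \emph{inside} (not just at the boundary of) a connected component, and making the convention $v_a = 0$ for $a \in A$ interact cleanly with the definition of the identity diagram's double arrows into and out of marked nodes. I would address this by treating $\RR^P$ (all of $P$, including $A$) as the ambient space, embedding $\RR^{P \setminus A} \hookrightarrow \RR^P$ via $v \mapsto (v, 0)$, and phrasing everything in terms of vectors on the full vertex set of the Hasse diagram that are constant on connected components of $\D_{P,A}^\lambda(x)$ and vanish on $A$; then ``component contains a marked element'' becomes literally ``the constant value on that component is pinned to $0$.'' Everything else is a routine finiteness-of-slack-inequalities argument and a connectedness argument, so I would keep those brief.
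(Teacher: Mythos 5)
Your proposal is correct and follows essentially the same route as the paper: both reduce to the criterion that $x$ is a vertex iff no nonzero direction $v$ satisfies $x \pm \varepsilon v \in \O_{P,A}(\lambda)$, observe that tight covering inequalities force $v$ constant on connected components of the identity diagram (and zero on components meeting $A$), and construct the indicator-type perturbation supported on an unmarked component for the converse. Your two-step packaging via the subspace $W(x)$ is a slightly cleaner organization of the same argument, and the ``two marked elements in one component'' subtlety you flag is in fact vacuous since equality along a component forces their $\lambda$-values to coincide.
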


Although this theorem sounds quite technical, it is actually quite practical, as the following consequence shows.

\begin{cor}
	A point in a Gelfand-Tsetlin polytope is a vertex, if each entry of its Gelfand-Tsetlin pattern is equal to its upper left or upper right neighbor.
\end{cor}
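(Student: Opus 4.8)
\section*{Proof proposal}

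The plan is to reduce everything to \cref{thm:markedorder}. By the remark following \cref{defi:markedorder}, in types $\An$, $\Bn$ and $\Cn$ the Gelfand--Tsetlin polytope of weight $\lambda$ is a marked order polytope $\O_{P,A}(\lambda)$ in which the underlying poset $P$ is the ``staircase'' whose Hasse diagram is exactly the array of covering relations drawn in \cref{fig:gta} and \cref{fig:gtbc}: each node of a Gelfand--Tsetlin pattern is joined by a Hasse edge precisely to its upper-left and its upper-right neighbour (when these exist), and the marked set $A$ consists of the top row together with the zeros along the right-hand edge. The first step is to record this description carefully — in particular that the inequalities defining the patterns are mutually irredundant, so that every one of them is a genuine covering relation, and to note that every non-marked entry has at least one neighbour lying one row above it.

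Now let $x$ be a Gelfand--Tsetlin pattern each of whose entries equals its upper-left or its upper-right neighbour; I want to show every connected component of the identity diagram $\D_{P,A}^\lambda(x)$ contains a marked element. Take any non-marked node $p$. By hypothesis there is a Hasse-neighbour $q$ of $p$, lying one row higher, with $x_p=x_q$ (or $x_p=\lambda_q$ if $q$ is marked); hence $p\rightleftarrows q$ is a double arrow, and $p$ and $q$ lie in the same connected component. Iterating, one builds a walk $p=p_0\rightleftarrows p_1\rightleftarrows p_2\rightleftarrows\cdots$ in which the row index strictly decreases at each step. Since the pattern has only finitely many rows, this walk must terminate, and it can only terminate at a node that has no available upper neighbour to move to — i.e.\ at a marked element (a trailing zero reached along the way, or an entry of the top row). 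Thus the component of $p$ meets $A$; components consisting of marked elements trivially meet $A$; so every component of $\D_{P,A}^\lambda(x)$ contains a marked element, and \cref{thm:markedorder} yields that $x$ is a vertex.

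I do not expect a serious obstacle: the argument is just a finite ``follow the equalities upward'' induction on the row index. The only points that need a little attention are (a) pinning down the Hasse diagram of the Gelfand--Tsetlin poset, including the behaviour at the left boundary of each row (where an entry may have a single upper neighbour) and at the right boundary (where the upper neighbour may already be a marked zero), and (b) observing that the statement is confined to types $\An$, $\Bn$, $\Cn$, since in type $\Dn$ the Gelfand--Tsetlin polytope is not a marked order polytope and \cref{thm:markedorder} is unavailable.
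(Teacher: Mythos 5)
Your proposal is correct and is exactly the intended derivation: the paper states this corollary as an immediate consequence of \cref{thm:markedorder}, and your ``follow the equalities upward until you hit the marked top row or a trailing zero'' argument is precisely how each connected component of the identity diagram is seen to contain a marked element. Your caveat restricting to types $\An$, $\Bn$, $\Cn$ matches the paper's context, since the corollary is placed as a consequence of the marked order polytope criterion and the type $\Dn$ Gelfand--Tsetlin polytope is explicitly not a marked order polytope.
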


To prove this result we use the following standard trick.

\begin{lemma}\label{lemma:vertices}
	Let $\P\subseteq\RR^d$ be a convex polytope. Then $x\in\P$ is a vertex of $\P$ if and only there does not exist a vector $v\in\RR^d$, $v\neq0$, such that $x+v \in \P$ and $x -v \in\P$.
\end{lemma}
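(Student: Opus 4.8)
The final statement is the elementary fact that $x\in\P$ is a vertex precisely when there is no nonzero $v$ with both $x+v$ and $x-v$ in $\P$. I would prove this directly from the definition of a vertex as a point of $\P$ that is not a proper convex combination of two distinct points of $\P$ (equivalently, an extreme point), so that no external results are needed.

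\textbf{Forward direction (contrapositive).} Suppose such a $v\neq 0$ exists with $x\pm v\in\P$. Then $x = \tfrac12(x+v) + \tfrac12(x-v)$ expresses $x$ as the midpoint of two distinct points of $\P$, so $x$ lies in the relative interior of the segment $[x-v,\,x+v]\subseteq\P$ and hence is not an extreme point, i.e. not a vertex. I would state this as: if $x$ is a vertex, no such $v$ exists.

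\textbf{Reverse direction (contrapositive).} Suppose $x$ is not a vertex. Then by definition there exist $y,z\in\P$ with $y\neq z$ and $x = t y + (1-t) z$ for some $t\in(0,1)$. Set $v := t\,(y - z)$; since $y\neq z$ and $t>0$ we have $v\neq 0$. A short computation gives $x + \tfrac{1-t}{t}\,v = y$ — hmm, better to choose the scaling symmetrically. Concretely, put $w := y - z \neq 0$ and note $x = z + t w$ and $x = y - (1-t) w$. Let $\varepsilon := \min\{t,\,1-t\} > 0$ and set $v := \varepsilon w$. Then $x + v = x + \varepsilon w$ and $x - v = x - \varepsilon w$ both lie on the segment $[z,y]\subseteq\P$ (since $x \pm \varepsilon w = z + (t\pm\varepsilon) w$ with $t\pm\varepsilon \in [0,1]$), hence both are in $\P$, and $v\neq 0$. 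This exhibits the forbidden vector, completing the contrapositive.

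\textbf{Main obstacle.} There is essentially no obstacle: the only care needed is the choice of scaling for $v$ so that both $x+v$ and $x-v$ remain inside the segment $[z,y]$ (taking $\varepsilon=\min\{t,1-t\}$ handles the asymmetric case $t\neq\tfrac12$), and being explicit about which definition of ``vertex'' is in force (extreme point / not a proper convex combination). I would also remark that convexity of $\P$ is exactly what guarantees the segments used lie in $\P$, so the lemma genuinely uses the hypothesis that $\P$ is convex.
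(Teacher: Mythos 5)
Your proof is correct, and it takes a genuinely different route from the paper's. You work straight from the extreme-point characterization: the forward direction is the midpoint observation $x=\tfrac12(x+v)+\tfrac12(x-v)$, and the reverse direction rescales the segment $[z,y]$ through $x$ by $\varepsilon=\min\{t,1-t\}$ so that both $x\pm v$ stay inside it; this is clean, and as you note it really only uses convexity, so it proves the statement for arbitrary convex sets once ``vertex'' is read as ``extreme point.'' The paper instead argues through the inequality description $\P=\{y\mid Ay\leq b\}$: for a vertex it extracts an invertible $d\times d$ submatrix $\tilde A$ of active constraints with $\{y\mid\tilde Ay=\tilde b\}=\{x\}$ and deduces $\tilde Av=0$, hence $v=0$; conversely, if such a $v\neq0$ exists it shows the matrix of constraints active at $x$ has rank $<d$, so $x$ is not a $0$-dimensional face. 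The two arguments buy different things: yours is shorter and more elementary, while the paper's is phrased in exactly the language of tight defining inequalities, which is the bookkeeping the identity diagrams and tweaked Gelfand-Tsetlin diagrams perform later (recording which inequalities are equalities at a given point). The only small caveat for splicing your argument into this paper is the one you flag yourself: you should say explicitly that for a polytope ``vertex'' (zero-dimensional face) and ``extreme point'' coincide, since the paper's usage is the face-theoretic one; this equivalence is standard, so it is a presentational point rather than a gap.
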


\iffalse
\begin{proof}
	Let $\P = \{ y\in\RR^d\mid Ay\leq b \}$ for some matrix $A\in M_{r,d}(\RR)$ and vector $b \in \RR^r$.	
	Assume that $x$ is a vertex of $\P$. Since every vertex can be written as the intersection of $n$ facets, there exists a (necessarily invertible) submatrix $\tilde{A}\in M_{d}(\RR)$ of $A$ and a subvector $\tilde{b}\in\RR^d$ of $b$ such that 
	\[ \{ y \in \RR^d \mid \tilde{A}y = \tilde{b}\} = \{ x\}. \]
	Now let $v\in\RR^d$ and assume that $x + v\in \P$ and $x-v\in\P$. This implies that $\tilde{A}(x+v)\leq\tilde{b} $ and $\tilde{A}(x-v) \leq \tilde{b}$. Hence $\tilde{A}v \geq 0$ and $-\tilde{A}v\geq0$, which is only possible if $\tilde{A}v=0$. Since $\tilde{A}$ is invertible, this implies $v=0$.
	
	For the other implication, assume there exists a vector $v\in\RR^d$, $v\neq0$, such that $x+v\in\P$ and $x-v\in\P$. Let $\tilde{A}$ be the submatrix of $A$ consisting of all rows $\alpha_i$ of $A$ such that $\alpha_i\cdot x = b_i$ and let $\tilde{b}$ denote the corresponding subvector of $b$. Since $x+v\in\P$ and $x-v\in\P$ we know that $\tilde{A}(x+v)\leq\tilde{b}$ and $\tilde{A}(x-v)\leq\tilde{b}$. This implies $\tilde{A}v\leq 0$ and $-\tilde{A}v\leq 0$, i.e. $\tilde{A}v=0$. Since $v\neq 0$, the rank of the matrix $\tilde{A}$ must be strictly smaller than $d$. Hence the set $\{ y\in\RR^d\mid \tilde{A}y=\tilde{b} \}$ has affine dimension strictly larger than zero, so $x$ cannot be a vertex of $\P$.
\end{proof}
\fi

We will now prove our theorem on the vertices of marked order polytopes.

\begin{proof}[Proof of \cref{thm:markedorder}]
	Let $x$ be a point in the marked order polytope $\O_{P,A}(\lambda)$. Suppose there exists a vector $v\in\RR^{P\setminus A}$ such that $x+v$ and $x-v$ both lie in the marked order polytope. Let $p,q\in P\setminus A$, $p\leq q$, be two nodes of the identity diagram $\D_{P,A}^\lambda(x)$ such that $p\leftrightarrows q$. Hence we know that $x_p = x_q$.
	
	Now since $x+v$ and $x-v$ lie in the marked order polytope we must have 
	\[ x_p+v_p \leq x_q+v_q \hspace{10pt}\text{ and }\hspace{10pt}x_p-v_p \leq x_q-v_q.\]
	This implies that $v_p = v_q$. Continuing this argument yields $v_p = v_q$ for any two nodes $p$ and $q$ of the identity diagram lying in the same connected component.
	
	Now let $\C$ be a connected component of $\D_{P,A}^\lambda(x)$. If $\C$ contains an element $a\in A$, we know that $x_p=\lambda_a$ for all $p\in\C$. Suppose that $\C$ is not completely contained in $A$. Then there exists a pair $p \leftrightarrows a$ with $p\in\C$ and $a \in A$. Without loss of generality let us assume that $p \leq a$. Then we know that 
	\[ x_p+v_p \leq \lambda_a\hspace{10pt}\text{ and }\hspace{10pt}x_p-v_p \leq \lambda_a,\]
	which implies that $v_p=0$. 
	
	In conclusion we see that $v_p=0$ for all $p\in P\setminus A$ such that the connected component of $p$ contains an element of $A$. By \cref{lemma:vertices} this implies that $x$ is a vertex whenever each connected component of $\D_{P,A}^\lambda(x)$ contains a marked element.
	
	For the other implication let us assume there exists a connected component $\C$ of $\D_{P,A}^\lambda(x)$ that does not contain any marked element. By definition of identity diagrams this means that $x_p < x_q$ for any $p\in\C$ and $q\in \P\setminus(\C\cup A)$ such that $p\leq q$. Additionally $x_p<\lambda_a$ for any $p\in\C$ and $a\in A$ such that $p\leq a$. Analogous statements hold if $p \geq q$ or $p\geq a$. Since the poset is finite we can find $\epsilon > 0$ such that 
	\begin{align*} 
	x_p &\pm \epsilon < x_q \text{ for all } p\in\C \text{ and }q\in \P\setminus(\C\cup A)\text{ such that } p\leq q,\\
	x_p &\pm \epsilon > x_q \text{ for all } p\in\C \text{ and }q\in \P\setminus(\C\cup A)\text{ such that } p\geq q,\\
	x_p &\pm \epsilon < \lambda_a \text{ for all } p\in\C \text{ and } a\in A\text{ such that } p\leq a,\\
	x_p &\pm \epsilon > \lambda_a \text{ for all } p\in\C \text{ and }a\in A\text{ such that } p\geq a.
	\end{align*}
	Consider the vector $v \in \RR^{P\setminus A}$ defined by
	\[ v_p := \begin{cases}
	\epsilon \hspace{10pt}\text{ if }p\in \C\\
	0 \hspace{10pt}\text{ else. }
	\end{cases}  \]
	Then it is clear that $x+v$ and $x-v$ both lie in $\O_{P,A}(\lambda)$. By \cref{lemma:vertices} this implies that $x$ is not a vertex of $\O_{P,A}(\lambda)$.
\end{proof}

Hence, we recover a proof of \cref{cor:markedorder}.

\begin{proof}[Proof of \cref{cor:markedorder}]
	Let $x\in\RR^{P\setminus A}$ be a vertex of $\O_{P,A}(\lambda)$. From \cref{thm:markedorder} we know that every connected component $\C$ of the identity diagram $\D_{P,A}^\lambda(x)$ of $x$ contains an element $a\in A$. By definition this means that that $x_p = \lambda_a$ for all $p\in\C$, hence every coordinate of $x$ must be equal to one of the $\lambda_a$.
\end{proof}

\section{Integrality of Standard String Polytopes in Types $\An$, $\Bn$ and $\Cn$}\label{sec:abc}

At first, we would like to make the connection between \cref{conj:string} and \cref{thm:main} explicit.

%\begin{cor}
%	\cref{conj:string} holds.
%\end{cor}

\begin{rem}
	If $G$ is simply connected\,---\,i.e. $G$ is of type $\An$ or $\Cn$\,---\,it is known (see for example \cite[Chapter 10, Theorem 6.1]{P}) that the irreducible representations of the Lie algebra $\g$ of $G$ are in one-to-one correspondence with the irreducible representations of $G$. So there are no further restrictions on $\lambda$ in these types.
	
	If $G$ however is not simply connected, i.e. $G=\SO_n$, it is known that not every irreducible representation of $\mathfrak{so}_n$ integrates to a representation of $\SO_n$. Instead, in general it integrates only to a representation of the spin group $\Spin_n$ as the universal covering of $\SO_n$. However, in some cases $V(\lambda)$ will still integrate to a representation of $\SO_n$. By \cite[Chapter 11, Theorem 6.6]{P} these cases are precisely the ones listed in \cref{conj:string}.
\end{rem}

We can now (re-)prove \cref{thm:main} in three types.

\begin{proof}[Proof of \cref{thm:main} in Types $\An$, $\Bn$ and $\Cn$]
	Let $G$ be of types $\An$, $\Bn$ or $\Cn$. By \cref{thm:stringgtabc} we know that the vertices of the standard string polytopes are in one-to-one correspondence with the vertices of the Gelfand-Tsetlin polytopes. We have seen that the Gelfand-Tsetlin polytopes are marked order polytopes. For $\lambda=\sum_{i=1}^{n}\lambda_i\epsilon_i\in \Lambda^+$ their marking is given by a vector whose coordinates are precisely the $\lambda_i$ and some zeros. 
	
	In types $\An$ and $\Cn$ we know that $\lambda_i\in\ZZ$ for every $1\leq i\leq n$. By \cref{cor:markedorder} we know that the vertices of the corresponding Gelfand-Tsetlin polytope have integral coordinates, so via Littelmann's map in \cref{thm:stringgtabc} they correspond to lattice points in the standard string polytope.
	
	In type $\Bn$ the same argument holds if $\langle \lambda,\alpha_n^\vee\rangle\in 2\ZZ$. However, if $\langle \lambda,\alpha_n^\vee\rangle$ is an odd integer, we know that $\lambda_i \in \frac{1}{2}+\ZZ$ for all $1 \leq i \leq n$. Now it is enough to notice that the pattern 
	\begin{center}\begin{tikzpicture}
		\matrix (m) [matrix of nodes,align=center,row sep={4ex,between origins},column sep={2.5em,between origins},minimum width=2em]
		{
			$\lambda_1$ && $\lambda_2$ && $\ldots$ && $\lambda_n$ && $0$\\
			&$\lambda_2$&&$\ldots$&&$\lambda_n$&&$0$&\\
			&&$\ddots$&&$\udots$&&$\udots$&&$0$\\
			&&&$\lambda_n$&&$0$&&&\\
			&&&&$0$&&&&$\vdots$\\
			&&&&&$0$&&&\\
			&&&&&&$\ddots$&&$0$\\
			&&&&&&&$0$&\\
			&&&&&&&&$0$\\
		};
		\end{tikzpicture}\end{center}
	lies in $GT_{\Bn}(\lambda)$ for every $\lambda\in\Lambda^+$. Its identity diagram is drawn in \cref{fig:identityb}.
	
	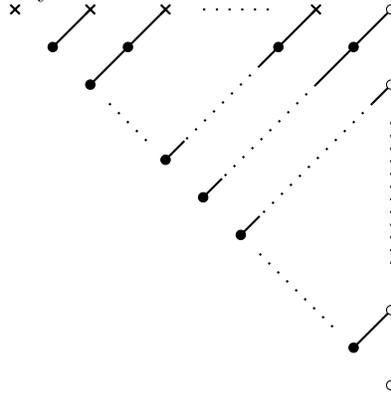
\begin{figure}[htb]\centering
		\caption{Identity diagram of the $\Bn$-Gelfand-Tsetlin pattern described in the proof of \cref{thm:main} with the usual drawing conventions for readability.}\label{fig:identityb}
		\begin{tikzpicture}[every node/.prefix style={draw=black,circle, fill, minimum size=.125cm, inner sep=0pt, outer sep=0pt}]
		\foreach \Point in {(0,0), (1,0), (2,0), (4,0)}
		\draw \Point node[cross out, thick] {};
		\draw 
		(5,0) node[fill=white] (1) {}
		(5,-1) node[fill=white] (2) {}
		(5,-4) node[fill=white] (3) {}
		(5,-5) node[fill=white] (4) {};
		\foreach \Point in {(0.5,-0.5) , (1.5,-0.5), (3.5,-0.5), (4.5,-0.5), (1,-1), (2,-2), (2.5,-2.5), (3,-3), (4.5,-4.5)}
		\draw \Point node {};
		\draw[thick,loosely dotted] (2.5,0) -- (3.5,0);
		\draw[thick,loosely dotted] (1.25,-1.25) -- (1.75,-1.75);
		\draw[thick,loosely dotted] (3.25,-3.25) -- (4.25,-4.25);
		\draw[thick,loosely dotted] (5,-1.5) -- (5,-3.5);		
		%		\draw[loosely dotted] (1.25,-1.25) -- (1.75,-1.75);
		\draw[thick] (1,0) -- (0.5,-0.5);
		\draw[thick] (2,0) -- (1,-1);
		
		\draw[thick] (4,0) -- (3.25,-0.75);
		\draw[thick,loosely dotted] (3.25,-0.75) -- (2.25, -1.75);
		\draw[thick] (2.25,-1.75) -- (2,-2);
		\draw[thick] (1) -- (4,-1);
		\draw[thick,loosely dotted] (4,-1) -- (2.75, -2.25);
		\draw[thick] (2.75,-2.25) -- (2.5,-2.5);
		\draw[thick] (2) -- (4.75,-1.25);
		\draw[thick,loosely dotted] (4.75,-1.25) -- (3.25, -2.75);
		\draw[thick] (3.25,-2.75) -- (3,-3);
		\draw[thick] (3) -- (4.5,-4.5);
		\end{tikzpicture}\end{figure}
	
	We see that every connected component of the identity diagram contains a node corresponding to a marked element of the poset, hence this pattern must be a vertex of $GT_{\Bn}(\lambda)$ by \cref{thm:markedorder}.
	
	By construction and since $n>1$, the coordinate $y_{1,n} = \lambda_n$ of this pattern lies in $\frac{1}{2}+\ZZ$ while the coordinate $y_{2,n} = 0$ lies in $\ZZ$. This shows that the pattern is not standard, hence by \cref{thm:stringgtabc} its preimage under Littelmann's affine bijection is not a lattice point. So we found a non-integral vertex of the standard string polytope $\Q_{\underline{w_0}^\mathrm{std}}(\lambda)$ in type $\Bn$ for every $\lambda\in\Lambda^+$ such that $\langle \lambda,\alpha_n^\vee\rangle$ is odd, which concludes our proof in types $\An$, $\Bn$ and $\Cn$.
\end{proof}

\section{An Algorithmic Application}\label{sec:alg}

By studying identity diagrams abstractly, we can even give a complete classification of vertices of marked order polytope as the following construction in the Gelfand-Tsetlin case shows.

\begin{cons}
	We want to construct vertices of Gelfand-Tsetlin polytopes diagrammatically. Fix a dominant integral weight $\lambda$ and let us consider the poset $(P, A, \tilde{\lambda})$ such that $\O_{P,A}(\tilde{\lambda}) = GT(\lambda)$. Since the marking $\tilde{\lambda}$ is completely determined by the weight $\lambda$, we will just write $\lambda$ for both of them. 
	
	An important step in the construction will be the following {\em completion procedure}. Let $\mathcal{G}$ be a directed graph with node set $P$ that contains every arrow of the Hasse diagram of $P$. Assume that for every arrow $p \to q$ in $\mathcal{G}$ we have either an arrow $p\to q$ or an arrow $q\to p$ in the Hasse diagram. Additionally assume that $\mathcal{G}$ does not contain double arrows in the same direction. By this we mean that $p\leftrightarrows q$ is allowed but $p\rightrightarrows q$ is forbidden. We say that $\mathcal{G}$ is \textbf{complete} if the following two conditions hold.
	\begin{enumerate}%[itemsep=0cm,topsep=.5\baselineskip,parsep=.5\baselineskip]
		\item Whenever there exists a set of arrows $p\leftrightarrows r \leftrightarrows q$ and $p \to s \to q$, there exists a set of arrows $p \leftarrow s \leftarrow q$ as well.
		%		\item Whenever there exists a set of arrows $p\leftrightarrows r \leftrightarrows q$ and $p \leftarrow s \leftarrow q$, there exists a set of arrows $p \to s \to q$ as well.
		\item Whenever there exists a sequence of arrows $a \to p_1 \to \ldots \to p_t \to b$ with $a, b \in A$ such that ${\lambda}_a = {\lambda}_b$, there exists a reverse sequence of arrows $a \leftarrow p_1 \leftarrow \ldots \leftarrow p_t \leftarrow b$ as well.
		%\item Whenever there exists an arrow $p \to q$ such that $p$ is connected via a (possibly empty) sequence of double arrows to a marked element $a\in A$ and $q$ is connected via a (possibly empty) sequence of double arrows to a marked element $b\in A$ such that ${\lambda}_a = {\lambda}_b$, there exists an arrow $q\to p$ as well.
	\end{enumerate} 
	
	Notice that we can complete any graph with the mentioned assumptions by repeatedly adding new arrows\,---\,but only those that are strictly necessary\,---\,until the graph is complete. Of course, we might have to check every set of arrows repeatedly since we are constantly introducing new arrows in this process. However, since we will never produce a double arrow $p \rightrightarrows q$ and $P$ is finite, this algorithm will eventually stop. Additionally, the completion will be unique, i.e. it does not depend on the order in which we check for and\,---\,if necessary\,---\,add arrows.
	
	Coming back to our Gelfand-Tsetlin patterns, we can now describe a process to construct vertices of Gelfand-Tsetlin polytopes.
	
	We start with the Hasse diagram of the corresponding poset. First of all, the graph might not be complete. So whenever there exists an arrow $a \to p \to b$ for some $a,b\in A$ and $p\in P$ we must check whether ${\lambda}_a = {\lambda}_b$. Whenever this is the case, we must add the two arrows $b\to p\to a$ to the Hesse diagram. The resulting graph might not be complete after this initial step, so finish the completion procedure. 
	
	Since we want to create a vertex, we must add more arrows. We can freely introduce new arrows $p \to q$ whenever there exists an arrow $q\to p$ and there does not already exist an arrow $p\to q$. However, we must not add an arrow between two nodes $p$ and $q$ if $p$ is connected (via a possibly empty sequence of double arrows) to an element $a\in A$ and $q$ is connected (via a possibly empty sequence of double arrows) to an element $b\in A$ such that ${\lambda}_a \neq {\lambda}_b$. After adding an arrow, we must always complete the graph. 
	
	We must keep adding new arrows until we can no longer legally add new arrows. At that point, every vertex will be connected (via a sequence of double arrows) to at least one marked element, i.e. every connected component of the resulting diagram will contain at least one marked element.
	
	It is clear that we will always reach this stage. But of course the resulting graph is not unique. By adding different arrows, we will in general terminate in a different graph.
	
	By construction, every terminal graph in our algorithm will be the identity graph of a Gelfand-Tsetlin pattern. Every coordinate $x_p$ of the pattern is given by 
	$x_p = {\lambda}_a$, where $a$ is a marked element in the connected component of $p$.
	
	Because of \cref{thm:markedorder} this pattern must be a vertex of the Gelfand-Tsetlin polytope $GT(\lambda)$. Additionally, we are able to reach every vertex of $GT(\lambda)$ by this procedure (although admittedly it might take some time).
\end{cons}

Let us apply this procedure in an example.

\begin{ex}
	Let $G=\SO_5$ and consider the weight $\lambda = \omega_2 = \frac{1}{2}\epsilon_1 + \frac{1}{2}\epsilon_2$. We will use our usual convention to not draw the arrows of the Hasse diagramm but remember their existence by careful positioning of the nodes. Then the Hasse diagram is drawn in the following way.
	\begin{center}\begin{tikzpicture}[every node/.prefix style={draw=black, circle, fill, minimum size=.1cm, inner sep=0pt, outer sep=0pt}]
		\draw
		(0,0) node[cross out,thick] (1) {}
		(1,0) node[cross out,thick] (2) {}
		(2,0) node[fill=white] (3) {}
		(0.5,-0.5) node (4) {}
		(1.5,-0.5) node (5) {}
		(1,-1) node (6) {}
		(2,-1) node[fill=white] (7) {}
		(1.5,-1.5) node (8) {}
		(2,-2) node[fill=white] (9) {}
		;
		\end{tikzpicture}\end{center}
	
	As an initial step we must search for arrows $a\to p \to b$ with $a,b\in A$ such that the marking of $a$ and $b$ coincides. Since $\lambda_1 = \lambda_2 = \frac{1}{2}$ we have one such path in the upper left corner. Hence we must add arrows in the opposite direction. With our usual convention to draw $\leftrightarrows$ as straight lines we get the following diagram.
	\begin{center}\begin{tikzpicture}[every node/.prefix style={draw=black, circle, fill, minimum size=.1cm, inner sep=0pt, outer sep=0pt}]
		\draw
		(0,0) node[cross out,thick] (1) {}
		(1,0) node[cross out,thick] (2) {}
		(2,0) node[fill=white] (3) {}
		(0.5,-0.5) node (4) {}
		(1.5,-0.5) node (5) {}
		(1,-1) node (6) {}
		(2,-1) node[fill=white] (7) {}
		(1.5,-1.5) node (8) {}
		(2,-2) node[fill=white] (9) {}
		;
		\draw[thick]
		(1) -- (4) -- (2);
		\end{tikzpicture}\end{center}
	
	Now we can add new arrows as opposites of already existing arrows. As an example, let us add an arrow from the middle vertex of the top row to its right bottom neighbor. Below is the resulting diagram and its completion.
	\begin{center}\begin{tikzpicture}[every node/.prefix style={draw=black, circle, fill, minimum size=.1cm, inner sep=0pt, outer sep=0pt}]
		\draw
		(0,0) node[cross out,thick] (1) {}
		(1,0) node[cross out,thick] (2) {}
		(2,0) node[fill=white] (3) {}
		(0.5,-0.5) node (4) {}
		(1.5,-0.5) node (5) {}
		(1,-1) node (6) {}
		(2,-1) node[fill=white] (7) {}
		(1.5,-1.5) node (8) {}
		(2,-2) node[fill=white] (9) {}
		;
		\draw[thick]
		(1) -- (4) -- (2) -- (5);
		\draw[>=stealth,thick]
		(3,-1) edge[bend left] (3.5,-1)
		(3.5,-1) edge[bend right,->] (4,-1);
		\draw
		(5,0) node[cross out,thick] (11) {}
		(6,0) node[cross out,thick] (22) {}
		(7,0) node[fill=white] (33) {}
		(5.5,-0.5) node (44) {}
		(6.5,-0.5) node (55) {}
		(6,-1) node (66) {}
		(7,-1) node[fill=white] (77) {}
		(6.5,-1.5) node (88) {}
		(7,-2) node[fill=white] (99) {}
		;
		\draw[thick]
		(11) -- (44) -- (22) -- (55) -- (66) -- (44);
		\end{tikzpicture}\end{center}
	
	For our next arrow we have three possible choices (all towards the bottom). Two possibilities give the same diagram after completion. The other possibility gives a different diagram. The two distinct complete diagrams are shown below.
	\begin{center}\begin{tikzpicture}[every node/.prefix style={draw=black, circle, fill, minimum size=.1cm, inner sep=0pt, outer sep=0pt}]
		\draw
		(0,0) node[cross out,thick] (1) {}
		(1,0) node[cross out,thick] (2) {}
		(2,0) node[fill=white] (3) {}
		(0.5,-0.5) node (4) {}
		(1.5,-0.5) node (5) {}
		(1,-1) node (6) {}
		(2,-1) node[fill=white] (7) {}
		(1.5,-1.5) node (8) {}
		(2,-2) node[fill=white] (9) {}
		;
		\draw[thick]
		(1) -- (4) -- (2) -- (5) -- (6) -- (4)
		(6) -- (8);
		\draw
		(5,0) node[cross out,thick] (11) {}
		(6,0) node[cross out,thick] (22) {}
		(7,0) node[fill=white] (33) {}
		(5.5,-0.5) node (44) {}
		(6.5,-0.5) node (55) {}
		(6,-1) node (66) {}
		(7,-1) node[fill=white] (77) {}
		(6.5,-1.5) node (88) {}
		(7,-2) node[fill=white] (99) {}
		;
		\draw[thick]
		(11) -- (44) -- (22) -- (55) -- (66) -- (44)
		(77) -- (88) -- (99);			
		\end{tikzpicture}\end{center}
	
	Both complete diagrams terminate the procedure. They correspond to the vertices $(\frac{1}{2}, \frac{1}{2}, \frac{1}{2}, \frac{1}{2})$ and $(\frac{1}{2}, \frac{1}{2}, \frac{1}{2}, 0)$ of $GT_{\mathsf{B}_2}(\omega_2)$.
	
	However, there are other possibilities by choosing a different arrow in the first addition. They lead to the following three complete diagrams.
	\begin{center}\begin{tikzpicture}[every node/.prefix style={draw=black, circle, fill, minimum size=.1cm, inner sep=0pt, outer sep=0pt}]
		\draw
		(0,0) node[cross out,thick] (1) {}
		(1,0) node[cross out,thick] (2) {}
		(2,0) node[fill=white] (3) {}
		(0.5,-0.5) node (4) {}
		(1.5,-0.5) node (5) {}
		(1,-1) node (6) {}
		(2,-1) node[fill=white] (7) {}
		(1.5,-1.5) node (8) {}
		(2,-2) node[fill=white] (9) {}
		;
		\draw[thick]
		(1) -- (4) -- (2)
		(4) -- (6) -- (8)
		(3) -- (5) -- (7)
		;
		\draw
		(5,0) node[cross out,thick] (11) {}
		(6,0) node[cross out,thick] (22) {}
		(7,0) node[fill=white] (33) {}
		(5.5,-0.5) node (44) {}
		(6.5,-0.5) node (55) {}
		(6,-1) node (66) {}
		(7,-1) node[fill=white] (77) {}
		(6.5,-1.5) node (88) {}
		(7,-2) node[fill=white] (99) {}
		;
		\draw[thick]
		(11) -- (44) -- (22)
		(44) -- (66)
		(33) -- (55) -- (77) -- (88) -- (99)
		;			
		\draw
		(9,0) node[cross out,thick] (111) {}
		(10,0) node[cross out,thick] (222) {}
		(11,0) node[fill=white] (333) {}
		(9.5,-0.5) node (444) {}
		(10.5,-0.5) node (555) {}
		(10,-1) node (666) {}
		(11,-1) node[fill=white] (777) {}
		(10.5,-1.5) node (888) {}
		(11,-2) node[fill=white] (999) {}
		;
		\draw[thick]
		(111) -- (444) -- (222)
		(333) -- (555) -- (777) -- (888) -- (999)
		(555) -- (666) -- (888)
		;		
		\end{tikzpicture}\end{center}
	
	Those are the identity diagrams of the vertices $( \frac{1}{2}, 0, \frac{1}{2}, \frac{1}{2})$, $(\frac{1}{2}, 0, \frac{1}{2}, 0)$ and $(\frac{1}{2}, 0, 0, 0)$ respectively. So we have found a visual way to calculate the $5$ vertices of $GT_{\mathsf{B}_2}(\omega_2)$.	
\end{ex}

\section{Tweaked Gelfand-Tsetlin Patterns }\label{sec:tweak}

Let us now consider the even orthogonal case. Before stating our new construction, let us first underline where the problems arise when trying to copy the previous proof of \cref{thm:main}.

In slight deviation of Littlemann's notation in \cite{L}, we will enumerate the simple roots of $\SO_{2n}$ as 
\[ \alpha_1 = \epsilon_1-\epsilon_2, \ldots, \alpha_{n-1} = \epsilon_{n-1} - \epsilon_n, \alpha_n = \epsilon_{n-1}+\epsilon_n. \]
Additionally we will consider the reduced decomposition
\[ \underline{w_0}^{\mathrm{std}} = (s_{n-1} s_n)(s_{n-2}s_{n-1} s_n s_{n-2}) \cdots (s_1 s_2\cdots s_{n-2} s_{n-1} s_n s_{n-2} \cdots s_2 s_1 ) \]
of the longest word of the Weyl group as the standard one. Notice that this does not completely correspond to Littelmann's standard decomposition since we swap the positions of the (commuting) reflections corresponding to $\epsilon_{n-1} - \epsilon_n$ and $\epsilon_{n-1}+\epsilon_n$. However, since the two reflections commute, the string polytopes will be the same after permutation of some coordinates. So we will sloppily say that these two polytopes are {\em the same}. 

We know that the polytope $\Q_{\underline{w_0}^\mathrm{std}}(\lambda)$ will be a subset of $\RR^{n(n-1)}$. We will denote the coordinates of a vector $a\in\RR^{n(n-1)}$ as 
\begin{align*}a &= (a_{n-1,n-1}, a_{n-1,n}, a_{n-2,n-2}, a_{n-2,n-1}, a_{n-2,n}, a_{n-2,n+1}, \ldots \\&\hspace{20pt}\ldots, a_{1,1}, a_{1,2}, \ldots, a_{1,n-2}, a_{1,n-1}, a_{1,n}, a_{1,n+1}, \ldots, a_{1,2n-2}).  \end{align*}
It is understood that $a_{i,j} = 0$ if any of the indices is outside of its allowed range. For every tuple $(a_{i,j})$ with $1\leq i\leq n-1$ and $i\leq j \leq 2n-1-i$ we will use the notation $\overline{a_{i,j}} := a_{i,2n-1-j}$.

We think of these coordinates as entries of the following triangle.
\begin{center}
	\begin{tikzpicture}
	\matrix (m) [matrix of math nodes,row sep=0.5em,column sep=0.5em,minimum width=2em] {
		a_{1,1}& a_{1,2}& \ldots& a_{1,n-2}& a_{1,n-1}& \overline{a_{1,n-1}} & \overline{a_{1,n-2}} & \ldots & \overline{a_{1,2}} & \overline{a_{1,1}}\\
		& a_{2,2}& \ldots& a_{2,n-2}& a_{2,n-1}& \overline{a_{2,n-1}} & \overline{a_{2,n-2}} & \ldots & \overline{a_{2,2}} & \\
		&& \ddots &  &  &  &  & \udots&&\\
		&&&a_{n-2,n-2}& a_{n-2,n-1}& \overline{a_{n-2,n-1}} & \overline{a_{n-2,n-2}}&&&\\
		&&&&a_{n-1,n-1}&\overline{a_{n-1,n-1}}&&&&\\
	};
	\end{tikzpicture}
\end{center}

Notice that our $(n-1)$-st column is Littelmann's $n$-th column and our $n$-th column is Littelmann's $(n-1)$-st column. The reason for this change is that the {$j$-th} column corresponds to the reflection $s_j$ for $j\leq n-2$. So it is more intuitive if the $(n-1)$-st row corresponds to the simple reflection $s_{n-1}$\,---\,and not $s_n$. However, these changes are rather cosmetic.

The following is a combination of \cite[Theorem 7.1 and Corollary 8]{L}.

\begin{theorem}[Littelmann]\label{thm:stringd}
	Let $G=\SO_{2n}$ and $\lambda=\sum_{i=1}^{n}\lambda_i\omega_i\in\Lambda^+$. A tuple $(a_{i,j}) \in \RR^{n(n-1)}$ is an element of $\Q_{\underline{w_0}^\mathrm{std}}(\lambda)$ if and only if the following two sets of conditions hold.
	\begin{center}
		$a_{i,i} \geq a_{i,i+1}, \geq \ldots \geq a_{i,n-2} \geq \left\{\begin{tabular}{c} $a_{i,n-1}$\\$\overline{a_{i,n-1}}$\end{tabular}\right\} \geq \overline{a_{i,n-2}} \geq \ldots \geq \overline{a_{i,i+1}} \geq \overline{a_{i,i}} \geq 0,$\\[.5\baselineskip]
		$a_{n-1,n-1} \geq 0, \hspace{10pt} \overline{a_{n-1,n-1}} \geq 0,$\\
	\end{center}
	for every $1\leq i\leq n-2$ and
	\begin{align*}
	a_{i,j} &\leq \lambda_j + a_{i,j+1} + \overline{a_{i,j+1}} - 2\overline{a_{i,j}} + \overline{a_{i,j-1}} \\&\hspace{20pt}+\sum_{k=1}^{i-1} (a_{k,j-1} - 2a_{k,j} + a_{k,j+1} + \overline{a_{k,j+1}} -2\overline{a_{k,j}} + \overline{a_{k,j-1}}),\\
	\overline{a_{i,j}} &\leq \lambda_j + \overline{a_{i,j-1}} + \sum_{k=1}^{i-1} (a_{k,j-1} - 2a_{k,j} + a_{k,j+1} + \overline{a_{k,j+1}} -2\overline{a_{k,j}} + \overline{a_{k,j-1}}),\\		
	a_{i,n-1} &\leq \lambda_{n-1} + \overline{a_{i,n-2}} + \sum_{k=1}^{i-1}(a_{k,n-2}-2a_{k,n-1}+\overline{a_{k,n-2}}),\\
	\overline{a_{i,n-1}} &\leq \lambda_{n} + \overline{a_{i,n-2}} + \sum_{k=1}^{i-1}(a_{k,n-2}-2\overline{a_{k,n-1}}+\overline{a_{k,n-2}}).
	\end{align*}
	for every $1\leq i\leq n-1$ and $i\leq j \leq n-2$.
\end{theorem}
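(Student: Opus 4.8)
The plan is to obtain this statement as a purely notational translation of Littelmann's original results, so that no genuinely new argument is needed\,---\,only careful bookkeeping. Recall that for any reduced decomposition $\underline{w_0}=s_{i_1}\cdots s_{i_N}$ the string polytope $\Q_{\underline{w_0}}(\lambda)$ is the set of string data of the canonical parametrization, cut out by two families of inequalities: the \emph{string cone} inequalities, which do not depend on $\lambda$, and finitely many \emph{$\lambda$-dependent} upper bounds involving the pairings $\langle\lambda,\alpha_j^\vee\rangle$. For type $\Dn$ and his chosen nice decomposition, Littelmann computes the first family in \cite[Theorem 7.1]{L} and the second in \cite[Corollary 8]{L}; combining these literally gives a description of $\Q_{\underline{w_0}}(\lambda)$, and the task is to rewrite it in the coordinates $(a_{i,j})$ introduced above.

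First I would set up the dictionary between the two conventions. Two relabelings are involved. (i) In each block $s_{n-k}s_{n-k+1}\cdots s_n s_{n-2}\cdots s_{n-k}$ we interchange the two commuting reflections $s_{n-1}$ (for $\epsilon_{n-1}-\epsilon_n$) and $s_n$ (for $\epsilon_{n-1}+\epsilon_n$); since they commute, the resulting polytope differs from Littelmann's only by swapping, within each block, the two coordinates recording the exponents of the $s_{n-1}$- and $s_n$-moves. (ii) We relabel the columns of the triangle so that column $j$ corresponds to $s_j$ for $j\le n-2$, which has the effect that our $(n-1)$-st and $n$-th columns are Littelmann's $n$-th and $(n-1)$-st columns. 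Both operations are bijective permutations of the coordinate set, so applying them to Littelmann's inequalities is just a substitution. The key bookkeeping device is the abbreviation $\overline{a_{i,j}}=a_{i,2n-1-j}$, which precisely encodes the ``second half'' of each row of the triangle, together with the convention that $a_{i,j}=0$ when an index leaves its range, which absorbs all boundary terms.

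Under this substitution I would verify, row by row, that \cite[Theorem 7.1]{L} turns into the first displayed block of inequalities: the descending chains $a_{i,i}\ge a_{i,i+1}\ge\cdots\ge a_{i,n-2}\ge\{a_{i,n-1},\overline{a_{i,n-1}}\}\ge\overline{a_{i,n-2}}\ge\cdots\ge\overline{a_{i,i}}\ge 0$ for $1\le i\le n-2$, together with $a_{n-1,n-1}\ge 0$ and $\overline{a_{n-1,n-1}}\ge 0$. Likewise \cite[Corollary 8]{L} becomes the second block; here one expands the coefficient of $\lambda_j=\langle\lambda,\alpha_j^\vee\rangle$ (note $\lambda=\sum_i\lambda_i\omega_i$, so this is exactly the quantity appearing in Littelmann's bound) in the upper bound on the exponent of the $j$-th move and matches it termwise with the displayed $\sum_{k=1}^{i-1}(\cdots)$ expression, treating $i\le j\le n-2$ generically and $j=n-1$ separately, because there the distinct roots $\alpha_{n-1}$ and $\alpha_n$ produce the two special inequalities for $a_{i,n-1}$ and $\overline{a_{i,n-1}}$.

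The main obstacle is entirely combinatorial: keeping the composed change of variables consistent across all boundary cases\,---\,especially at $j=n-2$ and $j=n-1$, where the block-wise swap of $s_{n-1}$ and $s_n$ interacts with the splitting of the bound into an ``$a$-part'' and an ``$\overline{a}$-part'', and at $i=1$ or $k=1$, where out-of-range entries must vanish and several summands in Littelmann's formula collapse. Once one is confident that the permutation of coordinates is the correct one\,---\,which can be cross-checked on small ranks such as $n=3,4$\,---\,the remainder is a routine term-by-term comparison, and the theorem follows.
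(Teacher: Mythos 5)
Your proposal takes essentially the same approach as the paper: the paper does not supply an independent proof but simply presents the theorem as a combination of Littelmann's Theorem 7.1 and Corollary 8, relying on the two cosmetic coordinate changes (the swap of the commuting reflections $s_{n-1},s_n$ and the relabeling of the last two columns) described in the surrounding text, which is exactly the dictionary and term-by-term substitution you describe.
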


Now we can describe an adapted version of Littelmann's piecewise affine map directly. Notice that we have to make slight adjustments because of our change of reduced decomposition. Let $\lambda=\sum_{i=1}^{n}\lambda_i\epsilon_i$. Notice the base change from $\omega_i$ to $\epsilon_i$ in contrast to \cref{thm:stringd}. Fix a point $(a_{i,j})\in\Q_{\underline{w_0}^\mathrm{std}}(\lambda)$. This point is sent via the piecewise affine bijection $\phi_\lambda$ to a Gelfand-Tsetlin pattern $x = (\mathbf{y}, \mathbf{z})$ in $\RR^{\frac{n(n-1)}{2}}\times\RR^{\frac{n(n-1)}{2}}$. By our convention the row index of $\mathbf{y} = (y_{i,j})$ starts with $i=2$. For easier notation we will set $y_{1,j} := \lambda_j$. Again, in our terminology this row is not actually part of the pattern $x$. The other rows can be computed reciprocally as
\begin{align*}
y_{i,j} &= y_{i-1,j} + a_{i-1,j-1} - a_{i-1,j} - \overline{a_{i-1,j}} + \overline{a_{i,j-1}} \text{ and }\\
y_{i,n} &= y_{i-1,n} + a_{i-1,n-1} - \overline{a_{i-1,n-1}}
\end{align*}
for every $2\leq i \leq n$ and $i\leq j \leq n-1$. For the $z$-coordinates we have the formulae
\begin{align*}
z_{i,j} &= y_{i,j} + \overline{a_{i,j-1}} - \overline{a_{i,j}},\\
z_{i,n-1} &= y_{i,n} + \min \{ a_{i,n-2} - \overline{a_{i,n-1}}, a_{i,n-1} - \overline{a_{i,n-2}} \} \text{ and }\\
z_{n-1,n-1} &= y_{i,n} + a_{n-1,n-1}
\end{align*}
for every $1\leq i \leq n-2$ and $i\leq j\leq n-2$.

So we see that the non-affine part appears in the coordinates $z_{i,n-1}$. Since $\phi_\lambda$ is a bijection we are not loosing any information when applying $\phi_\lambda$ but the minimum function makes it appear that way. 

Our goal now is to embed the Gelfand-Tsetlin pattern in a subspace of a larger vector space to keep track of both values $a_{i,n-2} - \overline{a_{i,n-1}}$ and $a_{i,n-1} - \overline{a_{i,n-2}}$. For that purpose we will introduce new coordinates $z_{i,n-1}^\uparrow$ and $z_{i,n-1}^\downarrow$ to replace the {\em bad} coordinate $z_{i,n-1}$.

For easier presentation we will use the following notation.

\begin{nota}
	Let $a$, $b$, $c$, $d$, $e$ and $f$ be some real numbers. We will write
	\begin{center}
		\begin{tikzpicture}
		\matrix (m) [matrix of math nodes, row sep=0em, column sep=0em]
		{ a && b\\&&\\&c&\\&d&\\e&&f\\};				
		\end{tikzpicture}
	\end{center}
	if the numbers fulfill the conditions
	\begin{align*}
	a\geq c\geq \left\{\begin{tabular}{c}$b$\\$f$\end{tabular}\right\}\hspace{10pt}\text{ and }\hspace{10pt} & c \leq a+b+f,\\
	e\geq d\geq \left\{\begin{tabular}{c}$b$\\$f$\end{tabular}\right\}\hspace{10pt}\text{ and }\hspace{10pt} & d \leq e+b+f.
	\end{align*}
	Notice and beware of the asymmetry in this notation! We do not for example require $a\geq d$!
\end{nota}

We can now define a modified version of $\Dn$-Gelfand-Tsetlin patterns.

\begin{defi}[Tweaked Gelfand-Tsetlin Patterns in Type $\Dn$]
	Let $G=\SO_{2n}$ and $\lambda = \sum_{i=1}^{n}\lambda_i \epsilon_i\in\Lambda^+$. A \textbf{tweaked Gelfand-Tsetlin pattern} of type $\lambda$ is a pair $(\mathbf{y}, \mathbf{z})$ of tuples $\mathbf{y} = (y_{i,j}) \in\RR^{\frac{n(n-1)}{2}}$, $2\leq i \leq n$, $i\leq j\leq n$, and 
	\[\mathbf{z} = (z_{1,1}, \ldots, z_{1,n-2}, z_{1,n-1}^\uparrow, z_{1,n-1}^\downarrow, \ldots, z_{n-2,n-2}, z_{n-2,n-1}^\uparrow, z_{n-2,n-1}^\downarrow, z_{n-1,n-1})\]such that
	\begin{align}\label{eq:tweak}
	y_{i,n-1}-y_{i+1,n-1} = z_{i,n-1}^\uparrow- z_{i,n-1}^\downarrow  \hspace{10pt} \text{ for all }1\leq i\leq n-2
	\end{align}
	and the coordinates fulfill the relations in \cref{fig:tweak}.
	\begin{figure}[htb]\centering
		\caption{Inequalities of tweaked Gelfand-Tsetlin patterns.}\label{fig:tweak}
		\begin{tikzpicture}
		\matrix (m) [matrix of nodes,align=center,row sep={4.5ex,between origins},column sep={2.5em,between origins},minimum width=2em]
		{
			$\lambda_1$&&$\lambda_2$ &&$\ldots$&&$\lambda_{n-3}$&&$\lambda_{n-2}$ && $\lambda_{n-1}$&&$\lambda_n$\\
			&$z_{1,1}$&&$z_{1,2}$&&$\ldots$&&$z_{1,n-3}$&&$z_{1,n-2}$&&\begin{tabular}{c}$z_{1,n-1}^\uparrow$\\$z_{1,n-1}^\downarrow$\end{tabular}&\\
			&&$y_{2,2}$&&$y_{2,3}$&&$\ldots$&&$y_{2,n-2}$&&$y_{2,n-1}$&&$y_{2,n}$\\
			&&&$z_{2,2}$&&$z_{2,3}$&&$\ldots$&&$z_{2,n-2}$&&\begin{tabular}{c}$z_{2,n-1}^\uparrow$\\$z_{2,n-1}^\downarrow$\end{tabular}&\\
			&&&&$y_{3,3}$&&$y_{3,4}$&&$\ldots$&&$y_{3,n-1}$&&$y_{3,n}$\\
			&&&&&$z_{3,3}$&&$z_{3,4}$&&$\ldots$&&\begin{tabular}{c}$z_{3,n-1}^\uparrow$\\$z_{3,n-1}^\downarrow$\end{tabular}&\\
			&&&&&&$\ddots$&&$\ddots$&&$\ddots$&&&\\
			&&&&&&&$z_{n-3,n-3}$&&$z_{n-3,n-2}$&&\begin{tabular}{c}$z_{n-3,n-1}^\uparrow$\\$z_{n-3,n-1}^\downarrow$\end{tabular}&\\
			&&&&&&&&$y_{n-2,n-2}$&&$y_{n-2,n-1}$&&$y_{n-1,n}$\\
			&&&&&&&&&$z_{n-2,n-2}$&&\begin{tabular}{c}$z_{n-2,n-1}^\uparrow$\\$z_{n-2,n-1}^\downarrow$\end{tabular}&\\
			&&&&&&&&&&$y_{n-1,n-1}$&&$y_{n,n}$\\
			&&&&&&&&&&&$z_{n-1,n-1}$&\\
			&&&&&&&&&&&&$y_{n,n}$\\
		};
		\end{tikzpicture}\end{figure}
	
	To simplify notation we will sometimes write $z_{n-1,n-1}^\uparrow$ for $z_{n-1,n-1}$.
\end{defi}

As in the usual definition, these patterns will define a polytope.

\begin{defi}
	Let $G=\SO_{2n}$ and $\lambda\in\Lambda^+$. The \textbf{tweaked Gelfand-Tsetlin polytope} $\widetilde{GT}(\lambda)$ is defined as the set of all tweaked Gelfand-Tsetlin patterns of type $\lambda$.
\end{defi}

The relation between usual Gelfand-Tsetlin patterns and tweaked Gelfand-Tsetlin patterns is given by the following observation.

Let $\V_\lambda$ denote the linear subspace of $\RR^{n(n-1)+(n-2)}$ defined by the relations in \cref{eq:tweak}.

\begin{theorem}\label{thm:tweak}
	For every $\lambda\in\Lambda^+$ there exists a bijection $\psi_\lambda\colon\V_\lambda\to\RR^{n(n-1)} $ given by $(z_{i,n-1}^\uparrow, z_{i,n-1}^\downarrow) \mapsto \min\{z_{i,n-1}^\uparrow, z_{i,n-1}^\downarrow\}$ for all $1\leq i\leq n-2$ and identity on the other coordinates. Its inverse is given by 
	\begin{align*}
	z_{i,n-1}^\uparrow &:= z_{i,n-1} + y_{i,n-1} - \min\{y_{i,n-1},y_{i+1,n-1}\}\\
	z_{i,n-1}^\downarrow &:= z_{i,n-1} + y_{i+1,n-1} - \min\{ y_{i,n-1},y_{i+1,n-1} \}
	\end{align*}
	for every $1\leq i\leq n-2$ and identity on the other coordinates. Furthermore, $\psi_\lambda$ induces a bijection between the tweaked Gelfand-Tsetlin patterns of type $\lambda$ and the usual Gelfand-Tsetlin patterns of type $\lambda$.
\end{theorem}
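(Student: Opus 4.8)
The plan is to verify that the two maps defined in the statement are mutually inverse and that each restricts to a bijection on the level of patterns. First I would check that $\psi_\lambda$ is well-defined, i.e. that the formula $(z_{i,n-1}^\uparrow, z_{i,n-1}^\downarrow) \mapsto \min\{z_{i,n-1}^\uparrow, z_{i,n-1}^\downarrow\}$ together with the identity on all other coordinates sends $\V_\lambda$ into $\RR^{n(n-1)}$, which is immediate since we simply drop the $(n-2)$ redundant coordinates. For the inverse direction, I would plug the stated formulas for $z_{i,n-1}^\uparrow$ and $z_{i,n-1}^\downarrow$ into the defining relation \cref{eq:tweak}: one computes $z_{i,n-1}^\uparrow - z_{i,n-1}^\downarrow = y_{i,n-1} - y_{i+1,n-1}$ directly, so the image lands in $\V_\lambda$. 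Then the two round-trip computations are routine: starting from $z_{i,n-1}$, applying the inverse and then taking the minimum of the two new coordinates recovers $z_{i,n-1}$ because $\min\{y_{i,n-1},y_{i+1,n-1}\}$ cancels; conversely, starting from a point of $\V_\lambda$, the relation \cref{eq:tweak} forces $\min\{z_{i,n-1}^\uparrow, z_{i,n-1}^\downarrow\}$ to equal whichever of the two differs from $z_{i,n-1}$ by the smaller of $y_{i,n-1} - \min$ and $y_{i+1,n-1} - \min$, and one of those is zero, so the reconstruction is exact. This establishes that $\psi_\lambda$ is a bijection of the ambient spaces.

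The substantive part is the claim that $\psi_\lambda$ matches tweaked patterns with usual patterns. Here I would argue inequality by inequality, using the notational device introduced just before the definition (the diamond-shaped array of six numbers $a,b,c,d,e,f$ with its asymmetric conditions). The only inequalities where the two polytopes differ are those involving $z_{i,n-1}$ in the usual pattern versus $z_{i,n-1}^\uparrow, z_{i,n-1}^\downarrow$ in the tweaked one; all other rows of \cref{fig:gtd} and \cref{fig:tweak} coincide verbatim, so they transport trivially under the identity on those coordinates. For the relevant rows, I would show that the pair of tweaked inequalities
$$
y_{i,n-1} \geq z_{i,n-1}^\uparrow \geq \left\{ y_{i,n},\, y_{i+1,n} \right\}, \qquad y_{i+1,n-1} \geq z_{i,n-1}^\downarrow \geq \left\{ y_{i,n},\, y_{i+1,n} \right\}
$$
(together with the two four-term conditions packaged in the diamond notation and the defining relation \cref{eq:tweak}) are equivalent, under the substitution $z_{i,n-1} = \min\{z_{i,n-1}^\uparrow, z_{i,n-1}^\downarrow\}$, to the single usual inequality $z_{i,n-1} \leq y_{i,n} + y_{i+1,n} + \min\{y_{i,n-1}, y_{i+1,n-1}\}$ plus the usual interlacing $y_{i,n-1} \geq z_{i,n-1} \geq \{y_{i,n}, y_{i+1,n}\}$ and $y_{i+1,n-1} \geq z_{i,n-1}$. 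The key point is that, because $z_{i,n-1}^\uparrow$ and $z_{i,n-1}^\downarrow$ differ from $z_{i,n-1}$ by $y_{i,n-1} - \min$ and $y_{i+1,n-1} - \min$ respectively, exactly one of them equals $z_{i,n-1}$ itself and the other is larger; the upper bound for the larger one is precisely the four-term bound, which is exactly how the diamond notation is set up to encode.

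I expect the main obstacle to be bookkeeping the asymmetric diamond conditions carefully and making sure that no inequality is lost or spuriously added when passing between the one-coordinate and two-coordinate descriptions — in particular checking that the redundant coordinate introduced by \cref{eq:tweak} does not secretly impose an extra constraint on a genuine tweaked pattern, and conversely that reconstructing $z_{i,n-1}^\uparrow$ and $z_{i,n-1}^\downarrow$ from a usual pattern always yields a point satisfying all of \cref{fig:tweak}. Once the equivalence of the defining inequalities is nailed down coordinate-block by coordinate-block, the bijection of pattern sets follows formally from the bijection of ambient spaces already established, and the proof is complete.
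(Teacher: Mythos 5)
Your proposal is correct and follows essentially the same route as the paper's argument (which the paper delegates to an explicit calculation in the thesis): verify the two maps are mutually inverse, using the cancellation of $\min\{y_{i,n-1},y_{i+1,n-1}\}$ in one direction and the relation defining $\V_\lambda$ in the other, and then compare the defining inequalities block by block, noting that only the $z_{i,n-1}$-inequalities differ and that the two four-term bounds on $z_{i,n-1}^\uparrow$, $z_{i,n-1}^\downarrow$ correspond exactly to the single bound $z_{i,n-1}\leq y_{i,n}+y_{i+1,n}+\min\{y_{i,n-1},y_{i+1,n-1}\}$. The only nitpick is that ``exactly one equals $z_{i,n-1}$ and the other is larger'' should read ``at least one equals it and the other is $\geq$'' (both coincide when $y_{i,n-1}=y_{i+1,n-1}$), which does not affect the argument.
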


The proof is done by explicit calculation. It can be found in \cite[Theorem 6.7.5]{S}.
We can now state an analogue of \cref{thm:stringgtabc} for $\Dn$.

\begin{theorem*}[\cref{thm:stringtweak}]
	Let $G=\SO_{2n}$ and $\lambda=\sum_{i=1}^{n}\lambda_i\epsilon_i\in\Lambda^+$. The map $\widetilde{\phi}_\lambda:=\psi_\lambda^{-1}\circ\phi_\lambda\colon \RR^{n(n-1)} \to \V_\lambda$ is an affine bijection and $\widetilde{\phi}_\lambda(\Q_{\underline{w_0}^\mathrm{std}}(\lambda)) = \widetilde{GT}(\lambda)$.
	Furthermore, $a\in\RR^{n(n-1)}$ is a lattice point if and only if the coordinates of $\widetilde{\phi}_\lambda(a)$\,---\,including the first row $y_{1,j}= \lambda_j$\,---\,are either all integral or all are in $\frac{1}{2}+\ZZ$.
\end{theorem*}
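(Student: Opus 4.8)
The plan is to treat the three assertions in turn; the first two are formal, and the crux — and the main obstacle — is to show that the composite $\widetilde\phi_\lambda$ is genuinely affine rather than merely piecewise affine.

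\emph{Bijectivity and $\widetilde\phi_\lambda(\Q_{\underline{w_0}^\mathrm{std}}(\lambda))=\widetilde{GT}(\lambda)$.} This is immediate. Both $\phi_\lambda$ (\cref{thm:stringgtd}) and $\psi_\lambda$ (\cref{thm:tweak}) are bijections, hence so is $\widetilde\phi_\lambda=\psi_\lambda^{-1}\circ\phi_\lambda\colon\RR^{n(n-1)}\to\V_\lambda$. By \cref{thm:stringgtd} we have $\phi_\lambda(\Q_{\underline{w_0}^\mathrm{std}}(\lambda))=GT_{\Dn}(\lambda)$, and by \cref{thm:tweak} we have $\psi_\lambda(\widetilde{GT}(\lambda))=GT_{\Dn}(\lambda)$, i.e. $\psi_\lambda^{-1}(GT_{\Dn}(\lambda))=\widetilde{GT}(\lambda)$; composing these two facts yields the claim.

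\emph{Affineness.} I would compute $\widetilde\phi_\lambda$ one coordinate at a time, substituting the explicit formulae for $\phi_\lambda$ (those displayed just before \cref{thm:tweak}) into the formulae for $\psi_\lambda^{-1}$. On every coordinate other than $z_{i,n-1}^\uparrow,z_{i,n-1}^\downarrow$ ($1\le i\le n-2$) the map $\psi_\lambda^{-1}$ is the identity and the corresponding $\phi_\lambda$-formula is affine in $a$: the recursion for the $y_{i,j}$ unwinds to $y_{i,j}=\lambda_j+\ell_{i,j}(a)$ with $\ell_{i,j}$ an integral linear form, whence the $z_{i,j}$ with $j\le n-2$ and $z_{n-1,n-1}$ are visibly affine too. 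For the two ``arrow'' coordinates, writing $s_i:=a_{i,n-2}-\overline{a_{i,n-1}}$ and $t_i:=a_{i,n-1}-\overline{a_{i,n-2}}$ and using $u-\min\{u,v\}=\max\{0,u-v\}$, one obtains $z_{i,n-1}^\uparrow=y_{i,n}+\min\{s_i,t_i\}+\max\{0,y_{i,n-1}-y_{i+1,n-1}\}$ and $z_{i,n-1}^\downarrow=y_{i,n}+\min\{s_i,t_i\}+\max\{0,y_{i+1,n-1}-y_{i,n-1}\}$. The key fact is that the minimum inherited from $\phi_\lambda$ and the one inherited from $\psi_\lambda^{-1}$ split along the same locus: a direct computation from the $y$-recursion gives $y_{i,n-1}-y_{i+1,n-1}=t_i-s_i$. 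Feeding this into the elementary identities $\min\{s,t\}+\max\{0,t-s\}=t$ and $\min\{s,t\}+\max\{0,s-t\}=s$ collapses the two expressions to $z_{i,n-1}^\uparrow=y_{i,n}+a_{i,n-1}-\overline{a_{i,n-2}}$ and $z_{i,n-1}^\downarrow=y_{i,n}+a_{i,n-2}-\overline{a_{i,n-1}}$, both affine in $a$ (this simultaneously re-checks \cref{eq:tweak}, confirming that the image lies in $\V_\lambda$). Hence $\widetilde\phi_\lambda$ is affine. This cancellation of the two non-linearities is the one non-formal step; everything else is bookkeeping with the explicit formulae, and I expect verifying the identity $y_{i,n-1}-y_{i+1,n-1}=t_i-s_i$ from the recursion — with due care for the conjugation convention and the boundary indices near $i=n-2$ — to be the place where one has to be careful.

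\emph{The lattice-point criterion.} Affineness is not needed here. By \cref{thm:stringgtd}, a point $a$ of $\Q_{\underline{w_0}^\mathrm{std}}(\lambda)$ is a lattice point if and only if $\phi_\lambda(a)$ is a \emph{standard} $\Dn$-Gelfand-Tsetlin pattern, i.e., by definition, the coordinates of the extended pattern $\widehat{\phi_\lambda(a)}$ — those of $\phi_\lambda(a)$ together with the top row $\lambda_j$ — all lie in $\ZZ$, or all lie in $\frac12+\ZZ$ (in type $\Dn$ the $\lambda_j$ automatically lie in a common coset of $\ZZ$, necessarily $\ZZ$ or $\frac12+\ZZ$). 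It thus suffices to note that $\psi_\lambda$ and $\psi_\lambda^{-1}$ both preserve the property ``all coordinates of the (extended) pattern lie in $c+\ZZ$'' for $c\in\{0,\tfrac12\}$: $\psi_\lambda$ replaces the pair $(z_{i,n-1}^\uparrow,z_{i,n-1}^\downarrow)$ by its minimum, still in $c+\ZZ$, while $\psi_\lambda^{-1}$ replaces $z_{i,n-1}$ by $z_{i,n-1}+y_{i,n-1}-\min\{y_{i,n-1},y_{i+1,n-1}\}$ and by $z_{i,n-1}+y_{i+1,n-1}-\min\{y_{i,n-1},y_{i+1,n-1}\}$, each again in $c+\ZZ$ whenever the inputs are (using $c+c-c\in c+\ZZ$); the remaining coordinates and the top row are untouched. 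Therefore $\widehat{\phi_\lambda(a)}$ is all-integral-or-all-half-integral precisely when $\widetilde\phi_\lambda(a)=\psi_\lambda^{-1}(\phi_\lambda(a))$, together with its top row $y_{1,j}=\lambda_j$, is, which is exactly the asserted criterion.
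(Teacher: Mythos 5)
Your proposal is correct and follows essentially the same route as the paper: bijectivity and the image statement by composing \cref{thm:stringgtd} with \cref{thm:tweak}, and affineness by the explicit coordinate computation in which the minimum from $\phi_\lambda$ and the one from $\psi_\lambda^{-1}$ cancel via the identity $y_{i,n-1}-y_{i+1,n-1}=\bigl(a_{i,n-1}-\overline{a_{i,n-2}}\bigr)-\bigl(a_{i,n-2}-\overline{a_{i,n-1}}\bigr)$, yielding $z_{i,n-1}^\uparrow=y_{i,n}+a_{i,n-1}-\overline{a_{i,n-2}}$ exactly as in the paper. Your treatment of the lattice-point criterion just spells out what the paper dismisses as clear from \cref{thm:stringgtd} and the definition of $\psi_\lambda$.
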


\begin{proof}
	Since $\phi_\lambda$ and $\psi_\lambda^{-1}$ are piecewise affine bijections, the same holds true for $\widetilde{\phi}_\lambda$. Since $\phi_\lambda(\Q_{\underline{w_0}^\mathrm{std}}) = GT_{\Dn}(\lambda)$ by \cref{thm:stringgtd} and $\psi_\lambda(\widetilde{GT}(\lambda)) = GT_{\Dn}(\lambda)$ by \cref{thm:tweak}, we have $\widetilde{\phi}_\lambda(\Q_{\underline{w_0}^\mathrm{std}}(\lambda)) = \widetilde{GT}(\lambda)$. The claim on lattice points is clear from \cref{thm:stringgtd} and the definition of $\psi_\lambda$.
	
	It remains to show that $\widetilde{\phi}_\lambda$ and $\widetilde{\phi}_\lambda^{-1}$ are in fact affine. We only have to check the coordinates $z_{i,n-1}^\uparrow$ and $z_{i,n-1}^\downarrow$ since the map is affine in all other coordinates. Let $x = (\mathbf{y}, \mathbf{z})$ be the image of $(a_{i,j})$ under $\widetilde{\phi}_\lambda$. We calculate
	\begin{align*}
	z_{i,n-1}^\uparrow &= z_{i,n-1} + y_{i,n-1} - \min\{ y_{i,n-1}, y_{i+1,n-1} \}\\
	&= z_{i,n-1} - \min\{ 0, y_{i+1,n-1}-y_{i,n-1} \}\\
	&= y_{i,n} + \min \{ a_{i,n-2} - \overline{a_{i,n-1}}, a_{i,n-1} - \overline{a_{i,n-2}}\}\\
	&\hspace{20pt} - \min \{ 0, a_{i,n-2} - a_{i,n-1} - \overline{a_{i,n-1}}  + \overline{a_{i,n-2}} \}\\
	&= y_{i,n} +  a_{i,n-1} - \overline{a_{i,n-2}}.			
	\end{align*}
	This implies that $z_{i,n-1}^\uparrow$ is actually a linear combination of the coordinates of $(a_{i,j})$ plus $\lambda_n$. The same holds true for $z_{i,n-1}^\downarrow$ by analogous computation. Hence the map $\widetilde{\phi}_\lambda$ is affine, i.e. the concatenation of a linear map and a translation. Since the inverse of a linear map is linear and the inverse of a translation is a translation we know that $\widetilde{\phi}_\lambda^{-1}$ must be affine too. This concludes the proof. 
\end{proof}

\begin{ex}\label{ex:d3tweak}
	Let $G=\SO_6$ and let $\lambda=\sum_{i=1}^n\lambda_i\epsilon_i$. The image of the point $(a,b,c,d,e,f)\in\Q_{\underline{w_0}^\mathrm{std}}(\lambda)$ under the map $\widetilde{\phi}_\lambda$ is drawn in \cref{fig:d3tweak}.
\end{ex}

\begin{figure}[htb]\centering
	\caption{The point $\widetilde{\phi}_\lambda(a,b,c,d,e,f)$ from \cref{ex:d3tweak}.}\label{fig:d3tweak}
	\begin{tikzpicture}
	\matrix (m) [matrix of nodes,align=center,row sep={5ex,between origins},column sep={5em,between origins},minimum width=2em]
	{
		$\lambda_1$&&$\lambda_2$&&$\lambda_3$\\
		&$\lambda_1-e+f$&&\begin{tabular}{c}$\lambda_3+d-f$\\$\lambda_3+c-e$\end{tabular}&\\
		&&$\lambda_2+c-d-e+f$&&$\lambda_3+d-e$\\
		&&&$\lambda_3+a+d-e$&\\
		&&&&$\lambda_3+a-b+d-e$\\
	};
	\end{tikzpicture}
\end{figure}

\section{Tweaked Gelfand-Tsetlin Diagrams}\label{sec:tweakdiagram}

We will now define an analogue of identity diagrams of elements of marked order polytopes for tweaked Gelfand-Tsetlin patterns. For that purpose we want to define a poset that describes as much of the tweaked Gelfand-Tsetlin polytope as possible.

\begin{cons}[Tweaked Gelfand-Tsetlin Poset]
	Let $G=\SO_{2n}$. Let $\GT_n$ be the set of symbols 
	\begin{enumerate}
		\item $\xi_{i,j}$ for $1\leq i \leq n$ and $i\leq j \leq n$,
		\item $\zeta_{i,j}$ for $1\leq i \leq n-2$ and $i\leq j\leq n-2$,
		\item $\zeta_{i,n-1}^\uparrow$ and $\zeta_{i,n-1}^\downarrow$ for $1\leq i \leq n-2$, and
		\item $\zeta_{n-1,n-1}$.
	\end{enumerate}
	For easier notation we sometimes write $\zeta_{n-1,n-1}^\uparrow$ for $\zeta_{n-1,n-1}$. The ordering is given precisely by the inequalities in the definition of tweaked Gelfand-Testlin patterns that do not require addition or subtraction by substituting $\xi$ for $y$ and $\zeta$ for $z$.
\iffalse	
	We define a partial order on $\GT_n$ via
	\begin{enumerate}
		\item $\xi_{1,j} \geq \zeta_{1,j}$ for all $1\leq j\leq n-1$,
		\item $\xi_{1,n-1} \geq \zeta_{1,n-1}^\uparrow$,
		\item $\xi_{i,j} \geq \zeta_{i-1,j}$ and $\xi_{i,j} \geq \zeta_{i,j}$ for all $2\leq i \leq n-2$ and $i\leq j\leq n-2$,
		\item $\xi_{i,n-1} \geq \zeta_{i-1,n-1}^\downarrow$ and $\xi_{i,n-1} \geq \zeta_{i,n-1}^\uparrow$ for all $2\leq i \leq n-2$,
		\item $\xi_{n-1,n-1} \geq \zeta_{n-2,n-1}^\downarrow$ and $\xi_{n-1,n-1} \geq \zeta_{n-1,n-1}$,
		\item $\zeta_{i,j} \geq \xi_{i,j+1}$ and $\zeta_{i,j} \geq \xi_{i+1,j+1}$ for all $1\leq i \leq n-2$ and $i\leq j\leq n-2$,
		\item $\zeta_{i,n-1}^\uparrow \geq \xi_{i,n}$ and $\zeta_{i,n-1}^\uparrow \geq \xi_{i+1,n}$ for all $1\leq i\leq n-2$,
		\item $\zeta_{i,n-1}^\downarrow \geq \xi_{i,n}$ and $\zeta_{i,n-1}^\downarrow \geq \xi_{i+1,n}$ for all $1\leq i\leq n-2$, and
		\item $\zeta_{n-1,n-1} \geq \xi_{n-1,n}$ and $\zeta_{n-1,n-1}\geq \xi_{n,n}$.
	\end{enumerate}
	
	The poset $\GT_n$ will be called the \textbf{tweaked Gelfand-Tsetlin poset}. 
	
	We usually call the coordinates of $x\in\RR^{\GT_n}$ by the coordinates of the respective Gelfand-Tsetlin patterns like $x_{\xi_{i,j}} =: y_{i,j}$ and $x_{\zeta_{i,j}} =: z_{i,j}$.
\fi
\end{cons}

Notice that this describes almost all relations defining tweaked Gelfand-Tsetlin patterns. The ones missing are the four-term relations of type 
\begin{align*}
z_{i,n-1}^\uparrow - z_{i,n-1}^\downarrow &= y_{i,n-1} - y_{i+1,n-1} ,\\
z_{i,n-1}^\uparrow &\leq y_{i,n-1} + y_{i,n} + y_{i+1,n}\text{ and }\\
z_{i,n-1}^\downarrow &\leq y_{i+1,n-1} + y_{i,n} + y_{i+1,n}.
\end{align*}
We will return to them later.

\begin{ex}
	The Hasse diagram of the tweaked Gelfand-Tsetlin poset $\GT_4$ is depicted in \cref{fig:gt4}.
	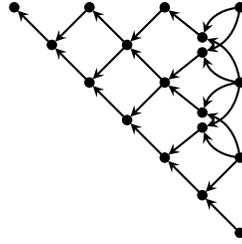
\begin{figure}[htb]\centering
		\caption{Hasse diagram of the tweaked Gelfand-Tsetlin poset $\GT_4$.}\label{fig:gt4}
		\begin{tikzpicture}[>=stealth, every node/.prefix style={draw=black,circle, fill, minimum size=.125cm, inner sep=0pt, outer sep=0pt},scale=1]
		%	\foreach \Point in {(0,0), (1,0), (2,0), (3,0), (0.5,-0.5), (1.5,-0.5), (2.5,-0.5), (1,-1), (2, -1), (3, -1), (1.5,-1.5), (2.5,-1.5), (2,-2), (3,-2), (2.5,-2.5)}
		%		\draw node[fill=black];
		\draw
		(0,0) node (1) {}
		(1,0) node (2) {}
		(2,0) node (3) {}
		(3,0) node (4) {}
		(0.5,-0.5) node (5) {}
		(1.5,-0.5) node (6) {}
		(2.5,-0.4) node (7) {}
		(2.5,-0.6) node (77) {}
		(1,-1) node (8) {}
		(2,-1) node (9) {}
		(3,-1) node (10) {}
		(1.5,-1.5) node (11) {}
		(2.5,-1.4) node (12) {}
		(2.5,-1.6) node (1212) {}
		(2,-2) node (13) {}
		(3,-2) node (14) {}
		(2.5,-2.5) node (15) {}
		(3,-3) node (16) {};
		\draw[thick,->] 
		(7) edge (3)
		(3) edge (6)
		(6) edge (2)
		(2) edge (5)
		(5) edge (1)
		(77) edge (9) 
		(9) edge (6)
		(6) edge (8)
		(8) edge (5)
		(12) edge (9)
		(9) edge (11) 
		(11) edge (8)
		(1212) edge (13) 
		(13) edge (11)
		(14) edge (15)
		(15) edge (13)
		(16) edge (15)
		;
		% additional arrows
		\draw[thick,->,bend right]
		(4) edge (7)
		(10) edge (7) 
		(10) edge (12)
		(14) edge (12)
		;
		\draw[thick,->,bend left]
		(4) edge (77)
		(10) edge (77) 
		(10) edge (1212)
		(14) edge (1212)
		;
		\end{tikzpicture}\end{figure}
\end{ex}

We would like to define an analogue of the marked order polytope. For this we will use the following non-standard definition.

\begin{defi}
	A \textbf{pseudo-marking} on a poset $P$ is a pair $(A,\lambda)$ where $A$ is a subset of $P$ and $\lambda = (\lambda_a)_{a\in A}\in \RR^A$ is a real vector such that $\lambda_a \leq \lambda_b$ whenever $a\leq b$. The triplet $(P,A,\lambda)$ is called a \textbf{pseudo-marked poset}. We will call the elements of $A$ \textbf{marked elements}.
\end{defi}

Notice that in contrast to \cref{defi:markedorder} we do not require that all minimal and all maximal elements of the poset are marked. As a consequence, the following definition might not give a polytope.

\begin{defi}
	Let $(P,A,\lambda)$ be a pseudo-marked poset. The \textbf{marked order polyhedron} $\O_{P,A}(\lambda)$ associated to $(P,A,\lambda)$ is defined as
	\[ \O_{P,A}(\lambda) := \left\{ x\in\RR^{P\setminus A} \,\,\middle|\,\, \begin{tabular}{l}
	$x_p \leq x_q \text{ for all } p\leq q,$\\
	$\lambda_a \leq x_p \text{ for all }a\leq p,$\\
	$x_p \leq \lambda_b \text{ for all } p\leq b$
	\end{tabular} \right\}.\]
\end{defi}

The following observation is clear by construction.

\begin{prop}\label{prop:gtorder}
	Let $G=\SO_{2n}$. Set $A := \{ \xi_{1,1},\ldots,\xi_{1,n} \}\subseteq \GT_n$. Let $\lambda\in\RR^A$ and denote by $\tilde{\lambda} := \sum_{i=1}^{n}\lambda_{\xi_{1,i}}\epsilon_i$ the associated weight (not necessarily dominant nor integral). Then
	\[ \widetilde{GT}(\tilde{\lambda}) := \O_{\GT_n,A}(\lambda)\cap\V_{\tilde{\lambda}}\cap\left\{\begin{tabular}{c}
	$z_{i,n-1}^\uparrow \leq y_{i,n-1} + y_{i,n} + y_{i+1,n}$\\ $z_{i,n-1}^\downarrow \leq y_{i+1,n-1} + y_{i,n} + y_{i+1,n} $
	\end{tabular}\,\,\middle|\,\,1\leq i\leq n-1\right\}.  \]
\end{prop}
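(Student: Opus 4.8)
The plan is to verify \cref{prop:gtorder} by simply unravelling the two definitions in play and checking that the systems of inequalities (and equalities) coincide as subsets of the appropriate Euclidean space. Recall that $\widetilde{GT}(\tilde\lambda)$ is defined as the set of pairs $(\mathbf y,\mathbf z)$ satisfying \cref{eq:tweak} together with the chain inequalities in \cref{fig:tweak}, while by construction the partial order on $\GT_n$ was declared to be ``precisely the inequalities in the definition of tweaked Gelfand--Tsetlin patterns that do not require addition or subtraction.'' So the work is to confirm that this last phrase has been set up consistently and that the three families of missing relations are exactly the ones reinserted by hand in the statement.

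First I would fix the pseudo-marking: take $A=\{\xi_{1,1},\ldots,\xi_{1,n}\}$ and, given $\lambda\in\RR^A$, identify $\lambda_{\xi_{1,i}}$ with $\lambda_i$, so that the associated weight is $\tilde\lambda=\sum_i\lambda_i\epsilon_i$. With this identification, a point of $\RR^{\GT_n\setminus A}$ is exactly a pair $(\mathbf y,\mathbf z)$ in the coordinates $y_{i,j}$ ($2\le i\le n$, $i\le j\le n$) and $z_{i,j}$, $z_{i,n-1}^\uparrow$, $z_{i,n-1}^\downarrow$, $z_{n-1,n-1}$ appearing in the tweaked pattern. I would then go row by row through \cref{fig:tweak} and observe that each entry in the picture being $\geq$ its lower-left and lower-right neighbour is, under the substitution $\xi\leftrightarrow y$, $\zeta\leftrightarrow z$ and the convention $y_{1,j}=\lambda_j$, precisely a covering relation of the poset $\GT_n$ as listed in the construction (the commented-out enumeration (i)--(ix) is the explicit check). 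The point worth making carefully is the treatment of the first row: since $y_{1,j}=\lambda_j$ corresponds to the marked element $\xi_{1,j}\in A$, the inequalities $\lambda_j\geq z_{1,j}$ etc.\ are exactly the constraints ``$x_p\le\lambda_b$ for $p\le b$'' in the definition of $\O_{\GT_n,A}(\lambda)$, and the bottom-row relations like $z_{n-1,n-1}\ge y_{n-1,n}$, $z_{n-1,n-1}\ge y_{n,n}$ are ordinary ``$x_p\le x_q$'' relations. Hence $\O_{\GT_n,A}(\lambda)$ cuts out exactly the subset of $\RR^{\GT_n\setminus A}$ defined by all of the inequalities in \cref{fig:tweak}, and nothing else.

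Next I would enumerate precisely which relations from the tweaked-pattern definition are \emph{not} of this ``no arithmetic'' form. By the remark following the construction of $\GT_n$, these are exactly the three families: the defining equalities $z_{i,n-1}^\uparrow-z_{i,n-1}^\downarrow=y_{i,n-1}-y_{i+1,n-1}$ of \cref{eq:tweak} (which define the subspace $\V_{\tilde\lambda}$), and the two inequalities $z_{i,n-1}^\uparrow\le y_{i,n-1}+y_{i,n}+y_{i+1,n}$ and $z_{i,n-1}^\downarrow\le y_{i+1,n-1}+y_{i,n}+y_{i+1,n}$ coming from the asymmetric box notation, for $1\le i\le n-2$ (and for $i=n-1$ the relations $z_{n-1,n-1}\ge y_{n-1,n}$, $z_{n-1,n-1}\ge y_{n,n}$, $z_{n-1,n-1}\le y_{n-1,n-1}+y_{n-1,n}+y_{n,n}$ — but note the first two of these are plain order relations already in $\GT_n$, and the third should be compared against the statement's constraint set, where the upper limit of $i$ is $n-1$ under the convention $z_{n-1,n-1}^\uparrow=z_{n-1,n-1}$). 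Thus intersecting $\O_{\GT_n,A}(\lambda)$ with $\V_{\tilde\lambda}$ and with the displayed box of inequalities reinstates exactly the missing relations, so the intersection equals $\widetilde{GT}(\tilde\lambda)$. Since every containment is an equality of solution sets of the same finite list of affine (in)equalities, this proves the proposition.

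The proof is genuinely just bookkeeping, so there is no real obstacle, but the step requiring the most care is the correct handling of the two conventions: $y_{1,j}=\lambda_j$ (so the top row is ``marked'', not part of $P\setminus A$) and $z_{n-1,n-1}^\uparrow=z_{n-1,n-1}$ with the index range of $i$ in the added inequalities running to $n-1$ rather than $n-2$. One must check that with the $i=n-1$ convention the inequality $z_{n-1,n-1}^\uparrow\le y_{n-1,n-1}+y_{n-1,n}+y_{n,n}$ is indeed the remaining non-combinatorial relation for the bottom box and is correctly recovered, and that no additional four-term relation (e.g.\ the ``$c\le a+b+f$''-type inequalities further up the diagram, if any) has been overlooked; inspecting \cref{fig:tweak} confirms that the only boxes are in the last $z$-column, so the list is complete.
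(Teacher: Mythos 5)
Your proposal is correct and matches the paper's approach: the paper dismisses \cref{prop:gtorder} with ``clear by construction,'' and your proof supplies exactly the bookkeeping that makes that dismissal justified, namely identifying the coordinates of $\RR^{\GT_n\setminus A}$ with tweaked GT coordinates under $\xi\leftrightarrow y$, $\zeta\leftrightarrow z$, $y_{1,j}=\lambda_j$, checking that $\O_{\GT_n,A}(\lambda)$ encodes precisely the ``no arithmetic'' chain relations of \cref{fig:tweak}, and observing that the three remaining families of four-term relations are exactly $\V_{\tilde\lambda}$ together with the displayed box. Your care with the boundary conventions (the marked top row and $z_{n-1,n-1}^\uparrow = z_{n-1,n-1}$, with the $\downarrow$-inequality vacuous at $i=n-1$) is the only nontrivial point and you handle it correctly.
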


Notice that we were a bit sloppy with our notation here since $\widetilde{GT}(\tilde{\lambda})$, $\O_{\GT_n,A}(\lambda)$ and $\V_{\tilde{\lambda}}$ are defined as subsets of $\RR^{\GT_n\setminus A}$. So we want to understand the fourth polyhedron as a subset of $\RR^{\GT_n\setminus A}$ too. Additionally, we simplify notation by using $\lambda$ for the weight and for the marking simultaneously.

\begin{rem}
	Because of the defining relations of $\V_\lambda$ it is clear that the two inequalities $z_{i,n-1}^\uparrow \leq y_{i,n-1} + y_{i,n} + y_{i+1,n}$ and $z_{i,n-1}^\downarrow \leq y_{i+1,n-1} + y_{i,n} + y_{i+1,n} $ are in fact equivalent. So when checking whether a given point is a tweaked Gelfand-Tsetlin pattern, it is sufficient to just verify one of those inequalities for every $i$. Furthermore, if one of these inequalities happens to be an equality, the other one will be too.
\end{rem}

We will now define an analogue of identity diagrams for these special posets.

\begin{cons}
	Let $(A,\lambda)$ be the pseudo-marking on the tweaked Gelfand-Tsetlin poset $\GT_n$ from \cref{prop:gtorder} and let $x\in \widetilde{GT}(\lambda)$. Let $\mathcal{H}_n$ denote the Hasse diagram of $\GT_n$. The \textbf{tweaked Gelfand-Tsetlin pre-diagram} $\pre\D_{\Dn}^\lambda(x)$ associated to $x$ is the colored directed graph whose nodes are labeled by the elements of the tweaked Gelfand-Tsetlin poset and whose arrows are given by the following construction.
	\begin{enumerate}
		\item Add a {\em black} arrow $p\to q$ if there exists an arrow $p\to q$ in $\mathcal{H}_n$ between the corresponding nodes.
		\item Add a {\em black} arrow $p \to q$ if there exists an opposite arrow $q\to p$ in $\mathcal{H}_n$ between the corresponding nodes and $x_p = x_q$.
		\item For every $1\leq i\leq n-1$ add six (only three if $i=n-1$) {\em red} arrows
		\begin{center}
			\begin{tikzpicture}[>=stealth]
			\draw
			(0,0) node (y1) {$\xi_{i,n-1}$}
			(0,-1) node (y11) {$\xi_{i+1,n-1}$}
			(2,0) node (zup) {$\zeta_{i,n-1}^\uparrow$}
			(2,-1) node (zdown) {$\zeta_{i,n-1}^\downarrow$}			
			(4,0) node (y2) {$\xi_{i,n}$}
			(4,-1) node (y22) {$\xi_{i+1,n}$};
			\draw[->,dashed]
			(y1) edge (zup)
			(zup) edge (y2) 
			(zup) edge (y22)
			(y11) edge (zdown)
			(zdown) edge (y2)
			(zdown) edge (y22);
			\end{tikzpicture}
		\end{center} if $z_{i,n-1}^\uparrow = y_{i,n-1} + y_{i,n} + y_{i+1,n}$ (or equivalently $z_{i,n-1}^\downarrow = y_{i+1,n-1} + y_{i,n} + y_{i+1,n}$).
	\end{enumerate}
\end{cons}

For reasons of readability we will always draw {\em red} arrows as black but dashed arrows. This diagram can be further simplified for our purposes. The reason is the following observation.

\begin{rem}
	Let $x$ be a tweaked Gelfand-Tsetlin pattern. Let $1\leq i\leq n-1$ be an index such that $z_{i,n-1}^\uparrow = y_{i,n-1} + y_{i,n} + y_{i+1,n}$. Then the following implications hold.
	\begin{enumerate}
		\item $y_{i,n-1} = y_{i,n} = y_{i+1,n}$ $\Rightarrow$ $y_{i,n-1} = y_{i,n} = y_{i+1,n} = z_{i,n-1}^\uparrow = 0$.
		\item $z_{i,n-1}^\uparrow = y_{i,n-1}$ $\Rightarrow$ $y_{i,n} = - y_{i+1,n}$.
		\item $z_{i,n-1}^\uparrow = y_{i,n}$ $\Rightarrow$ $y_{i,n-1} = - y_{i+1,n}$.
		\item $z_{i,n-1}^\uparrow = y_{i+1,n}$ $\Rightarrow$ $y_{i,n-1} = - y_{i,n}$.
	\end{enumerate}
	The analogue statements hold true for $z_{i,n-1}^\downarrow$ and $y_{i+1,n-1}$.
\end{rem}

We will use these observations to adapt our pre-diagrams. The goal is to indicate whether two entries must be additive inverses of each other because of an equation of the form $z_{i,n-1}^\uparrow = y_{i,n-1} + y_{i,n} + y_{i+1,n}$ or $z_{i,n-1}^\downarrow = y_{i+1,n-1} + y_{i,n} + y_{i+1,n}$.

\begin{cons}
	Let $x$ be a tweaked Gelfand-Tsetlin pattern and let $\pre\D_{\Dn}^\lambda(x)$ be its tweaked Gelfand-Tsetlin pre-diagram. We will replace {\em red} arrows as follows.
	
	For all triplets of {\em red} arrows 
	\begin{center}
		\begin{tikzpicture}[>=stealth]
		\draw
		(0,0) node (p) {$p$}
		(1.5,0) node (q) {$q$}
		(3,0.5) node (r) {$r$}
		(3,-0.5) node (s) {$s$};
		\draw[->,dashed]
		(p) edge (q)
		(q) edge (r) 
		(q) edge (s);
		\end{tikzpicture}
	\end{center}
	do the following replacements. (We do not specify whether $r=\xi_{i,n}$ and $s=\xi_{i+1,n}$ or $r=\xi_{i+1,n}$ and $s=\xi_{i,n}$. Both possibilities are allowed!)
	
	If there exist {\em black} arrows $p\to q$, $q\to r$ and $q\to s$, delete all three red arrows but color the four nodes differently. By default we will call every node {\em black}. But in this case we chose to color these nodes {\em white} instead. We will draw them as empty circles, while our normal {\em black} nodes are filled circles.
	
	For all remaining triplets of {\em red} arrows do the following two replacements if possible. If both are applicable to a certain triplet, do only one (it does not matter which one, though the first one is usually preferred due to readability).
	\begin{center}
		\begin{tikzpicture}[>=stealth]
		\draw
		(0,0) node (p) {$p$}
		(1.5,0) node (q) {$q$}
		(3,0.5) node (r) {$r$}
		(3,-0.5) node (s) {$s$};
		\draw[->,dashed]
		(q) edge (r) 
		(q) edge (s);
		\draw[->,dashed,bend right]
		(p) edge (q);
		\draw[->,bend left]
		(p) edge (q);
		\draw
		(4,0) node[minimum size=0em,inner sep=0pt, outer sep=0pt] (l) {}
		(4.5,0) node[minimum size=0em,inner sep=0pt, outer sep=0pt] (m) {}
		(5,0) node[minimum size=0em,inner sep=0pt, outer sep=0pt] (right) {}
		(l) edge[bend left] (m)
		(m) edge[->,bend right] (right);
		\draw
		(6,0) node (pp) {$p$}
		(7.5,0) node (qq) {$q$}
		(9,0.5) node (rr) {$r$}
		(9,-0.5) node (ss) {$s$};
		\draw[->]
		(pp) edge (qq);
		\draw[->,dashed,bend left]
		(rr) edge (ss)
		(ss) edge (rr);				
		\end{tikzpicture}
		
		and
		
		\begin{tikzpicture}[>=stealth]
		\draw
		(0,0) node (p) {$p$}
		(1.5,0) node (q) {$q$}
		(3,0.5) node (r) {$r$}
		(3,-0.5) node (s) {$s$};
		\draw[->,dashed]
		(p) edge (q) 
		(q) edge (s);
		\draw[->,dashed,bend right]
		(q) edge (r);
		\draw[->,bend left]
		(q) edge (r);
		\draw
		(4,0) node[minimum size=0em,inner sep=0pt, outer sep=0pt] (l) {}
		(4.5,0) node[minimum size=0em,inner sep=0pt, outer sep=0pt] (m) {}
		(5,0) node[minimum size=0em,inner sep=0pt, outer sep=0pt] (right) {}
		(l) edge[bend left] (m)
		(m) edge[->,bend right] (right);
		\draw
		(6,0) node (pp) {$p$}
		(7.5,0) node (qq) {$q$}
		(9,0.5) node (rr) {$r$}
		(9,-0.5) node (ss) {$s$};
		\draw[->]
		(qq) edge (rr);
		\draw[->,dashed,bend left]
		(ss.210) to (pp.330);	
		\draw[->,dashed,bend right]
		(pp.360) to (ss.180);
		\end{tikzpicture}
	\end{center}
	
	Our replacement procedure could have produced {\em red} triangles. We will replace those as follows.
	
	\begin{center}
		\begin{tikzpicture}[>=stealth]
		\draw
		(0,0) node (p) {$p$}
		(1.5,0) node (q) {$q$}
		(3,0.5) node (r) {$r$}
		(3,-0.5) node (s) {$s$};
		\draw[->,dashed]
		(p) edge (q)
		(q) edge (r) 
		(q) edge (s);
		\draw[->,dashed,bend left]
		(r) edge (s)
		(s) edge (r);
		\draw
		(4,0) node[minimum size=0em,inner sep=0pt, outer sep=0pt] (l) {}
		(4.5,0) node[minimum size=0em,inner sep=0pt, outer sep=0pt] (m) {}
		(5,0) node[minimum size=0em,inner sep=0pt, outer sep=0pt] (right) {}
		(l) edge[bend left] (m)
		(m) edge[->,bend right] (right);
		\draw
		(6,0) node (pp) {$p$}
		(7.5,0) node (qq) {$q$}
		(9,0.5) node (rr) {$r$}
		(9,-0.5) node (ss) {$s$};
		\draw[->]
		(pp) edge (qq);
		\draw[->,dashed,bend left]
		(rr) edge (ss)
		(ss) edge (rr);				
		\end{tikzpicture}
		
		and
		
		\begin{tikzpicture}[>=stealth]
		\draw
		(0,0) node (p) {$p$}
		(1.5,0) node (q) {$q$}
		(3,0.5) node (r) {$r$}
		(3,-0.5) node (s) {$s$};
		\draw[->,dashed]
		(p) edge (q) 
		(q) edge (r)
		(q) edge (s);
		\draw[->,dashed,bend left]
		(s.210) to (p.330);	
		\draw[->,dashed,bend right]
		(p.360) to (s.180);
		\draw
		(4,0) node[minimum size=0em,inner sep=0pt, outer sep=0pt] (l) {}
		(4.5,0) node[minimum size=0em,inner sep=0pt, outer sep=0pt] (m) {}
		(5,0) node[minimum size=0em,inner sep=0pt, outer sep=0pt] (right) {}
		(l) edge[bend left] (m)
		(m) edge[->,bend right] (right);
		\draw
		(6,0) node (pp) {$p$}
		(7.5,0) node (qq) {$q$}
		(9,0.5) node (rr) {$r$}
		(9,-0.5) node (ss) {$s$};
		\draw[->]
		(qq) edge (rr);
		\draw[->,dashed,bend left]
		(ss.210) to (pp.330);	
		\draw[->,dashed,bend right]
		(pp.360) to (ss.180);
		\end{tikzpicture}
	\end{center}
	
	Finally, for every $1\leq i\leq n-1$ we will add a pair of {\em black} arrows $\xi_{i,n-1} \leftrightarrows \xi_{i+1,n-1}$ if one of the following two conditions hold.
	\begin{enumerate}
		\item There exist two {\em black} arrows $\zeta_{i,n-1}^\uparrow \to \xi_{i,n}$ and $\zeta_{i,n-1}^\downarrow \to \xi_{i,n}$.
		\item There exist two {\em black} arrows $\zeta_{i,n-1}^\downarrow \to \xi_{i+1,n}$ and $\zeta_{i,n-1}^\downarrow \to \xi_{i+1,n}$.
	\end{enumerate}
	
	The resulting graph is called the \textbf{tweaked Gelfand-Tsetlin diagram} $\D_{\Dn}^\lambda(x)$ of $x$.
\end{cons}

\begin{rem}
	It is clear by construction that $x_p = x_q$ whenever there exists a pair of {\em black} arrows $p\to q$ and $q\to p$. Analogously, $x_p = -x_q$ whenever there exists a pair of {\em red} arrows $p \stackrel{red}{\to} q$ and $q \stackrel{red}{\to}p$. Additionally for every subgraph
	\begin{center}
		\begin{tikzpicture}[>=stealth]
		\draw
		(0,0) node (p) {$p$}
		(1.5,0) node (q) {$q$}
		(3,0.5) node (r) {$r$}
		(3,-0.5) node (s) {$s$};
		\draw[->,dashed]
		(p) edge (q)
		(q) edge (r) 
		(q) edge (s);
		\end{tikzpicture}
	\end{center}
	we have $x_q = x_p+x_r+x_s$. So our construction really visualizes which of the defining inequalities of $\widetilde{GT}(\lambda)$ are actually equalities for the pattern $x$.
	
	Furthermore, $x_p = 0$ for every {\em white} node $p$. The reason is the following. Every white node is part of a quadruplet $\{p,q,r,s\}$ such that $x_p=x_q=x_r=x_s$ and $x_q= x_p+x_r+x_s$ (after proper renaming). But this implies that $x_q = 3x_q$ and hence $0=x_q=x_p=x_q=x_r=x_s$.
\end{rem}

For readability of our drawings, sometimes we do not draw single arrows (we will remember their existence from the positions of the nodes), replace {\em black} double arrows $p\to q$ and $q\to p$ by a straight line and represent {\em red} double arrows $p\stackrel{red}{\to} q$ and $q \stackrel{red}{\to} p$ by a double straight line. We also usually omit {\em red} double arrows between white nodes, since they do not contain any new information. Single {\em red} arrows will be drawn as a dashed line.

\begin{defi}
	Let $x$ be a tweaked Gelfand-Tsetlin pattern and $\D_{\Dn}^\lambda(x)$ its tweaked Gelfand-Tsetlin diagram. A subset $\C$ of nodes of this diagram is called \textbf{connected}, if it is connected via double-{\em black} and double-{\em red} arrows, i.e. for any two nodes $p$ and $q$ in $\C$ there exists a sequence $p_1, \ldots, p_t\in\C$ such that $p=p_1$, $q=p_t$ and for every $i$ there either exist two {\em black} arrows $p_i \to p_{i+1}$ and $p_{i+1} \to p_i$ or there exist two {\em red} arrows $p_{i} \stackrel{red}{\to} p_{i+1}$ and $p_{i+q}\stackrel{red}{\to} p_i$. The sequence $(p_1, \ldots, p_t)$ is called a \textbf{connecting sequence} between $p$ and $q$.
	
	The maximal (with respect to inclusion) connected subsets are called \textbf{connected components}.
\end{defi}

One might be inclined to think that a tweaked Gelfand-Tsetlin pattern is a vertex of the tweaked Gelfand-Tsetlin polytope if and only if each of the connected components in its tweaked Gelfand-Tsetlin diagram contains a marked element. However, this is not true in contrast to types $\An$, $\Bn$ and $\Cn$. There are more possibilities as the following examples show.

\begin{ex}\label{ex:d3}
	Let $G = \SO_6$ and $\lambda = \rho = 2\epsilon_1 + \epsilon_2\in\Lambda^+$. Then the pattern in \cref{fig:d3} is a vertex of $\widetilde{GT}(\lambda)$. However, its tweaked Gelfand-Tsetlin diagram	contains the isolated node $\zeta_{1,2}^\downarrow$.
\end{ex}

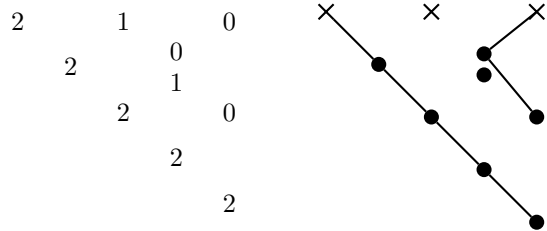
\begin{figure}[htb]\centering
	\caption{Vertex of the tweaked Gelfand-Tsetlin polytope $\widetilde{GT}(2\epsilon_1 + \epsilon_2)$ and its tweaked Gelfand-Tsetlin diagram in type $\mathsf{D}_3$.}\label{fig:d3}
	\begin{tikzpicture}
	\matrix (m) [matrix of nodes,align=center,row sep={4ex,between origins},column sep={2em,between origins},minimum width=2em]
	{
		$2$&&$1$&&$0$\\
		&$2$&&\begin{tabular}{c}$0$\\$1$\end{tabular}&\\
		&&$2$&&$0$\\
		&&&$2$&\\
		&&&&$2$\\
	};
	\end{tikzpicture}\hspace{20pt}\begin{tikzpicture}[every node/.prefix style={circle, fill, minimum size=.2cm, inner sep=0pt, outer sep=0pt},scale=1.4]
	\draw
	(0,0) node[thick,cross out, draw=black, minimum size=.2cm] (1) {}
	(1,0) node[thick,cross out, draw=black, minimum size=.2cm] (2) {}
	(2,0) node[thick,cross out, draw=black, minimum size=.2cm] (3) {}
	(0.5,-0.5) node (4) {}
	(1.5,-0.4) node (5) {}
	(1.5,-0.6) node (55) {}
	(1,-1) node (6) {}
	(2,-1) node (7) {}
	(1.5,-1.5) node (8) {}
	(2,-2) node (9) {};
	\draw[thick]
	(1)
	-- (4)
	-- (6)
	-- (8)
	-- (9);
	\draw[thick]
	(3.center)
	-- (5)
	-- (7);		
	\end{tikzpicture}\end{figure}

\begin{ex}
	Let $G=\SO_8$ and $\lambda=\omega_1+\omega_2+\omega_3+3\omega_4 = 4\epsilon_1+3\epsilon_2+2\epsilon_3-\epsilon_4$. Then the pattern in \cref{fig:d4} is a vertex of $\widetilde{GT}(\lambda)$. We see two things happening in this example. First of all we see a triangular pattern of {\em white} notes. Secondly we see two nodes ($\zeta_{1,3}^\uparrow$ and $\zeta_{1,3}^\downarrow$) that are not connected to any other node\,---\,although they are connected via single {\em red} arrows. But these single arrows do not count as connections in our sense.
\end{ex}

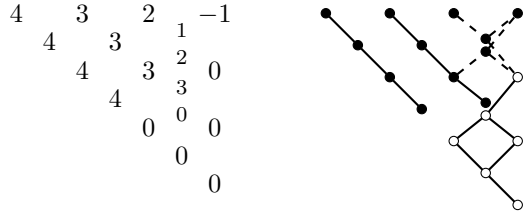
\begin{figure}[htb]\centering
	\caption{Vertex of the tweaked Gelfand-Tsetlin polytope $\widetilde{GT}(4\epsilon_1+3\epsilon_2+2\epsilon_3-\epsilon_4)$ and its tweaked Gelfand-Tsetlin diagram in type $\mathsf{D}_4$.}\label{fig:d4}
	\begin{tikzpicture}
	\matrix (m) [matrix of nodes,align=center,row sep={2.5ex,between origins},column sep={1.25em,between origins},minimum width=2em]
	{
		$4$&&$3$&&$2$&&$-1$\\
		&$4$&&$3$&&\footnotesize\begin{tabular}{c}$1$\\$2$\end{tabular}&\\
		&&$4$&&$3$&&$0$\\
		&&&$4$&&\footnotesize\begin{tabular}{c}$3$\\$0$\end{tabular}&\\
		&&&&$0$&&$0$\\
		&&&&&$0$&\\
		&&&&&&$0$\\			
	};
	\end{tikzpicture}
	\hspace{20pt}		
	\begin{tikzpicture}[every node/.prefix style={draw=black,circle, fill, minimum size=.125cm, inner sep=0pt, outer sep=0pt},scale=.85]
	\draw
	(0,0) node (1) {}
	(1,0) node (2) {}
	(2,0) node (3) {}
	(3,0) node (4) {}
	(0.5,-0.5) node (5) {}
	(1.5,-0.5) node (6) {}
	(2.5,-0.4) node (7) {}
	(2.5,-0.6) node (77) {}
	(1,-1) node (8) {}
	(2,-1) node (9) {}
	(3,-1) node[draw=black,fill=white] (10) {}
	(1.5,-1.5) node (11) {}
	(2.5,-1.4) node (12) {}
	(2.5,-1.6) node[draw=black,fill=white] (1212) {}
	(2,-2) node[draw=black,fill=white] (13) {}
	(3,-2) node[draw=black,fill=white] (14) {}
	(2.5,-2.5) node[draw=black,fill=white] (15) {}
	(3,-3) node[draw=black,fill=white] (16) {};
	\draw[thick]
	(1)
	-- (5)
	-- (8)
	-- (11);
	\draw[thick]
	(2)
	-- (6)
	-- (9)
	-- (12);
	\draw[thick]
	(10)
	-- (1212)
	-- (13)
	-- (15)
	-- (16);
	\draw[thick]
	(1212)
	-- (14)
	-- (15);
	\draw[thick,dashed]
	(3) edge (7)
	(7) edge (4)
	(7) edge (10)
	(9) edge (77) 
	(77) edge (4)
	(77) edge (10);
	\end{tikzpicture}\end{figure}

We will systematize these deviations from the $\An$, $\Bn$ and $\Cn$ cases as follows.

\begin{defi}
	Let $x\in\widetilde{GT}(\lambda)$ be a tweaked Gelfand-Tsetiln pattern and let $\D_{\Dn}^\lambda(x)$ be its tweaked Gelfand-Tsetlin diagram.
	\begin{enumerate}
		\item An \textbf{anomaly} is a triangle of white nodes of the following form.
		\begin{center}\begin{tikzpicture}[every node/.prefix style={circle, draw=black, minimum size=.125cm, inner sep=0pt, outer sep=0pt},scale=1]
			\draw
			(1,0) node (1) {}
			(0.5,-0.4) node[fill] (2) {}
			(0.5,-0.6) node (22) {}
			(0,-1) node (3) {}
			(1,-1) node (4) {}
			(0.5,-1.4) node (5) {}
			(0.5,-1.6) node[fill] (55) {}
			(1,-2) node (6) {};
			\draw[thick]
			(1)
			-- (22)
			-- (3)
			-- (5)
			-- (6);
			\draw[thick]
			(22)
			-- (4)
			-- (5);
			\end{tikzpicture}\end{center}
		\item A \textbf{single impurity} is a node $\zeta_{i,n-1}^\uparrow$ or $\zeta_{i,n-1}^\downarrow$ with $i< n-1$ that is not connected to any other node but the other node $\zeta_{i,n-1}^\downarrow$ (resp. $\zeta_{i,n-1}^\uparrow$) is connected to another node.
		\item A \textbf{double impurity} is a pair of nodes $(\zeta_{i,n-1}^\uparrow, \zeta_{i,n-1}^\downarrow)$ with $i<n-1$ such that both nodes are not connected to any other node but they are incident to {\em red} arrows.
		\item A \textbf{triviality} is a pair of nodes $(\zeta_{i,n-1}^\uparrow, \zeta_{i,n-1}^\downarrow)$ with $i<n-1$ such that both nodes are not connected to any other node and they are not incident to {\em red} arrows.
	\end{enumerate}
\end{defi}

Notice that we do not allow single impurities to happen at $\zeta_{n-1,n-1}$.

We can now state the complete classification of vertices of tweaked Gelfand-Testlin polytopes, as proved in \cite[Theorem 6.8.16]{S}.

\begin{theorem}\label{thm:tweakedvertices}
	A tweaked Gelfand-Tsetlin pattern is a vertex of the tweaked Gelfand-Tsetlin polytope if and only if each of the connected components of its tweaked Gelfand-Tsetlin diagram contains a marked element, contains an anomaly, is a single impurity or is part of a double impurity.
\end{theorem}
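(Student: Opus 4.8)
The plan is to mimic the proof of \cref{thm:markedorder}, using \cref{lemma:vertices}: a pattern $x\in\widetilde{GT}(\lambda)$ is a vertex if and only if there is no nonzero $v$ with $x\pm v\in\widetilde{GT}(\lambda)$. First I would analyze what constraints such a perturbation vector $v$ must satisfy. As in the marked order case, a pair of black arrows $p\rightleftarrows q$ forces $v_p=v_q$, and incidence of a node $p\in\C$ with a marked element forces $v_p=0$; so the $v$-values are constant on connected components and vanish on components meeting $A$. The new ingredient is the red arrows: wherever $z_{i,n-1}^\uparrow = y_{i,n-1}+y_{i,n}+y_{i+1,n}$ holds with equality, the perturbation must preserve this, so (writing $v$ in the obvious coordinates) $v_{\zeta_{i,n-1}^\uparrow} = v_{\xi_{i,n-1}} + v_{\xi_{i,n}} + v_{\xi_{i+1,n}}$, together with the linear relation \eqref{eq:tweak} forcing $v_{\zeta_{i,n-1}^\uparrow} - v_{\zeta_{i,n-1}^\downarrow} = v_{\xi_{i,n-1}} - v_{\xi_{i+1,n-1}}$. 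A double-red edge $p \rightleftarrows q$ (red) therefore forces $v_p = -v_q$.

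Next I would show the "if" direction: if every connected component contains a marked element, contains an anomaly, is a single impurity, or is part of a double impurity, then any admissible $v$ is zero. Components with a marked element give $v\equiv 0$ there as above. For an anomaly, the \emph{white} nodes already have coordinate value $0$, and one checks that the red-arrow relations inside the triangle (the quadruplet relations $x_q = x_p + x_r + x_s$ plus the black identifications) force the corresponding $v$-entries to satisfy $v_q = 3v_q$, hence $v=0$ on the whole white triangle; one must also verify, using the defining inequalities near the bottom of the pattern (the argument in the deleted \texttt{lemma:anomaly}), that the white triangle cannot be enlarged by the perturbation. For a single impurity $\zeta_{i,n-1}^\uparrow$: the other node $\zeta_{i,n-1}^\downarrow$ lies in a component meeting $A$ (or an anomaly, etc.), so $v_{\zeta_{i,n-1}^\downarrow}=0$, and then \eqref{eq:tweak} together with $v_{\xi_{i,n-1}}=v_{\xi_{i+1,n-1}}=0$ (these $\xi$'s are pinned by their own components) forces $v_{\zeta_{i,n-1}^\uparrow}=0$. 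For a double impurity, the pair $(\zeta_{i,n-1}^\uparrow,\zeta_{i,n-1}^\downarrow)$ is incident to red arrows, so by the above red relations $v_{\zeta_{i,n-1}^\uparrow}$ is determined as a sum of $v_{\xi}$'s that are already $0$, and similarly for the down-node; again \eqref{eq:tweak} is consistent.

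For the "only if" direction I would argue the contrapositive: suppose some connected component $\C$ is none of the four allowed types. I want to produce a nonzero $v$, supported on $\C$ (or on $\C$ together with red-partnered nodes), with $x\pm\epsilon v\in\widetilde{GT}(\lambda)$ for small $\epsilon>0$. Assign $v_p=\pm 1$ on $\C$ with signs dictated by the black edges ($+$) and red edges ($-$) — consistency of this sign assignment must be checked, i.e.\ that $\C$ contains no cycle with an odd number of red edges; this uses the structure of the tweaked diagram (red double-edges only appear in the specific $\zeta^\uparrow\!/\zeta^\downarrow$ positions). One then extends $v$ by the forced values on red-partnered $\xi$-nodes and verifies that none of the non-tight defining inequalities of $\widetilde{GT}(\lambda)$ is violated for small $\epsilon$ — exactly as in the $\epsilon$-argument of the proof of \cref{thm:markedorder} — and that the tight ones (black and red arrows inside $\C$) are preserved by the sign convention. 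The cases where $\C$ is an isolated $\zeta^\uparrow$ or $\zeta^\downarrow$ node (not a single impurity, so its partner is also unconnected) or an isolated pair not matching "double impurity"/"triviality"/"anomaly" must be handled by checking that perturbing just along those coordinates, compensated via \eqref{eq:tweak}, stays inside the polytope; this is where the precise definitions of single/double impurity and triviality are tuned to make the statement sharp.

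The main obstacle I expect is exactly this last bookkeeping: the red-arrow relations couple the $\zeta^\uparrow/\zeta^\downarrow$ coordinates to each other and to the $\xi$'s in a way that is \emph{not} captured by an ordinary marked poset, so neither the "if" nor the "only if" direction reduces cleanly to \cref{thm:markedorder}. One must carefully enumerate how a component can fail all four conditions and, in each such case, exhibit an explicit perturbation respecting \eqref{eq:tweak} — in particular ruling out the possibility that the linear relations \eqref{eq:tweak} secretly pin down a coordinate that the diagram suggests is free. Verifying the sign-consistency of the $\pm1$ labeling (no odd red cycles) and the stability of all slack inequalities under the perturbation is the technical heart; everything else follows the template of \cref{thm:markedorder}.
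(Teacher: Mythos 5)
Your plan follows the paper's approach exactly (sketched after the theorem statement and fully worked out in \cite[Chapter 6, Section 8]{S}): characterize vertices via \cref{lemma:vertices}, derive the forced relations on an admissible perturbation $v$ coming from the \emph{black} and \emph{red} double arrows, the tight quadruplet equalities $x_q = x_p+x_r+x_s$, and the linear constraint \eqref{eq:tweak}, then show $v=0$ under each of the four hypotheses and explicitly build a nonzero sign-consistent $v$ when some component violates all of them. The two technical points you flag as the heart of the matter --- consistency of the $\pm1$ labeling (no cycle with an odd number of red double arrows, which the thesis establishes by showing such a cycle would force zeros and hence \emph{white} nodes, a contradiction) and the careful treatment of trivialities and single/double impurities via \eqref{eq:tweak} --- are indeed precisely the lemmas the thesis isolates for this proof.
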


We will not state a proof of this result as it is quite lengthy and technical. The idea is to use \cref{lemma:vertices} and explicitly show that for each $x\in\widetilde{GT}(\lambda)$ there exists a vector $v\neq0$ such that $x\pm v\in\widetilde{GT}(\lambda)$ if and only if $\D_{\Dn}^\lambda(x)$ violates the conditions. The construction of $v$ is very similar to the marked order case, one just has to introduce signs for {\em red} arrows and carefully treat the double nodes. We invite the reader to verify the explicit calculations in \cite[Chapter 6, Section 8]{S}. 

Let us start our return towards string polytopes. For that purpose we introduce the following notation.

\begin{nota}
	For $\lambda = (\lambda_1, \ldots, \lambda_n) \in \RR^n$ let $\Gamma(\lambda)\subseteq\RR$ denote the free abelian group generated by the coefficients $\lambda_1, \ldots, \lambda_n$, i.e.
	\[ \Gamma(\lambda) := \ZZ \lambda_1 + \ldots \ZZ\lambda_n. \]
\end{nota}

This will allow us to state the following crucial result.

\begin{theorem}\label{thm:gamma}
	Let $\lambda\in\RR^n$ and let $x\in\widetilde{GT}(\sum_{i=1}^{n}\lambda_i\epsilon_i)$ be a vertex of the tweaked Gelfand-Tsetlin polytope. Then every coordinate of $x$ lies in $\Gamma(\lambda)$.
\end{theorem}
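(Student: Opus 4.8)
\textit{Proof proposal.} The plan is to feed the vertex classification \cref{thm:tweakedvertices} into a short case analysis, exploiting the fact that the two ``exotic'' kinds of connected components — single impurities and the singleton pieces of a double impurity — consist only of $\zeta$-nodes $\zeta_{i,n-1}^{\uparrow}$ or $\zeta_{i,n-1}^{\downarrow}$ with $i<n-1$. Thus every $\xi$-node, and every $\zeta$-node that is not an impurity (in particular $\zeta_{n-1,n-1}$), lies in a component of $\D_{\Dn}^\lambda(x)$ that contains a marked element or contains an anomaly. I would first dispose of these ``tame'' components and only afterwards express the impurity coordinates through the tame ones; since the tame analysis does not refer to impurity coordinates at all, this splitting is non-circular.

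For the tame step I would first record the elementary sign bookkeeping: by construction a pair of black arrows $p\to q$, $q\to p$ forces $x_p=x_q$, and a pair of red arrows forces $x_p=-x_q$; composing along a connecting sequence, any two nodes in the same connected component satisfy $x_p=\pm x_q$. Hence, if the component of $p$ contains a marked node $\xi_{1,j}$ — whose value is $\lambda_j$ by the pseudo-marking of \cref{prop:gtorder} — then $x_p\in\{\lambda_j,-\lambda_j\}\subseteq\ZZ\lambda_j\subseteq\Gamma(\lambda)$. If instead the component contains an anomaly, it contains a white node, which has value $0$, so $x_p=\pm 0=0\in\Gamma(\lambda)$. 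By \cref{thm:tweakedvertices}, every component carrying an $\xi$-node or a non-impurity $\zeta$-node is of one of these two forms, so all such coordinates of $x$ lie in $\Gamma(\lambda)$.

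It then remains to treat the impurities. If $\zeta_{i,n-1}^\uparrow$ (with $i<n-1$) is a single impurity, its partner $\zeta_{i,n-1}^\downarrow$ is connected to some further node, so the component of $\zeta_{i,n-1}^\downarrow$ has at least two elements and hence, again by \cref{thm:tweakedvertices}, contains a marked element or an anomaly; by the tame step $z_{i,n-1}^\downarrow\in\Gamma(\lambda)$. The defining relation \cref{eq:tweak} gives $z_{i,n-1}^\uparrow = z_{i,n-1}^\downarrow + y_{i,n-1} - y_{i+1,n-1}$, and $y_{i,n-1},y_{i+1,n-1}$ are $\xi$-values already known to lie in $\Gamma(\lambda)$, so $z_{i,n-1}^\uparrow\in\Gamma(\lambda)$; the case where $\zeta_{i,n-1}^\downarrow$ is the single impurity is symmetric. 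If $\zeta_{i,n-1}^\uparrow$ is part of a double impurity, then by definition it is incident to red arrows, i.e.\ the red triple $\xi_{i,n-1}\to\zeta_{i,n-1}^\uparrow\to\xi_{i,n},\xi_{i+1,n}$ survives in the diagram, which by construction means $z_{i,n-1}^\uparrow = y_{i,n-1}+y_{i,n}+y_{i+1,n}$, a sum of $\xi$-values; hence $z_{i,n-1}^\uparrow\in\Gamma(\lambda)$, and then $z_{i,n-1}^\downarrow = z_{i,n-1}^\uparrow - y_{i,n-1}+y_{i+1,n-1}\in\Gamma(\lambda)$ by \cref{eq:tweak}. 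Trivialities cannot occur for a vertex, so this exhausts all coordinates.

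I expect the only genuinely delicate points to be organizational rather than computational: verifying that impurity components really are singletons of exactly the stated shape (so that the ``tame-first, impurities-second'' decomposition is legitimate), and matching the informal phrase ``incident to red arrows'' in the definition of a double impurity with the precise identity $z_{i,n-1}^\uparrow = y_{i,n-1}+y_{i,n}+y_{i+1,n}$ that the surviving red triple encodes. Once \cref{thm:tweakedvertices} is granted, no estimates or lengthy calculations are needed.
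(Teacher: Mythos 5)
Your proposal is correct and follows essentially the same route as the paper: both feed the vertex classification of \cref{thm:tweakedvertices} into a case analysis, getting $x_p=\pm\lambda_j$ or $x_p=0$ along components containing a marked element or an anomaly, and then recovering the impurity coordinates either from the relation $z_{i,n-1}^\uparrow = y_{i,n-1}+y_{i,n}+y_{i+1,n}$ encoded by the \emph{red} arrows or from \cref{eq:tweak} via the partner node. The only difference is organizational (you split by single versus double impurity, the paper by incidence to \emph{red} arrows, and you make the non-circularity of the ``tame first'' step explicit), which does not change the argument.
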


\begin{proof}
	Let $p\in\GT_n$. We want to calculate $x_p$ via the tweaked Gelfand-Tsetlin diagram $\D_{\Dn}^\lambda(x)$.	
	By \cref{thm:tweakedvertices} we know that $p$ is connected to a marked element, is connected to an anomaly, is a single impurity or is part of a double impurity.
		
	By construction of $\D_{\Dn}^\lambda(x)$ we know that $x_p = \pm \lambda_j$ if $p$ is connected to $\xi_{1,j}$. If $p$ is connected to an anomaly, we know that $x_p = 0$. So in both cases $x_p$ lies in $\{ \pm\lambda_1, \ldots, \pm\lambda_n \}\cup\{0\}\subseteq\Gamma(\lambda)$.
	
	We will only prove the impurity case for $p=\zeta_{i,n}^\uparrow$ because the proof for $\zeta_{i,n-1}^\downarrow$ is completely analogous.	
	If $p=\zeta_{i,n-1}^\uparrow$ is incident to {\em red} arrows, we know that $x_p=z_{i,n-1}^\uparrow$ can be calculated as $z_{i,n-1}^\uparrow = y_{i,n-1} + y_{i,n} + y_{i+1,n}$. Since the latter three coordinates lie in $\Gamma(\lambda)$, the same holds true for $z_{i,n-1}^\uparrow=x_p$.	
	If $p=\zeta_{i,n-1}^\uparrow$ is not incident to {\em red} arrows, we know that $\zeta_{i,n-1}^\downarrow$ cannot be an impurity too. So the value $x_p = z_{i,n-1}^\uparrow$ is uniquely determined by the equation $z_{i,n-1}^\uparrow = z_{i,n-1}^\downarrow + y_{i,n-1}-y_{i+1,n-1}$. Since the latter three coordinates lie in $\Gamma(\lambda)$, the same holds true for $z_{i,n-1}^\uparrow=x_p$.
	This concludes our proof.
\end{proof}

During this proof we have actually shown the following special case.

\begin{cor}\label{cor:12z}
	Let $\lambda\in(\frac{1}{2}+\ZZ)^n$ and let $x\in\widetilde{GT}(\sum_{i=1}^{n}\lambda_i\epsilon_i)$ be a vertex of the tweaked Gelfand-Tsetlin polytope. Then
	\begin{enumerate}
		\item $x_p = 0$ if $p$ is connected to an anomaly,
		\item $x_p \in\frac{1}{2}\ZZ$ if $p$ is a single impurity or part of a double impurity, and
		\item $x_p\in\frac{1}{2}+\ZZ$ for any other $p$.
	\end{enumerate}
\end{cor}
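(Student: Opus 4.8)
\section*{Proof proposal for \texorpdfstring{\cref{cor:12z}}{the Corollary}}

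The plan is to refine the argument already carried out for \cref{thm:gamma}: we run through the four possibilities of \cref{thm:tweakedvertices} for the connected component $\C$ of a node $p$ in $\D_{\Dn}^\lambda(x)$, but now keep track of the precise coset of $x_p$ in $\frac{1}{2}\ZZ$ rather than merely of membership in $\Gamma(\lambda)$. The set-up observation to record first is that impurities (single or double) can only occur among the nodes $\zeta_{i,n-1}^\uparrow$ and $\zeta_{i,n-1}^\downarrow$ with $i<n-1$; consequently every $\xi$-node, and every $\zeta$-node other than these, lies in a component that contains a marked element or an anomaly. Since $x$ is a vertex, \cref{thm:tweakedvertices} guarantees that the four cases are exhaustive.

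First I would settle \textbf{(i)}. If $p$ is connected to an anomaly, then $p$ is joined by a chain of \emph{black} double arrows to white nodes, and by the remark following the construction of $\D_{\Dn}^\lambda(x)$ every white node has coordinate $0$; hence $x_p=0$. Next, for \textbf{(iii)}, suppose the component $\C$ of $p$ contains a marked element. As $A=\{\xi_{1,1},\dots,\xi_{1,n}\}$, this element is some $\xi_{1,j}$ with associated value $\lambda_j$, and, exactly as in the proof of \cref{thm:gamma}, tracing a connecting sequence from $\xi_{1,j}$ to $p$ and multiplying the local signs gives $x_p=\pm\lambda_j$. Since $\lambda_j\in\frac{1}{2}+\ZZ$ and likewise $-\lambda_j\in\frac{1}{2}+\ZZ$, we conclude $x_p\in\frac{1}{2}+\ZZ$. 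By \cref{thm:tweakedvertices} the components excluded in \textbf{(iii)} are precisely the anomaly-containing ones and the impurity ones, so every remaining $p$ has $x_p\in\frac{1}{2}+\ZZ$.

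It then remains to prove \textbf{(ii)}. Let $p=\zeta_{i,n-1}^\uparrow$ be a single impurity or part of a double impurity (the case $\zeta_{i,n-1}^\downarrow$ is symmetric). If $p$ is incident to \emph{red} arrows — which is automatic in the double-impurity case — then by construction $z_{i,n-1}^\uparrow=y_{i,n-1}+y_{i,n}+y_{i+1,n}$, and each of the three $\xi$-coordinates on the right lies, by the set-up observation, in a component with a marked element or an anomaly, hence in $(\frac{1}{2}+\ZZ)\cup\{0\}\subseteq\frac{1}{2}\ZZ$ by \textbf{(i)} and \textbf{(iii)}; the sum is therefore in $\frac{1}{2}\ZZ$. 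If instead $p$ is a single impurity not incident to \emph{red} arrows, then $\zeta_{i,n-1}^\downarrow$ is not an impurity, so $z_{i,n-1}^\downarrow\in\frac{1}{2}\ZZ$, and \cref{eq:tweak} forces $z_{i,n-1}^\uparrow=z_{i,n-1}^\downarrow+y_{i,n-1}-y_{i+1,n-1}$, an integer combination of elements of $\frac{1}{2}\ZZ$, whence $x_p\in\frac{1}{2}\ZZ$. This exhausts all cases.

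I do not expect a genuine obstacle here, since the statement is essentially extracted from the proof of \cref{thm:gamma}; the only point needing care — and which I would flag explicitly — is the bookkeeping of \emph{red}-arrow signs in case \textbf{(iii)} and the verification that the impurity identities in case \textbf{(ii)} feed back only into coordinates already classified, i.e. that the recursion has depth one. Both follow immediately once one observes that impurities live only among the $\zeta_{i,n-1}^{\uparrow/\downarrow}$.
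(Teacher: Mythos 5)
Your proposal is correct and follows essentially the same route as the paper: the corollary is exactly the specialization of the proof of \cref{thm:gamma} (anomaly components give $0$, marked components give $\pm\lambda_j\in\frac{1}{2}+\ZZ$, and the impurity coordinates are determined by the red-arrow equality or by \cref{eq:tweak} from already-classified coordinates), with your added observation that impurities occur only at the $\zeta_{i,n-1}^{\uparrow/\downarrow}$ making the one-step recursion explicit.
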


\section{Integrality of Standard String Polytopes in Type $\Dn$}\label{sec:d}

We can now prove the following useful translation.

\begin{cor}\label{cor:tweak}
	Let $G=\SO_{2n}$ and $\lambda=\sum_{i=1}^{n}\lambda_i\epsilon_i\in\Lambda^+$. Let $x$ be a vertex of the standard string polytope $\Q_{\underline{w_0}^\mathrm{std}}(\lambda)$. Then $x$ is a lattice point if and only if \begin{enumerate}
		\item $\lambda_i \in\ZZ$ for all $i$ or
		\item $\lambda_i \in\frac{1}{2}+\ZZ$ for all $i$ and $\D_{\Dn}^\lambda(\widetilde{\phi}_\lambda(x))$ does not contain any anomaly.
	\end{enumerate} 
\end{cor}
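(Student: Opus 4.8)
The plan is to transport the question through the affine bijection $\widetilde{\phi}_\lambda$ of \cref{thm:stringtweak} and then read the answer off the tweaked Gelfand-Tsetlin diagram. Set $\tilde{x}:=\widetilde{\phi}_\lambda(x)$. Since $\widetilde{\phi}_\lambda$ is an affine bijection carrying $\Q_{\underline{w_0}^\mathrm{std}}(\lambda)$ onto $\widetilde{GT}(\lambda)$, it sends vertices to vertices, so $\tilde{x}$ is a vertex of $\widetilde{GT}(\lambda)$; and by the second part of \cref{thm:stringtweak} the point $x$ is a lattice point if and only if all coordinates of $\tilde{x}$ --- including the top row entries $y_{1,j}=\lambda_j$ --- lie in $\ZZ$, or all of them lie in $\frac{1}{2}+\ZZ$. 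I will also use the standard fact that a dominant integral weight of $\mathfrak{so}_{2n}$ written in the $\epsilon_i$ has either all $\lambda_i\in\ZZ$ or all $\lambda_i\in\frac{1}{2}+\ZZ$ (see \cite[Chapter 11, Theorem 6.6]{P}), so that conditions (i) and (ii) together exhaust all possibilities for $\lambda$.

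If all $\lambda_i\in\ZZ$, then $\Gamma(\lambda)\subseteq\ZZ$, and \cref{thm:gamma} applied to the vertex $\tilde{x}$ shows that every coordinate of $\tilde{x}$ lies in $\Gamma(\lambda)\subseteq\ZZ$; together with $\lambda_j\in\ZZ$ this means $x$ is a lattice point, which is the implication ``(i) $\Rightarrow$ lattice point''. Now assume all $\lambda_i\in\frac{1}{2}+\ZZ$; then the top row entries are non-integral, so by the criterion above $x$ is a lattice point if and only if \emph{every} coordinate of $\tilde{x}$ lies in $\frac{1}{2}+\ZZ$. If $\D_{\Dn}^\lambda(\tilde{x})$ contains an anomaly, then by \cref{cor:12z} some coordinate of $\tilde{x}$ equals $0\notin\frac{1}{2}+\ZZ$, so $x$ is not a lattice point. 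If it contains no anomaly, then by \cref{thm:tweakedvertices} every connected component contains a marked element, is a single impurity, or is part of a double impurity. A node in the component of a marked element $\xi_{1,j}$ has value $\pm\lambda_j\in\frac{1}{2}+\ZZ$; in particular this covers all $\xi$-coordinates, all $\zeta_{i,j}$ with $j\leq n-2$, and $\zeta_{n-1,n-1}$, none of which can be an impurity. For an impurity node $p=\zeta_{i,n-1}^{\uparrow}$ with $i<n-1$ (the case $p=\zeta_{i,n-1}^{\downarrow}$ is symmetric) there are two possibilities: if $p$ is incident to red arrows then $z_{i,n-1}^{\uparrow}=y_{i,n-1}+y_{i,n}+y_{i+1,n}$, a sum of three elements of $\frac{1}{2}+\ZZ$ and hence itself in $\frac{1}{2}+\ZZ$; if $p$ is not incident to red arrows then $\zeta_{i,n-1}^{\downarrow}$ is connected to some other node, hence (no anomaly) to a marked element, so $z_{i,n-1}^{\downarrow}\in\frac{1}{2}+\ZZ$, and the relation \eqref{eq:tweak} gives $z_{i,n-1}^{\uparrow}=z_{i,n-1}^{\downarrow}+y_{i,n-1}-y_{i+1,n-1}\in\frac{1}{2}+\ZZ$. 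Thus all coordinates of $\tilde{x}$ lie in $\frac{1}{2}+\ZZ$ and $x$ is a lattice point. This proves ``(ii) $\Rightarrow$ lattice point'' and, reading the anomaly case contrapositively, also ``lattice point $\Rightarrow$ (i) or (ii)'': if $x$ is a lattice point and (i) fails, then all $\lambda_i\in\frac{1}{2}+\ZZ$ and $\D_{\Dn}^\lambda(\tilde{x})$ has no anomaly, i.e. (ii) holds.

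I expect the main obstacle to be the impurity case of the previous paragraph: \cref{cor:12z} only guarantees that an impurity coordinate lies in $\frac{1}{2}\ZZ$, so one must actually re-examine how such a coordinate is pinned down --- either via the equality $z_{i,n-1}^{\uparrow}=y_{i,n-1}+y_{i,n}+y_{i+1,n}$ or via the affine relation \eqref{eq:tweak} --- and invoke the absence of anomalies to force the remaining $\xi$- and $\zeta$-values to be genuine $\pm\lambda_j$'s. Everything else is bookkeeping built on \cref{thm:stringtweak}, \cref{thm:gamma} and \cref{thm:tweakedvertices}.
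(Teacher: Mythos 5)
Your proof is correct and follows the same strategy as the paper: transport via the affine bijection $\widetilde{\phi}_\lambda$, apply \cref{thm:gamma} when all $\lambda_i\in\ZZ$, and in the half-integral case use \cref{cor:12z} together with the classification of \cref{thm:tweakedvertices} to force every coordinate into $\frac12+\ZZ$ exactly when there is no anomaly. You spell out the two impurity subcases (incident vs.\ not incident to red arrows) in more detail than the published proof, which only refers back to the argument inside \cref{thm:gamma}, but the reasoning is the same.
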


\begin{proof}
	By \cref{thm:stringtweak} we know that $x$ is a vertex of $\Q_{\underline{w_0}^\mathrm{std}}(\lambda)$ if and only if $\widetilde{\phi}_\lambda(x)$ is a vertex of $\widetilde{GT}(\lambda)$. Additionally, $x$ will be a lattice point if and only if the coordinates of $\widetilde{\phi}_\lambda(x)$ are either all integers or all in $\frac{1}{2}+\ZZ$. The claim now follows directly from \cref{thm:gamma} if $\lambda_i\in\ZZ$ for all $i$.
	
	Let us consider the other case $\lambda_i\in\frac{1}{2}+\ZZ$ for all $i$. Notice that most of the coordinates will be in $\frac{1}{2}+\ZZ$ by \cref{cor:12z}. If $\D_{\Dn}^\lambda(x)$ contains an anomaly, there will be coordinates equal to zero. Hence $x$ cannot be a lattice point in this case. If there does not exist an anomaly in $\D_{\Dn}^\lambda(x)$, the only nodes whose corresponding coordinate could potentially be not in $\frac{1}{2}+\ZZ$ would be single or double impurities. But the proof of \cref{thm:gamma} shows that these coordinates can be calculated as a sum of three coordinates in $\frac{1}{2}+\ZZ$. Hence they must be in $\frac{1}{2}+\ZZ$ as well.	
\end{proof}

With this result, we can tackle the last remaining case of \cref{thm:main}.

\begin{proof}[Proof of \cref{thm:main} in Type $\Dn$]
	Let $G=\SO_{2n}$ and fix a dominant weight $\lambda=\sum_{i=1}^n\lambda_i\epsilon_i\in\Lambda^+$. If $\langle\lambda,\alpha_{n-1}^\vee\rangle+\langle\lambda,\alpha_n^\vee\rangle$ is an even integer, we know that $\lambda_i\in\ZZ$ for all $1\leq i \leq n$. So $\Q_{\underline{w_0}^\mathrm{std}}(\lambda)$ will be a lattice polytope by \cref{cor:tweak}.
	
	If $\langle\lambda,\alpha_{n-1}^\vee\rangle+\langle\lambda,\alpha_n^\vee\rangle$ is an odd integer, we know that $\lambda_i\in\frac{1}{2}+\ZZ$ for all $1\leq i\leq n$. Since \cref{thm:stringtweak} yields a bijection between the vertices of $\Q_{\underline{w_0}^\mathrm{std}}(\lambda)$ and the vertices of $\widetilde{GT}(\lambda)$, it is necessary and sufficient to find a vertex $x$ of $\widetilde{GT}(\lambda)$ that contains an anomaly in its tweaked Gelfand-Tsetlin diagram $\D_{\Dn}^\lambda(x)$. Then \cref{cor:tweak} implies that the vertex $\widetilde{\phi}_\lambda^{-1}(x)$ of $\Q_{\underline{w_0}^\mathrm{std}}(\lambda)$ will not be a lattice point. 
	Notice that for $n<4$ any tweaked Gelfand-Tsetlin diagram cannot contain an anomaly if $\lambda_n\neq 0$. So for small ranks all standard string polytopes must be lattice polytopes.
	
	For $n\geq 4$ consider the pattern in \cref{fig:dn}. We can verify quite easily that this pattern is indeed a tweaked Gelfand-Tsetlin pattern for $\lambda$. The only nontrivial (in)equality to verify is $\lambda_{n-1}+\lambda_n\geq 0$. But this is true for any dominant integral weight in type $\Dn$. The tweaked Gelfand-Tsetlin diagram of this pattern is drawn in \cref{fig:dndiagram}. We see that every connected component contains a marked element or an anomaly. Hence this pattern is indeed a vertex of the tweaked Gelfand-Tsetlin polytope containing an anomaly in its tweaked Gelfand-Tsetlin diagram. This concludes the proof.
\end{proof}

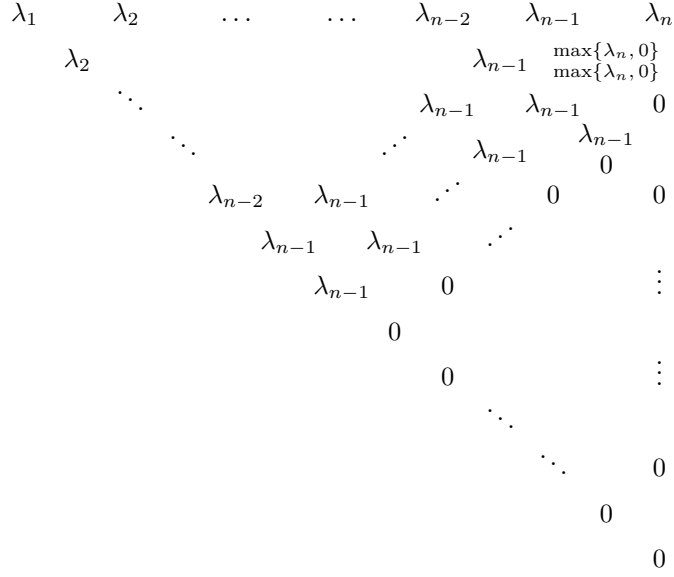
\begin{figure}[htb]\centering
	\caption{A special tweaked Gelfand-Tsetlin pattern in type $\Dn$ for an arbitrary weight $\lambda$.}\label{fig:dn}
	\begin{tikzpicture}
	\matrix (m) [matrix of nodes,align=center,row sep={4ex,between origins},column sep={2em,between origins},minimum width=2em]
	{
		$\lambda_1$&&$\lambda_2$ &&$\ldots$&&$\ldots$&&$\lambda_{n-2}$ && $\lambda_{n-1}$&&$\lambda_n$\\
		&$\lambda_2$&&&&&&&&$\lambda_{n-1}$&&\scriptsize\begin{tabular}{c}$\max\{\lambda_n,0\}$\\$\max\{\lambda_n,0\}$\end{tabular}&\\
		&&$\ddots$&&&&&&$\lambda_{n-1}$&&$\lambda_{n-1}$&&$0$\\
		&&&$\ddots$&&&&$\udots$&&$\lambda_{n-1}$&&\begin{tabular}{c}$\lambda_{n-1}$\\$0$\end{tabular}&\\
		&&&&$\lambda_{n-2}$&&$\lambda_{n-1}$&&$\udots$&&$0$&&$0$\\
		&&&&&$\lambda_{n-1}$&&$\lambda_{n-1}$&&$\udots$&&&\\
		&&&&&&$\lambda_{n-1}$&&$0$&&&&$\vdots$\\
		&&&&&&&$0$&&&&&\\
		&&&&&&&&$0$&&&&$\vdots$\\
		&&&&&&&&&$\ddots$&&&\\
		&&&&&&&&&&$\ddots$&&$0$\\
		&&&&&&&&&&&$0$&\\
		&&&&&&&&&&&&$0$\\
	};
	\end{tikzpicture}\end{figure}

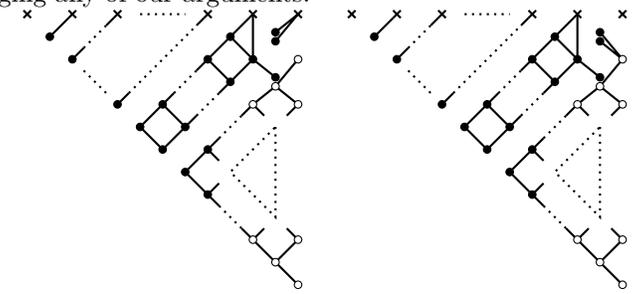
\begin{figure}\centering
	\caption{Tweaked Gelfand-Tsetlin diagrams of the tweaked Gelfand-Tsetlin pattern from \cref{fig:dn}. Left hand side for $\lambda_n>0$, right hand side for $\lambda_n<0$. We decided to only draw the case where $\lambda_1>\lambda_2>\ldots>\lambda_{n-1}>\lambda_n\neq 0$ since any equality between the coefficients of $\lambda$ would only result in more pairs of {\em black} arrows connecting formerly disjoint connected components, thus not changing any of our arguments.}\label{fig:dndiagram}
	
	\begin{tikzpicture}[every node/.prefix style={draw=black, circle, fill, minimum size=.1cm, inner sep=0pt, outer sep=0pt},scale=.6]
	\draw
	(0,0) node[cross out,thick] (11) {}
	(1,0) node[cross out,thick] (12) {}
	(2,0) node[cross out,thick] (13) {}
	(4,0) node[cross out,thick] (15) {}
	(5,0) node[cross out,thick] (16) {}
	(6,0) node[cross out,thick] (17) {}
	(0.5,-0.5) node (21) {}
	(4.5,-0.5) node (25) {}
	(5.5,-0.4) node (26u) {}
	(5.5,-0.6) node (26d) {}
	(1,-1) node (32) {}
	(4,-1) node (35) {}
	(5,-1) node (36) {}
	(6,-1) node[draw=black,fill=white] (37) {}
	(4.5,-1.5) node (45) {}
	(5.5,-1.4) node (46u) {}
	(5.5,-1.6) node[draw=black,fill=white] (46d) {}
	(2,-2) node (53) {}
	(3,-2) node (54) {}
	(5,-2) node[draw=black,fill=white] (56) {}
	(6,-2) node[draw=black,fill=white] (57) {}
	(2.5,-2.5) node (63) {}
	(3.5,-2.5) node (64) {}
	(3,-3) node (74) {}
	(4,-3) node (75) {}
	(3.5,-3.5) node (84) {}
	(4,-4) node (95) {}
	(5,-5) node[draw=black,fill=white] (116) {}
	(6,-5) node[draw=black,fill=white] (117) {}
	(5.5,-5.5) node[draw=black,fill=white] (126) {}
	(6,-6) node[draw=black,fill=white] (137) {}
	;
	\draw[thick]
	%special
	(16.center) -- (36)
	%diagonals up
	(12) -- (21)
	(13) -- (1.75,-0.25)
	(32) -- (1.25,-0.75)
	(15) -- (3.75,-0.25)
	(53) --(2.25,-1.75)
	(16) -- (25) -- (35) -- (3.75,-1.25)
	(63) -- (54) -- (3.25,-1.75)
	(36) -- (45) -- (4.25,-1.75)
	(74) -- (64) -- (3.75,-2.25)
	(37) -- (46d) -- (56) -- (4.75,-2.25)
	(84) -- (75) -- (4.25,-2.75)
	(57) -- (5.75,-2.25)
	(95) -- (4.25,-3.75)
	(116) -- (5.25,-4.75)
	(126) -- (117)
	%diagonals down
	(63) -- (74)
	(54) -- (64)
	(35) -- (45) 
	(25) -- (36)
	(36) -- (46u)
	(26u) -- (17.center)
	(26d) -- (17.center)
	(46d) -- (57)
	(56) -- (5.25,-2.25)
	(75) -- (4.25,-3.25)
	(84) -- (95)
	(95) -- (4.25,-4.25)
	(116) -- (4.75,-4.75)
	(116) -- (126)
	(117) -- (5.75,-4.75)
	(126) -- (137)		
	;
	\draw[thick,dotted]
	(2.5,0) -- (3.5,0)
	(1.25,-0.75) -- (1.75,-0.25)
	(1.25,-1.25) -- (1.75,-1.75)
	(2.25,-1.75) -- (3.75,-0.25)
	(3.25,-1.75) -- (3.75,-1.25)
	(3.75,-2.25) -- (4.25,-1.75)
	(4.25,-2.75) -- (4.75,-2.25)
	(4.25,-4.25) -- (4.75,-4.75)
	(5.5,-2.5) -- (5.5,-4.5) --(4.5,-3.5) -- cycle
	;
	\end{tikzpicture}
	\hspace{10pt}
	\begin{tikzpicture}[every node/.prefix style={draw=black,circle, fill, minimum size=.1cm, inner sep=0pt, outer sep=0pt},scale=.6]
	\draw
	(0,0) node[cross out,thick] (11) {}
	(1,0) node[cross out,thick] (12) {}
	(2,0) node[cross out,thick] (13) {}
	(4,0) node[cross out,thick] (15) {}
	(5,0) node[cross out,thick] (16) {}
	(6,0) node[cross out,thick] (17) {}
	(0.5,-0.5) node (21) {}
	(4.5,-0.5) node (25) {}
	(5.5,-0.4) node (26u) {}
	(5.5,-0.6) node (26d) {}
	(1,-1) node (32) {}
	(4,-1) node (35) {}
	(5,-1) node (36) {}
	(6,-1) node[draw=black,fill=white] (37) {}
	(4.5,-1.5) node (45) {}
	(5.5,-1.4) node (46u) {}
	(5.5,-1.6) node[draw=black,fill=white] (46d) {}
	(2,-2) node (53) {}
	(3,-2) node (54) {}
	(5,-2) node[draw=black,fill=white] (56) {}
	(6,-2) node[draw=black,fill=white] (57) {}
	(2.5,-2.5) node (63) {}
	(3.5,-2.5) node (64) {}
	(3,-3) node (74) {}
	(4,-3) node (75) {}
	(3.5,-3.5) node (84) {}
	(4,-4) node (95) {}
	(5,-5) node[draw=black,fill=white] (116) {}
	(6,-5) node[draw=black,fill=white] (117) {}
	(5.5,-5.5) node[draw=black,fill=white] (126) {}
	(6,-6) node[draw=black,fill=white] (137) {}
	;
	\draw[thick]
	%special
	(16.center) -- (36)
	%diagonals up
	(12) -- (21)
	(13) -- (1.75,-0.25)
	(32) -- (1.25,-0.75)
	(15) -- (3.75,-0.25)
	(53) --(2.25,-1.75)
	(16) -- (25) -- (35) -- (3.75,-1.25)
	(63) -- (54) -- (3.25,-1.75)
	(36) -- (45) -- (4.25,-1.75)
	(74) -- (64) -- (3.75,-2.25)
	(37) -- (46d) -- (56) -- (4.75,-2.25)
	(84) -- (75) -- (4.25,-2.75)
	(57) -- (5.75,-2.25)
	(95) -- (4.25,-3.75)
	(116) -- (5.25,-4.75)
	(126) -- (117)
	%diagonals down
	(63) -- (74)
	(54) -- (64)
	(35) -- (45) 
	(25) -- (36)
	(36) -- (46u)
	(26u) -- (37)
	(26d) -- (37)
	(46d) -- (57)
	(56) -- (5.25,-2.25)
	(75) -- (4.25,-3.25)
	(84) -- (95)
	(95) -- (4.25,-4.25)
	(116) -- (4.75,-4.75)
	(116) -- (126)
	(117) -- (5.75,-4.75)
	(126) -- (137)		
	;
	\draw[thick,dotted]
	(2.5,0) -- (3.5,0)
	(1.25,-0.75) -- (1.75,-0.25)
	(1.25,-1.25) -- (1.75,-1.75)
	(2.25,-1.75) -- (3.75,-0.25)
	(3.25,-1.75) -- (3.75,-1.25)
	(3.75,-2.25) -- (4.25,-1.75)
	(4.25,-2.75) -- (4.75,-2.25)
	(4.25,-4.25) -- (4.75,-4.75)
	(5.5,-2.5) -- (5.5,-4.5) --(4.5,-3.5) -- cycle
	;
	\end{tikzpicture}
	
\end{figure}

\section{Gorenstein Fano Toric Degenerations}\label{sec:applications}

We will conclude our paper with some remarks and consequences. 

\begin{rem}
	It should be noted that morally it is absolutely not clear why the string polytope should know {\em anything} about the representations of the underlying algebraic group. Firstly, its definition and many of its explicit descriptions are done purely from the perspective of the Lie algebra (see \cite{L} and \cite{BZ2}). Secondly, we have already seen in \cite[Example 5.5]{S2} that this connection does not hold for arbitrary reduced decompositions. So the connection between the standard reduced decomposition and representations of the algebraic group remains mysterious.
\end{rem}

We would now like to impose some geometric meaning to our result. Batyrev proved in \cite[Proposition 2.2.23]{B} that reflexive polytopes are in one-to-one correspondence to Gorenstein Fano toric varieties. So we want to understand whether a given string polytope is not only integral but reflexive.
In \cite{S2} we proved the following fact.

\begin{theorem}\label{thm:dual}
	If the valuation semigroup $\Gamma(\lambda)$ associated to a partial flag variety $G/P$ via the $P$-regular dominant integral weight $\lambda$ and full-rank valuation $v$ is finitely generated and saturated, the following properties of the Newton-Okounkov body $\Delta(\lambda)$ are equivalent.
	\begin{enumerate}[label=(\roman*)]
		\item $\L_\lambda$ is the anticanonical line bundle over $G/P$.
		\item $\Delta(\lambda)$ contains exactly one lattice point $p_{\lambda}$ in its interior.
	\end{enumerate}
	Furthermore, in this case the polar dual\footnote{We always refer to the {\em polar dual} defined as $S^* := \left\lbrace y \in \RR^N \mm \langle x,y \rangle \leq 1 \text{ for all } x \in S\right\rbrace$ for an arbitrary set $S \subseteq \RR^N$. If $S$ is a polytope with the origin in its interior, then $S^*$ is a polytope.} of the translated Newton-Okounkov body $\Delta(\lambda) - p_{\lambda}$ is a lattice polytope.
\end{theorem}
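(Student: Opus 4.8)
The plan is to transport the statement across the Newton-Okounkov degeneration to its toric special fibre and then read everything off the polytope $\Delta(\lambda)$. Write $N=\dim G/P$ and let $R=\bigoplus_{m\ge0}H^0(G/P,\L_\lambda^{\otimes m})$ be the section ring; it is a classical fact that flag varieties with an ample line bundle are projectively normal and have Cohen-Macaulay section rings, so $R$ is a standard graded Cohen-Macaulay domain with graded canonical module $\omega_R=\bigoplus_m H^0(G/P,\omega_{G/P}\otimes\L_\lambda^{\otimes m})$. Since the value semigroup $\Gamma(\lambda)$ of $v$ is finitely generated, there is a flat family over $\mathbb{A}^1$ with general fibre $(G/P,\L_\lambda)$ and special fibre the polarized toric variety attached to $\CC[\Gamma(\lambda)]$; saturation of $\Gamma(\lambda)$ makes $\CC[\Gamma(\lambda)]$ the normal semigroup ring of the cone $C$ over $\{1\}\times\Delta(\lambda)$, which is itself Cohen-Macaulay (Hochster) and, by flatness, has the same Hilbert function as $R$ in each degree. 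Combining projective normality (which forces $\Gamma(\lambda)$ to be generated in degree $1$) with saturation also gives $\Delta(\lambda)=\mathrm{conv}\bigl(\Delta(\lambda)\cap\ZZ^N\bigr)$, so $\Delta(\lambda)$ is in fact a lattice polytope.

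The crucial input is the identity $\#\bigl(\mathrm{int}(m\Delta(\lambda))\cap\ZZ^N\bigr)=\dim H^0\bigl(G/P,\omega_{G/P}\otimes\L_\lambda^{\otimes m}\bigr)$ for all $m\ge1$. To obtain it I would compare canonical modules: the Hilbert series of the graded canonical module of a Cohen-Macaulay graded algebra $S$ equals $(-1)^{\dim S}H_S(t^{-1})$, hence depends only on $H_S$ and $\dim S$, so $\dim[\omega_R]_m=\dim[\omega_{\CC[\Gamma(\lambda)]}]_m$ for every $m$. The left-hand side is $\dim H^0(G/P,\omega_{G/P}\otimes\L_\lambda^{\otimes m})$ by the description of $\omega_R$, while the right-hand side equals $\#(\mathrm{int}(m\Delta(\lambda))\cap\ZZ^N)$ for $m\ge1$ by the Danilov-Stanley description of the canonical module of a normal affine semigroup ring (it is spanned by the monomials indexed by the relative interior lattice points of $C$).

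Evaluating the identity at $m=1$ yields the equivalence. If $\L_\lambda\cong\omega_{G/P}^{-1}$, then $\omega_{G/P}\otimes\L_\lambda\cong\O_{G/P}$ has a one-dimensional space of sections, so $\Delta(\lambda)$ has exactly one interior lattice point $p_\lambda$, giving (i)$\Rightarrow$(ii). Conversely, writing $\omega_{G/P}^{-1}=\L_\kappa$ for the ($P$-regular, dominant) anticanonical weight $\kappa$, condition (ii) gives $\dim H^0(G/P,\L_{\lambda-\kappa})=1$; on a flag variety $H^0(\L_\mu)$ is nonzero only when $\mu$ is $P$-dominant and is then the irreducible module $V_P(\mu)$, which is one-dimensional precisely when $\mu=0$ (Borel-Weil), so $\lambda=\kappa$ and (i) holds. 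For the \emph{furthermore}, assume (i): then $\omega_{G/P}\otimes\L_\lambda^{\otimes m}\cong\L_\lambda^{\otimes(m-1)}$, so $\omega_R\cong R(-1)$ and $\dim[\omega_R]_m=\dim R_{m-1}$ for all $m$; transporting this through the Hilbert-function comparison gives $\#(\mathrm{int}((k+1)\Delta(\lambda))\cap\ZZ^N)=\#(k\Delta(\lambda)\cap\ZZ^N)$ for every $k\ge0$. By Hibi's criterion this is exactly the statement that the lattice polytope $\Delta(\lambda)-p_\lambda$, which has $\mathbf{0}$ as its unique interior lattice point by (ii), is reflexive, so its polar dual is a lattice polytope. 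Equivalently, one can get the same functional equation directly from Ehrhart-Macdonald reciprocity combined with Serre duality, using $\chi(\omega_{G/P}\otimes\L_\lambda^{\otimes(m+1)})=\chi(\L_\lambda^{\otimes m})$, which is immediate once $\L_\lambda$ is anticanonical.

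I expect the main obstacle to be the first paragraph: making precise that Anderson's degeneration is flat with the asserted toric special fibre, that the two rings share a Hilbert function, and that finite generation together with saturation (and projective normality of $G/P$) really do force $\Delta(\lambda)$ to be a genuine lattice polytope — this is where all of the standing hypotheses must be used at once, and any variant of the argument that only produces a rational $\Delta(\lambda)$ would additionally require a rational-polytope refinement of Hibi's theorem in the final step.
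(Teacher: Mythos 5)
This theorem is not proved in the present paper at all: it is quoted verbatim from \cite{S2}, so the only fair comparison is with the argument there, and your overall route (degenerate the section ring $R=\bigoplus_m H^0(G/P,\L_\lambda^{\otimes m})$ to the normal semigroup ring of the cone over $\Delta(\lambda)$, compare graded canonical modules via the Hilbert-series symmetry for Cohen--Macaulay graded rings and the Danilov--Stanley description, and then read off (i)$\Leftrightarrow$(ii) at $m=1$ via Borel--Weil) is essentially the same mechanism that \cite{S2} uses. The identity $\#\bigl(\mathrm{int}(m\Delta(\lambda))\cap\ZZ^N\bigr)=\dim H^0(G/P,\omega_{G/P}\otimes\L_\lambda^{\otimes m})$ and the two implications at $m=1$ are fine as you argue them.

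There is, however, one genuine error, and it sits exactly where you yourself flagged discomfort. Projective normality of $G/P$ makes $R$ standard graded, but this does \emph{not} force the value semigroup $\Gamma(\lambda)$ to be generated in degree one: when you expand $f\in R_m$ in products of degree-one sections, leading terms can cancel, so $v(f)$ need not lie in the subsemigroup generated by $\Gamma(\lambda)_1$; degree-one generation is simply not inherited by the associated graded. Consequently your claim that the hypotheses force $\Delta(\lambda)$ to be a lattice polytope is false --- and it cannot be true, since by \cref{thm:main} the standard string polytope in type $\Bn$ with $\langle\lambda,\alpha_n^\vee\rangle$ odd is a Newton--Okounkov body for a full-rank valuation with finitely generated, saturated semigroup (\cite{Ka}), yet is not a lattice polytope; if your claim held, \cref{cor:reflexivestring} would be an immediate consequence of \cref{thm:dual} and the whole paper would be unnecessary. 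This breaks your final step, because Hibi's palindromy criterion as you invoke it is a statement about lattice polytopes. The fix is the ``rational refinement'' you anticipated, and it is elementary: since $0=p_\lambda-p_\lambda$ lies in the interior of $P:=\Delta(\lambda)-p_\lambda$, one always has $kP\cap\ZZ^N\subseteq\mathrm{int}\bigl((k+1)P\bigr)\cap\ZZ^N$, so your equality of cardinalities upgrades to equality of sets for every $k\geq 0$; and for an arbitrary rational polytope $P$ with $0$ in its interior, the condition $kP\cap\ZZ^N=\mathrm{int}\bigl((k+1)P\bigr)\cap\ZZ^N$ for all $k\geq0$ is equivalent to $P^*$ having integral vertices (write $P=\{x\mid\langle a_i,x\rangle\leq 1\}$ with primitive normals and test the inequalities on lattice points). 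With that substitution --- which requires no integrality of $\Delta(\lambda)$ and is in the spirit of what \cite{S2} actually does --- your proof goes through.
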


Using this observation we can give a very precise criterion on the reflexivity of string polytopes.

\begin{theorem*}[\cref{cor:reflexivestring}]
	Let $G$ be a complex classical group and $\lambda$ a dominant integral weight. The standard string polytope $\Q_{\underline{w_0}^\mathrm{std}}(\lambda)$ (in the sense of \cite{L}) is a reflexive polytope after translation by a lattice vector if and only if $\lambda$ is the weight of the anticanonical line bundle over some partial flag variety $G/P$.
\end{theorem*}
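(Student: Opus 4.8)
The plan is to derive \cref{cor:reflexivestring} from \cref{thm:main} together with the duality statement \cref{thm:dual}, after recognising the standard string polytope as a Newton--Okounkov body. Fix $\lambda\in\Lambda^+$ and let $P_\lambda$ be the parabolic subgroup determined by $\{\,i\mid\langle\lambda,\alpha_i^\vee\rangle=0\,\}$, so that $\lambda$ is $P_\lambda$-regular and $\L_\lambda$ lives on the partial flag variety $G/P_\lambda$. For the full-rank valuation $v$ attached to $\underline{w_0}^\mathrm{std}$ one has $\Delta(\lambda)=\Q_{\underline{w_0}^\mathrm{std}}(\lambda)$; and since the string inequalities are homogeneous of degree one jointly in the coordinates and in $\lambda$, the polytope scales: $\Q_{\underline{w_0}^\mathrm{std}}(k\lambda)=k\,\Q_{\underline{w_0}^\mathrm{std}}(\lambda)$. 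Consequently the associated valuation semigroup $\Gamma(\lambda)$ is exactly the set of lattice points of the cone over $\{1\}\times\Q_{\underline{w_0}^\mathrm{std}}(\lambda)$, hence finitely generated (Gordan) and saturated, so the hypotheses of \cref{thm:dual} are met for every $\lambda$. I would record these facts (available from the string-polytope literature and from \cite{S2}) at the start.

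For the implication ``$\Leftarrow$'', assume $\lambda$ is the weight of the anticanonical line bundle $-K_{G/P}$ over some partial flag variety $G/P$. Then $-K_{G/P}$ is ample, so $\lambda$ is $P$-regular, which forces $P=P_\lambda$; being canonically defined, $-K_{G/P}$ is $G$-linearised, so by Borel--Weil $H^0(G/P,\L_\lambda)$ is a genuine $G$-representation and $V(\lambda)$ integrates to $G$. By \cref{thm:main} the polytope $\Q_{\underline{w_0}^\mathrm{std}}(\lambda)$ is then a lattice polytope. Since condition (i) of \cref{thm:dual} holds, condition (ii) together with the ``furthermore'' part yields that $\Delta(\lambda)=\Q_{\underline{w_0}^\mathrm{std}}(\lambda)$ has a unique interior lattice point $p_\lambda$ and that the polar dual of $\Q_{\underline{w_0}^\mathrm{std}}(\lambda)-p_\lambda$ is again a lattice polytope. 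A lattice polytope with a single interior lattice point whose polar dual about that point is a lattice polytope is reflexive once that point is moved to the origin, and since $p_\lambda$ is a lattice point this move is a lattice translation.

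For ``$\Rightarrow$'', I would reverse the argument. If $\Q_{\underline{w_0}^\mathrm{std}}(\lambda)$ is reflexive after translation by a lattice vector, then in particular it is a lattice polytope (so \cref{thm:main} gives that $V(\lambda)$ integrates to $G$) and it has exactly one interior lattice point. Applying \cref{thm:dual} to $G/P_\lambda$ and $\L_\lambda$, the uniqueness of the interior lattice point of $\Delta(\lambda)$ is condition (ii), which forces condition (i): $\L_\lambda=-K_{G/P_\lambda}$. Hence $\lambda$ is the weight of the anticanonical line bundle over the partial flag variety $G/P_\lambda$, as claimed.

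The logical skeleton is short; the real work is the bookkeeping that makes \cref{thm:dual} applicable. The step I expect to be the main obstacle is verifying uniformly (for all classical $G$ and all $\lambda\in\Lambda^+$) that the standard string polytope is the Newton--Okounkov body of $\L_\lambda$ on $G/P_\lambda$ for a full-rank valuation and that the resulting valuation semigroup is finitely generated and saturated, so that no extra hypothesis enters \cref{thm:dual}. Granting this, \cref{thm:main} supplies the integrality of $\Q_{\underline{w_0}^\mathrm{std}}(\lambda)$ and \cref{thm:dual} supplies the integrality of its polar dual, and these two facts together are precisely the definition of a reflexive polytope.
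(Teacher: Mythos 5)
Your proposal follows essentially the same route as the paper: use \cref{thm:main} to conclude the anticanonical string polytope is a lattice polytope, then invoke \cref{thm:dual} (via the realization of string polytopes as Newton--Okounkov bodies) to get the equivalence with reflexivity in both directions. The only differences are cosmetic: you argue that $V(\lambda_{G/P})$ integrates to $G$ via abstract $G$-linearization of the anticanonical bundle, whereas the paper exhibits $\L_{\lambda_{G/P}}=\bigwedge^{\dim G/P}(\g/\mathfrak{p})^*$ explicitly; and you sketch finite generation/saturation of the value semigroup through a scaling-plus-Gordan argument, whereas the paper simply cites Kaveh's theorem.
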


\begin{proof}
	Let us denote the weight of the anticanonical line bundle over $G/P$ by $\lambda_{G/P}$.
	
	Notice that this anticanonical bundle can be realized as the highest wedge power of the cotangent bundle over $G/P$, i.e.
	\[ \L_{\lambda_{G/P}} =  \bigwedge^{\dim G/P} (\g/\mathfrak{p})^*.\]
	From this it is clear that $V(\lambda_{G/P})^* \simeq H^0(G/P, \L_{\lambda_{G/P}})$ carries the structure of a $G$-representation. So by \cref{thm:main} we know that $\Q_{\underline{w_0}^\mathrm{std}}(\lambda_{G/P})$ must be a lattice polytope.
	
	The claim now follows directly from \cref{thm:dual} since Kaveh showed in \cite[Theorem 1]{Ka} that every string polytope can be realized as a Newton-Okounkov for a valuation with sufficient properties.
\end{proof}

These results yield the following geometric interpretation.

\begin{theorem*}[\cref{thm:flag}]
	Let $G$ be a complex classical group. Then every partial flag variety admits a flat projective degeneration to a toric Gorenstein Fano variety.
\end{theorem*}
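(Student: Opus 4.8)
The plan is to package a partial flag variety $G/P$ together with its anticanonical polarization and then run it through \cref{thm:main}, the toric degeneration construction of Alexeev and Brion, and \cref{cor:reflexivestring}. Concretely, I would let $\lambda_{G/P}$ denote the weight of the anticanonical line bundle $\L_{\lambda_{G/P}} = \bigwedge^{\dim G/P}(\g/\mathfrak{p})^*$, exactly as in the proof of \cref{cor:reflexivestring}. This bundle is very ample, so it realizes $G/P$ as the anticanonically polarized Fano variety inside $\mathbb{P}(V(\lambda_{G/P})^*)$, and the whole task becomes producing a flat projective degeneration of this \emph{polarized} variety to a polarized toric variety whose polytope is reflexive.

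First I would invoke \cref{thm:main}: since $V(\lambda_{G/P})^* \simeq H^0(G/P, \L_{\lambda_{G/P}})$ carries a $G$-module structure, the standard string polytope $\Q_{\underline{w_0}^{\mathrm{std}}}(\lambda_{G/P})$ is a lattice polytope. Together with the finite generation and saturation of the associated string (valuation) semigroup established in \cite{S2} (the hypothesis of \cref{thm:dual}), this semigroup is the full lattice-point semigroup of the cone over $\Q_{\underline{w_0}^{\mathrm{std}}}(\lambda_{G/P})$, so the toric variety attached to the associated graded algebra is the normal projective toric variety $X$ of this polytope with its natural ample polarization. Next I would apply the construction of Alexeev and Brion in \cite{AB} — equivalently the Newton--Okounkov/SAGBI degeneration formalism, using Kaveh's realization \cite{Ka} of every string polytope as a Newton--Okounkov body, already cited in the proof of \cref{cor:reflexivestring} — to obtain a flat family over $\mathbb{A}^1$ whose general fiber is $(G/P,\L_{\lambda_{G/P}})$ and whose special fiber is the polarized toric variety $X$. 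The family is projective because it can be cut out inside $\mathbb{P}(V(\lambda_{G/P})^*) \times \mathbb{A}^1$.

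Finally, \cref{cor:reflexivestring} says that $\Q_{\underline{w_0}^{\mathrm{std}}}(\lambda_{G/P})$ is, after translation by a lattice vector, a reflexive polytope, so by Batyrev's dictionary \cite[Proposition 2.2.23]{B} the polarization on $X$ is anticanonical and $X$ is a Gorenstein Fano toric variety; hence the special fiber of the degeneration is a toric Gorenstein Fano variety, as required. The main obstacle I anticipate is bookkeeping of exactly this kind: one must make sure the degeneration lands on the \emph{normal} projective toric variety of the string polytope rather than on a non-normal toric variety — this is precisely where integrality from \cref{thm:main} plus the finite generation and saturation from \cite{S2} enter — and one must carry the polarization along the family so that ``Gorenstein Fano'' is a genuine statement about the polarized special fiber. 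Once integrality, saturation, and reflexivity are in place, the toric–polytope dictionary finishes the argument with no further computation.
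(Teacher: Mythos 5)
Your proposal is correct and follows essentially the same route as the paper: choose $\lambda_{G/P}$ to be the anticanonical weight, apply the Alexeev--Brion degeneration \cite[Theorem 3.2]{AB} to the standard string polytope, use \cref{cor:reflexivestring} to get reflexivity, and conclude via Batyrev \cite[Proposition 2.2.23]{B}. The extra bookkeeping you add about normality, saturation and carrying the polarization is sensible but not part of the paper's (shorter) argument, which simply cites these results directly.
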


\begin{proof} 
	By a result of Alexeev and Brion (see \cite[Theorem 3.2]{AB}) we know that each flag variety $G/P$ admits a flat projective degeneration to the toric variety associated to the string polytope $\Q_{\underline{w_0}}(\lambda)$ for any reduced decomposition $\underline{w_0}$ and any dominant integral weight $\lambda$. Choosing $\underline{w_0}$ as the standard reduced decomposition and choosing $\lambda$ as the weight of the anticanonical line bundle over $G/P$, we see that the resulting string polytope will be reflexive by \cref{cor:reflexivestring}. Hence the limit toric variety will be Gorenstein Fano by Batyrev's result \cite[Proposition 2.2.23]{B}.
\end{proof}

We will finally remark that using our diagrammatic approach, it is possible to find the unique interior lattice point from \cref{thm:dual}. Basically, we just have to find the Gelfand-Tsetlin pattern with the least amount of {\em black} double arrows possible in its identity diagram. We will show this idea in the following examples.

\begin{ex}
	Let us first consider the case $G=\SL_3$ and the full flag variety $G/B$. Then the anticanonical weight is given by $\lambda_{G/B} = 2\rho$. From \cref{cor:reflexivestring} we know that the standard string polytope $\Q_{\underline{w_0}^\mathrm{std}}(2\rho)$ will be translated to a reflexive polytope. Hence it contains a unique lattice point in its interior. We can find this lattice point by finding the unique Gelfand-Tsetlin pattern of type $2\rho$ such that every entry is neither equal to its left upper neighbor nor its right upper neighbor. This pattern is
	\begin{center}\begin{tikzpicture}
		\matrix (m) [matrix of nodes,align=center,row sep={4ex,between origins},column sep={2.5em,between origins},minimum width=2em]
		{$4$&&$2$&&$0$\\
			&$3$&&$1$&\\
			&&$2$&&\\
		};
		\end{tikzpicture}\end{center}
	and its preimage under Littelmann's bijection $\phi_{2\rho}$ is the point $(1,2,1)$.
\end{ex}

\begin{ex}
	For $G=\SO_5$ and the partial flag variety $G/P(\alpha_1)$, the anticanonical weight is given by $\lambda_{G/P(\alpha_1)}=4\omega_2 = 2\epsilon_1+2\epsilon_2$. Since the standard string polytope $\Q_{\underline{w_0}^\mathrm{std}}(4\omega_2)$ is not full-dimensional, we will only find points in its relative interior. We can also see this diagrammatically. Every identity diagram in this setting will have three connected nodes in the upper left corner since $2=\lambda_1\geq z_{1,1} \geq \lambda_2 = 2$. But if we re-embed the string polytope such that it is indeed full-dimensional, it will contain a unique interior lattice point. This point is associated to the pattern
	\begin{center}\begin{tikzpicture}
		\matrix (m) [matrix of nodes,align=center,row sep={4ex,between origins},column sep={2.5em,between origins},minimum width=2em]
		{$2$&&$2$&&$0$\\
			&$2$&&$\frac{1}{2}$&\\
			&&$1$&&$0$\\
			&&&$\frac{1}{2}$&\\
			&&&&$0$\\
		};
		\end{tikzpicture}\end{center}	
	whose preimage under Littelmann's bijection $\phi_{4\omega_2}$ is the point $(1,2,3,0)$.
\end{ex}

\begin{rem}
	For the full flag variety, let $p_{\mathsf{X}_n}$ denote the unique interior lattice point of the anticanonical standard string polytope $\Q_{\underline{w_0}^\mathrm{std}}(\lambda_{G/B})$. Fujita and Higashitani have calculated $p_{\An}$ and $p_{\Bn}$, as well as $p_{\EE}$, $p_{\EEE}$ and $_{\EEEE}$ in \cite[Example 4.6]{FH}. This list can be completed as follows.
	\begin{align*}
		p_{\An} =\,\, &(1, 2,1, 3,2,1, \ldots, n, n-1, \ldots, 2,1)\\
		p_{\Bn} =\,\, &(1, 2,3,1, 4,3,5,2,1, \ldots, 2n-2, 2n-3, \ldots, n, 2n-1, n-1, \ldots, 2,1)\\
		p_{\Cn} =\,\, &(1, 3,2,1, 5,4,3,2,1, \ldots, 2n-1, 2n-2, \ldots, n+1, n, n-1, \ldots, 2,1)\\
		p_{\Dn} =\,\, &(1,1, 3,2,2,1, 5,4,3,3,2,1, \ldots,\\&\hspace{20pt} 2n-3, 2n-4, \ldots, n, n-1, n-1, n-2, \ldots, 2,1)\\
		p_{\EE} =\,\, &(p_{\mathsf{D}_5}, 11,10,9,8,8,7,7,6,6,5,4,5,4,3,2,1)\\
		p_{\EEE} =\,\, &(p_{\mathsf{E}_6}, 17,16,15,14,13,13, 12,12,11,11,\\&\hspace{20pt}10,9,10,9,8,7,6,9, 8,7,6,5,5,4, 3,2,1)\\
		p_{\EEEE} =\,\, &(p_{\mathsf{E}_7}, 28,27,26,25,24,23,23,22,22,21,21,20,19,20,19,\\&\hspace{20pt}18,17,16,19,18,17,16,15,15,14,13,12,11,29,18,17,16,\\&\hspace{20pt}15,14,14,13,13,12,12,11,10,11,10,9,8,7,6,6,5,4,3,2,1),\\
		p_{\FF} =\,\,& (1, 2,3,1, 4,3,5,2,1, 10,9,8,7,7,6, 5,11,6,5,4,4, 3,2,1 )\\
		p_{\GG} =\,\,& (1,2,5,3,4,1)\hspace{10pt} \text{ and } \hspace{10pt} p_{\GG}' := (1,4,3,5,2,1).
	\end{align*}
	Here $p_{\GG}$ corresponds to $\underline{w_0}=s_1s_2s_1s_2s_1s_2$  ($\alpha_1$ being the short root), whereas $p_{\GG}'$ corresponds to the other reduced decomposition.
\end{rem}

\end{document}